\newtheorem{theorem}{Theorem}[section]
\newtheorem{corollary}[theorem]{Corollary}
\newtheorem{lemma}[theorem]{Lemma}
\theoremstyle{definition}
\newtheorem{definition}[theorem]{Definition}
\newtheorem{remark}[theorem]{Remark}
\newcommand{\NN}{\mathbb{N}}
\newcommand{\ZZ}{\mathbb{Z}}
\newcommand{\FF}{\mathbb{F}}
\newcommand{\Fq}{\mathbb{F}_q}
\newcommand{\Fqn}{\mathbb{F}_{q^n}}
\newcommand{\Fqtm}{\mathbb{F}_{q^{2m}}}
\newcommand{\Fqt}{\mathbb{F}_{q^2}}
\newcommand{\D}{\mathcal D}
\newcommand{\cI}{\mathcal I}
\newcommand{\cP}{\mathcal P}
\newcommand{\cL}{\mathcal L}
\def\S{\mathcal{S}}
\def\F{\mathbb{F}}
\def\Fq{{\mathbb{F}}_q}
\def\Aut{\mathrm{Aut}}
\def\PG{\mathrm{PG}}
\def\PGL{\mathrm{PGL}}
\def\GL{\mathrm{GL}}
\def\PGammaL{\mathrm{P\Gamma L}}
\def\AGammaL{\mathrm{A\Gamma L}}
\def\GammaL{\mathrm{\Gamma L}}
\newcommand{\npmatrix}[1]{\left( \begin{matrix} #1 \end{matrix} \right)}
\begin{document}
\title{Cyclic $2$-Spreads in $V(6,q)$ and Flag-Transitive Affine Linear Spaces}
\date{}
\author[$\ddagger$*]{Cian Jameson}
\author[$\ddagger$**]{John Sheekey}
\affil[$\ddagger$]{School of Mathematics and Statistics, University College Dublin, Belfield, Dublin 4, Ireland}
\affil[*]{\tt cian.jameson@ucdconnect.ie}
\affil[**]{\tt john.sheekey@ucd.ie}

\maketitle

\begin{abstract}
In this paper we completely classify spreads of $2$-dimensional subspaces of a $6$-dimensional vector space over a finite field of characteristic not two or three upon which a cyclic group acts transitively. This addresses one of the remaining open cases in the classification of flag-transitive linear spaces. We utilise the polynomial approach innovated by Pauley and Bamberg to obtain our results.   
\end{abstract}

\section{Introduction}

In this paper we aim to construct and classify {\it spreads} of a vector space upon which a cyclic group of automorphisms acts transitively, This corresponds to a classification of certain {\it flag-transitive linear spaces} with a prescribed automorphism group. The problem of classifying flag-transitive linear spaces has a long history, with a series of celebrated results culminating in \cite{BDDKLS} which classified most cases, leaving open the case of linear spaces arising from $t$-spreads of $V(tm,q)$ upon which a subgroup of $\GammaL(1,q^{tm})$ acts transitively. 

However this remaining open case remains a very difficult problem. In \cite{BP}, Bamberg and Pauley used a polynomial approach to give a new means of attacking this problem in the specific case of a cyclic group acting transitively on a $2$-spread in $V(2m,q)$, including constructing new examples. Recently in \cite{FL}, Feng and Lu used this approach and some results from permutation polynomials in order to find further examples.

In this paper we completely solve the case of $2$-spreads in a $6$-dimensional vector space over any finite field of characteristic not two or three. In particular we construct all possible examples, count the number of equivalence classes, and give canonical representatives for each equivalence class.

\section{Definitions and background}

Throughout the paper we let $q$ be a power of a prime $p>3$,  $\Fq$ the field with $q$ elements, and $\overline{\Fq}$ its algebraic closure. We denote by $V(n,q)$ a vector space of dimension $n$ over $\Fq$. We will use $\langle \rangle$ to denote the $\Fq$-span of a set or list of vectors or elements of an extension field of $\Fq$. 

\subsection{Spreads}

A {\it $t$-spread} in a vector space $V=V(n,q)$ is a set $\S$ of $t$-dimensional subspaces such that every nonzero vector of $V$ is contained in precisely one element of $\S$. A well-known result of Segre \cite{segre} tells us that a $t$-spread exists in $\Fq^n$ if and only if $n=tm$ for some positive integer $m$. The ``only if'' part of this statement follows by counting, while the ``if'' part follows from the so-called {\it Desarguesian spread}; if we identify $\FF_{q^{tm}}$ and $V(tm,q)$ as $\Fq$-vector spaces, then the set
\[
\D = \{\langle ax:x\in \FF_{q^t}\rangle:a\in \FF_{q^{tm}}^\times\}
\]
is a Desarguesian spread. 

We say that two $t$-spreads $\S_1$ and $\S_2$ are \textit{equivalent} (resp. \textit{projectively equivalent}) if there is an element of $\GammaL(n,q)$ (resp. $\GL(n,q)$) mapping $S_1$ to $S_2$. The {\it automorphism group} of a spread $\S$ is defined as the setwise stabiliser of $\S$ in $\GammaL(tm,q)$, and is denoted by $\Aut(\S)$. It is well known that the automorphism group of the Desarguesian spread is isomorphic to $\GammaL(m,q^t)$. Furthermore this group acts transitively on $\D$; in fact, it acts transitively on any set of $m+1$ elements of $\D$ in general position, where {\it general position} means that any $m$ elements of the set span all of $V$.

Note that we could equally work in the projective space $\PG(V)=\PG(tm-1,q)$. In this case for the above we would speak of a $(t-1)$-spread in an $(tm-1)$-dimensional projective space, and consider automorphisms of the spread as elements of $\PGammaL(tm,q)\simeq \GammaL(tm,q)/\Fq^\times$. As there is no consensus in the literature regarding whether to use a vector space or projective space setting, we choose to work with the former for convenience but may borrow terminology from the latter. In particular, we will consider $2$-spreads in $V(2m,q)$, but refer to them as {\it line spreads} when convenient.

\subsection{Linear spaces}
A {\it linear space} is a point-line incidence geometry $\cI$ in which
\begin{itemize}
\item[(i)] every pair of points is contained in precisely one common line;
\item[(ii)] every pair of lines meet in at most one common point. 
\end{itemize}
If every pair of lines meet in precisely one common point, it is called a {\it projective plane}. If for any line $\ell$ and any point $p$ not contained in  $\ell$ there exists a unique line containing $p$ and disjoint from $\ell$, it is called an {\it affine space}. 

A {\it flag} of a point-line incidence geometry is a pair $(p,\ell)\in \cP\times \cL$ such that $p\in \ell$. If a point $p$ is not contained in a line $\ell$ then $(p,\ell)$ is called an {\it anti-flag}.

Let $\cP$ and $\cL$ denote the set of points and lines of $\cI$ respectively. A bijective map $\phi$ from $\cP$ to itself is said to be an {\it automorphism} of $\cI$ if the image of the set of points on any line is again the set of points of a line. We denote the group consisting of all automorphisms of $\cI$ as $\Aut(\cI)$ and refer to it as {\it the (full) automorphism group} of $\cI$. We refer to any subgroup of $\Aut(\cI)$ as {\it a group of automorphism of $\cI$}.

We say that a linear space $\cI$ is {\it point-transitive} resp. {\it line-transitive} resp. {\it flag-transitive} if it possesses a group of automorphisms acting transitively on points resp. lines resp. flags. Much work has been done on classifying linear spaces with certain transitivity properties. We refer to \cite{BDDKLS} for an overview, and summarise the results relevant to this paper in the next section. 

\subsection{Linear spaces from spreads}
From a spread $\S$ of a vector space $V$ we can define a point-line incidence structure $\cI(\S)$ whose points are the elements of $V$ and whose lines are cosets of elements of $\S$; that is, cosets $u+U$ for $u\in V$ and $U\in \S$. It is straightforward to verify that $\cI(\S)$ satisfies the axioms of a linear space \cite{BarCof}; indeed, it has the further property of possessing {\it parallelism}. Such spaces are sometimes referred to as {\it translation Sperner spaces}. The lines through the point $u\in V$ are those of the form $u+U$ for $U\in \S$, and any vector $v\ne u$ is contained in $u+U$ if and only if $u-v\in U$. Since $\S$ is a spread, there is a unique spread element $U$ containing $u-v$. 

It is known that the automorphism group of the linear space $\cI(\S)$ is equal to $T.\Aut(\S)$, where $T$ denotes the group of {\it translations} (maps of the form $t_u:v\mapsto v+u$ for $u\in V$). The subgroup $T$ clearly acts transitively on points of $\cI(\S)$. Then any subgroup of automorphisms which acts transitively on flags of $\cI(\S)$ must be of the form $T.G$, where $G$ is a subgroup of $\Aut(\S)$ acting transitively on $\S$. Note that $\Aut(\S)$ acts transitively on $\S$ if and only if $\overline{\Aut}(\S)$ acts transitively on the induced spread of the projective space, and so for the purposes of studying flag-transitivity, it does not matter whether we consider spreads of a vector space or of the corresponding projective space.

In a series of seminal papers \cite{HigmanMcLaughlin, Buekenhoutetal,Liebeck,Saxl}, most cases were completely classified. 

\begin{theorem}
In order to classify all linear spaces with a flag-transitive automorphism group $H$, it remains only to classify the case  $H=TG_0$, where $T\cong(\Fqn,+)$ is a group of translations and $G_0\leq \GammaL(1,q^n)$.
\end{theorem}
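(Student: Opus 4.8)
The statement is in essence a \emph{summary} of the classification of flag-transitive linear spaces achieved across the papers \cite{HigmanMcLaughlin, Buekenhoutetal, Liebeck, Saxl} (together with the completion of the almost simple case), so the plan is to assemble these results rather than to argue from first principles. The first step is to pass from flag-transitivity to point-primitivity: by the theorem of Higman and McLaughlin \cite{HigmanMcLaughlin}, a group $H$ acting flag-transitively on a linear space $\cI$ with more than one line acts primitively on the point set $\cP$. Hence $H$ is a primitive permutation group, and the second step is to invoke the O'Nan--Scott theorem to split the analysis according to the possible type of $H$: affine, almost simple, or one of the diagonal, product-action and twisted-wreath types.

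The diagonal and product-type cases are eliminated next. Flag-transitivity forces strong divisibility relations between $|\cP|$, the common line size, and the order of a point stabiliser; combining these with the known structure of the socles arising in the remaining O'Nan--Scott types shows that none of them can occur, as carried out in \cite{Buekenhoutetal} and its follow-ups. This leaves the affine and almost simple types. The almost simple case is then settled by a long case analysis that proceeds family by family through the finite simple groups (alternating, classical, exceptional, sporadic) and produces an explicit finite list of examples; this is the content of \cite{Buekenhoutetal, Liebeck, Saxl}. I expect this to be the main obstacle: it is exactly the part of the argument that occupied a series of substantial papers and it depends essentially on the classification of finite simple groups.

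It remains to treat the affine case, where $H = TG_0$ with $T$ a regular elementary abelian normal subgroup --- the translation group, which we identify with $(\FF_{q^n},+)$ for $q^n = |T| = p^d$ --- and $G_0 = H_0 \le \GL(d,p)$. Point-primitivity of $H$ is equivalent to $G_0$ acting irreducibly on $T$, and, because $T$ acts regularly on points, flag-transitivity of $H$ is equivalent to $G_0$ acting transitively on the lines through the origin. One now splits into two subcases according to whether or not $G_0$ normalises a Singer cycle of $\GL(d,p)$, equivalently whether $G_0 \le \GammaL(1,q^n)$. If $G_0 \le \GammaL(1,q^n)$ we are precisely in the case left open by the statement. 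Otherwise Aschbacher's description of the subgroup structure of the classical groups, together with order and transitivity bounds that again use the classification of finite simple groups (as in \cite{Liebeck} and companion work), reduce the possibilities for $G_0$ to a finite explicit list, each of which is then analysed directly. Putting the outcomes of all the above steps together, every flag-transitive linear space either occurs on one of the explicit lists so obtained or has automorphism group of the stated form, which is the claimed reduction.
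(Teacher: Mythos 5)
Your proposal is correct and matches the paper's treatment: the paper does not prove this theorem but states it as a summary of the classification obtained in \cite{HigmanMcLaughlin, Buekenhoutetal, Liebeck, Saxl} (via point-primitivity, the O'Nan--Scott reduction, and the almost simple and affine case analyses resting on the classification of finite simple groups), which is exactly the assembly of results you describe.
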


For the remaining case of linear spaces with automorphism group contained in $\AGammaL(1,q^n)$, full classification remains open. Various constructions were provided by Kantor in \cite{Kantor}, leading him to suspect that a full classification may not be feasible. Hence additional restrictions on the linear space and the automorphism group are necessary in order to make headway towards classification; in particular, we seek to classify all $t$-spreads in $V(tm,q)$ possessing a transitive group of automorphisms $G$ contained in $\GammaL(1,q^{tm})$, regarded as a subgroup of $\GammaL(tm,q)$ in the natural way.

In \cite{BP} the authors considered the case of $t=2$ and $G$ a cyclic subgroup of $\GL(1,q^{2m})\simeq \FF_{q^{2m}}^\times$. In this paper we aim to utilise the techniques developed therein in order to further the constructions and classifications in this case, with particular focus on the case $m=3$. In this case the associated linear spaces possess $q^6$ points, with each line containing $q^2$ points.

\subsection{Transitive $2$-spreads}

For the remainder of this paper we will work with $2$-spreads of $V(2m,q)$, which one may also view as a line spread in $\PG(2m-1,q)$. We again identify $V(2m,q)$ with the elements of $\FF_{q^{2m}}$. We consider $2$-spreads  whose automorphism group contains the following group $C\leq \GL(1,q^{2m})\leq \GammaL(1,q^{2m})$:

\[
C:= \left\{x\mapsto cx:c^{\frac{(q-1)(q^{2m}-1)}{(q^2-1)}} = 1\right\}.
\]

Note that elements of $\GammaL(1,q^{2m})$ are of the form $x\mapsto ax^\sigma$ for some $
\sigma\in\Aut(\FF_{q^{2m}})$. Suppose $\S$ is a $2$-spread in $V(2m,q)$ on which the group $C$ acts transitively. Then $\S = \ell^C$ for some two-dimensional subspace $\ell$ of $V(2m,q)$. Since $C$ is normal in $\GammaL(1,q^{2m})$, it follows that for any $\phi\in \GammaL(1,q^{2m})$ we have $\phi(\ell^C)=\phi(\ell)^C$, and so $\ell^C$ and $\phi(\ell)^C$ are equivalent.

It can be shown that $\ell$ can be mapped by an element of $\GammaL(1,q^{2m})$ to a subspace of the form $\ell_\varepsilon$ for some $\varepsilon \in \Fqtm$, where $\ell_\varepsilon = \langle x -\varepsilon x^q :x\in \Fqt\rangle$. Thus it suffices to determine when $\ell_\epsilon^C$ is a $2$-spread. In \cite{BP}, these were characterised as follows.

\begin{theorem}\cite[Theorem 1]{BP}\label{thm:Pauleynomial}
A $2$-spread in $V(2m,q)$ upon which the group $C$ acts transitively is equivalent to one of the form $\ell_\varepsilon^C$, where $\varepsilon$ is an element of $\FF_{q^{2m}}$, and  
\[
\ell_\varepsilon = \langle x -\varepsilon x^q :x\in \Fqt\rangle.
\]
Moreover if $P(x)$ is the minimal polynomial of $\varepsilon$ over $\Fqt$, $\deg(P)=d$ and $\varepsilon^{q+1}\ne 1$, then $\ell_\varepsilon^C$ is a $2$-spread if and only if for all nonzero $x,y\in \Fqt$ it holds that 
\begin{equation}
\left(\frac{x^dP(x^{q-1})}{y^dP(y^{q-1})}\right)^{m/d}\in \Fq \implies \frac{x}{y}\in \Fq.\tag{{\bf Condition (1)}}
\end{equation}
\end{theorem}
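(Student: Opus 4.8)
The plan is to reduce the spread condition for $\ell_\varepsilon^C$ to a statement about when two translates $\ell_\varepsilon$ and $c\cdot \ell_\varepsilon$ meet trivially, and then to rephrase that meeting condition in terms of the polynomial $P$. First I would recall that $\ell_\varepsilon^C$ is a $2$-spread precisely when the $|C|$ distinct translates of $\ell_\varepsilon$ under $C$ partition the nonzero vectors; since each has $q^2-1$ nonzero vectors and $|C| = \frac{(q-1)(q^{2m}-1)}{q^2-1}$, a counting argument shows this is equivalent to the translates being pairwise disjoint, i.e.\ $\ell_\varepsilon \cap c\,\ell_\varepsilon = \{0\}$ for every $c \in C$ with $c\,\ell_\varepsilon \ne \ell_\varepsilon$. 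Equivalently, writing a generic nonzero element of $\ell_\varepsilon$ as $x - \varepsilon x^q$ for $x \in \Fqt^\times$, the condition fails exactly when there exist nonzero $x,y \in \Fqt$ and $c \in C$ with $x - \varepsilon x^q = c(y - \varepsilon y^q)$ but $\frac{x}{y}\notin\Fq$ (the case $\frac{x}{y}\in\Fq$ accounting for the trivial coincidences within a single translate).

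Next I would extract the constraint that membership in $C$ places on the ratio $\frac{x-\varepsilon x^q}{y - \varepsilon y^q}$. An element $c$ lies in $C$ iff $c^N = 1$ where $N = \frac{(q-1)(q^{2m}-1)}{q^2-1}$; by a standard index computation this is equivalent to the norm condition $N_{\Fqtm/\Fqt}(c)^{m} \in \Fq$ — more precisely, $c\in C$ iff $\left(c^{(q^{2m}-1)/(q^2-1)}\right) \in$ (the subgroup corresponding to $\Fq^\times$ inside $\Fqt^\times$), which after taking the appropriate power identifies with the hypothesis ``$(\cdots)^{m/d}\in\Fq$''. So I would compute $c = \frac{x - \varepsilon x^q}{y - \varepsilon y^q}$ and show that the relation $x - \varepsilon x^q = c(y-\varepsilon y^q)$ together with $x,y\in\Fqt$ forces $\varepsilon$ to satisfy a polynomial relation over $\Fqt$ — this is where $P$, the minimal polynomial of $\varepsilon$ over $\Fqt$ of degree $d$, enters. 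Clearing denominators in $x - \varepsilon x^q = \frac{x-\varepsilon x^q}{y-\varepsilon y^q}(y - \varepsilon y^q)$ is a tautology, so instead the real content is: the map $x \mapsto x - \varepsilon x^q$ has a well-defined inverse behavior governed by the factorization of $\varepsilon$, and the quantity $x^d P(x^{q-1})$ arises as (a normalization of) $N_{\Fqtm/\Fqt}(x - \varepsilon x^q)$. Indeed $N_{\Fqtm/\Fqt}(x - \varepsilon x^q) = \prod_{i=0}^{d-1}(x - \varepsilon^{q^{2i}} x^q) = x^d \prod_i (1 - \varepsilon^{q^{2i}} x^{q-1}) = x^d P^{*}(x^{q-1})$ up to the leading coefficient of $P$, where $P^*$ is the reverse polynomial; I would fix the exact normalization so that this equals $x^d P(x^{q-1})$ as written (absorbing constants, which is legitimate since only the ratio matters).

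Assembling these pieces: $c \in C$ and $c\,\ell_\varepsilon$ meeting $\ell_\varepsilon$ nontrivially at $x - \varepsilon x^q$ versus $y - \varepsilon y^q$ translates, via the norm computation of the previous step, into the statement $\left(\frac{x^d P(x^{q-1})}{y^d P(y^{q-1})}\right)^{m/d} \in \Fq$. Hence $\ell_\varepsilon^C$ is a spread iff this membership is only ever witnessed by pairs with $\frac{x}{y}\in\Fq$, which is exactly Condition (1). The role of the hypothesis $\varepsilon^{q+1}\ne 1$ is to guarantee that $\ell_\varepsilon$ is genuinely $2$-dimensional and that $x \mapsto x - \varepsilon x^q$ is injective on $\Fqt$, so that the parametrization by $x$ is faithful and no degenerate coincidences occur; I would verify this at the outset. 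The main obstacle I anticipate is the bookkeeping in the norm computation — correctly tracking the Frobenius orbit $\{\varepsilon^{q^{2i}}: 0 \le i < d\}$ of $\varepsilon$ over $\Fqt$, justifying that its size is exactly $d = \deg P$, pinning down the precise power $m/d$ (rather than $m$) that appears because $\Fqtm$ has degree $m/d$ over $\Fq_{q^{2d}} = \Fqt(\varepsilon)$, and checking that the leading-coefficient normalization of $P$ washes out cleanly. Once that identity $N_{\Fqtm/\Fqt}(x - \varepsilon x^q) = (\text{const})\cdot x^d P(x^{q-1})$ is nailed down, the rest is the counting reduction and the membership-in-$C$ characterization, both of which are routine.
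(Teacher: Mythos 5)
The paper does not prove this statement at all --- it is quoted verbatim from \cite{BP} --- so there is no internal proof to compare with; I can only assess your outline on its own terms. Your strategy is the standard (and surely the intended) one: reduce the spread property to the assertion that for nonzero $u,v\in\ell_\varepsilon$ the ratio $u/v$ lies in $C$ only when $u,v$ are $\Fq$-proportional, observe that $C$ is the subgroup of order $(q^{2m}-1)/(q+1)$, so $c\in C$ if and only if $N_{\Fqtm/\Fqt}(c)\in\Fq$, and then compute the relative norm of a generic element of $\ell_\varepsilon$, which produces the quantity $\bigl(x^dP(x^{q-1})\bigr)^{m/d}$ and hence Condition (1). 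The hypothesis $\varepsilon^{q+1}\ne 1$ indeed serves exactly to make $x\mapsto x-\varepsilon x^q$ injective, as you say.

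Two steps need repair before this is a proof. First, the counting reduction: there are at most $|C|/(q-1)$ distinct translates (scalars stabilise $\ell_\varepsilon$), not $|C|$, and pairwise trivial intersection of the distinct translates is necessary but not sufficient for a spread --- one also needs the stabiliser of $\ell_\varepsilon$ in $C$ to be exactly $\Fq^\times$, equivalently the orbit to have the full size $(q^{2m}-1)/(q^2-1)$, since otherwise the pairwise disjoint translates fail to cover $V$. The criterion you actually carry forward (failure iff there exist nonzero $x,y$ and $c\in C$ with $x-\varepsilon x^q=c(y-\varepsilon y^q)$ and $x/y\notin\Fq$) is the correct one and does force the stabiliser to equal $\Fq^\times$; but it is not ``equivalent to pairwise disjointness'' as you assert, because coincidences with $x/y\notin\Fq$ inside a single translate ($c\ell_\varepsilon=\ell_\varepsilon$ with $c\notin\Fq$) are precisely the covering obstruction, not trivial ones. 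Second, the norm identity is not off by a constant: $\prod_{j=0}^{d-1}(x-\varepsilon^{q^{2j}}x^q)=x^dP^*(x^{q-1})$ with $P^*$ the reverse polynomial of $P$, and $x^dP^*(x^{q-1})$ is in general not a scalar multiple of $x^dP(x^{q-1})$, so ``absorbing constants'' does not work. The correct fix is a reparametrisation: since $x^{q^2}=x$ on $\Fqt$, the elements of $\ell_\varepsilon$ are equally the elements $x^q-\varepsilon x$, and $\prod_{j=0}^{d-1}(x^q-\varepsilon^{q^{2j}}x)=x^dP(x^{q-1})$ exactly because $P$ is monic; hence $N_{\Fqtm/\Fqt}(x^q-\varepsilon x)=\bigl(x^dP(x^{q-1})\bigr)^{m/d}$, and since $x\mapsto x^q$ is a bijection of $\Fqt^{\times}$ preserving $\Fq$-proportionality, Condition (1) follows verbatim. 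With these two patches (and the $m/d$ bookkeeping you already flagged), your outline becomes a complete and correct proof.
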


\begin{theorem}\cite[Proposition 2]{BP}\label{thm:sprequiv}
Two $2$-spreads $\ell_\varepsilon^C$ and $\ell_\zeta^C$ of $V(2m,q)$ are equivalent if and only if
\[
\zeta^\sigma = \frac{v+u^q \varepsilon}{u+v^q \varepsilon}
\]
for some $u,v\in \Fqt$ with $u^{q+1}\ne v^{q+1}$, and some $\sigma \in \Aut(\Fqt:\Fq)$.
\end{theorem}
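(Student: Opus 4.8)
The plan is to prove the ``only if'' by a group-theoretic reduction followed by a short computation, and the ``if'' by running the same computation backwards. For the reduction, suppose $\phi\in\GammaL(2m,q)$ with $\phi(\ell_\varepsilon^C)=\ell_\zeta^C$. Then $\phi C\phi^{-1}$ is a transitive cyclic subgroup of $\Aut(\ell_\zeta^C)$ of the same order as $C$; since $q>3$, the order $|C|=(q^{2m}-1)/(q+1)$ is divisible by a primitive prime divisor of $q^{2m}-1$ (Zsigmondy), so $C$ and $\phi C\phi^{-1}$ each lie in a unique Singer cycle of $\GL(2m,q)$, and moreover $N_{\GammaL(2m,q)}(\Fqtm^{\times})=\GammaL(1,q^{2m})$ while every element of $\GammaL(1,q^{2m})$ already normalises the characteristic subgroup $C$, so $N_{\GammaL(2m,q)}(C)=\GammaL(1,q^{2m})$. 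I would use this to replace $\phi$, after composing with a suitable automorphism of $\ell_\zeta^C$, by an element of $\GammaL(1,q^{2m})$ still mapping $\ell_\varepsilon^C$ to $\ell_\zeta^C$; the special case in which $\ell_\varepsilon^C$ is the Desarguesian spread $\D$ (possible only when $q+1\mid m$), whose automorphism group is the larger $\GammaL(m,q^2)$, has to be treated on its own. This reduction is the step I expect to be hardest: it requires enough control of $\Aut(\ell_\zeta^C)$ to guarantee that two transitive cyclic subgroups of it can be conjugated onto one another.

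\emph{Coordinates and the key lemma.} For $a,b\in\Fqtm$ let $\theta_{a,b}\colon\Fqt\to\Fqtm$ be the $\Fq$-linear map $x\mapsto ax+bx^q$, so that $\ell_\varepsilon=\im\theta_{1,-\varepsilon}$ and $\mu\cdot\im\theta_{a,b}=\im\theta_{\mu a,\mu b}$ for $\mu\in\Fqtm^{\times}$. The lemma I need is: when $\theta_{a,b}$ and $\theta_{a',b'}$ are injective, $\im\theta_{a,b}=\im\theta_{a',b'}$ if and only if $\theta_{a',b'}=\theta_{a,b}\circ\psi$ for some $\Fq$-linear bijection $\psi$ of $\Fqt$. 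Since the $\Fq$-linear bijections of $\Fqt$ are precisely the maps $x\mapsto px+rx^q$ with $p,r\in\Fqt$ and $p^{q+1}\ne r^{q+1}$, and a one-line computation gives $\theta_{a,b}\circ(x\mapsto px+rx^q)=\theta_{\,ap+br^q,\;ar+bp^q}$, this says exactly that $(a',b')=(ap+br^q,\;ar+bp^q)$ for some such $p,r$, equivalently $(a',b')^{\top}=\bigl(\begin{smallmatrix}p & r^q\\ r & p^q\end{smallmatrix}\bigr)(a,b)^{\top}$ --- the ``twisted'' matrix that explains the shape of the statement. The only nontrivial point is the ``only if'', which holds because two injective $\Fq$-linear maps with a common image differ by a linear automorphism of the common domain.

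\emph{Computation and converse.} By the reduction write $\phi\colon x\mapsto\lambda x^{\sigma}$ with $\lambda\in\Fqtm^{\times}$ and $\sigma$ the field automorphism of the statement. Because $C$ is normal in $\GammaL(1,q^{2m})$ and characteristic in $\Fqtm^{\times}$ one checks $\phi(\ell_\varepsilon^C)=(\lambda\,\ell_{\varepsilon^{\sigma}})^C$, so $\phi(\ell_\varepsilon^C)=\ell_\zeta^C$ holds if and only if $\lambda\,\ell_{\varepsilon^{\sigma}}=c\,\ell_\zeta$ for some $c\in C$, i.e.\ $\im\theta_{\lambda,\,-\lambda\varepsilon^{\sigma}}=\im\theta_{c,\,-c\zeta}$. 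Applying the key lemma, equating the two coordinates and dividing eliminates $c$ and $\lambda$ except for the side condition $c=\lambda(p-r^q\varepsilon^{\sigma})\in C$; but $\Fqtm^{\times}/C$ has order $q+1$, so $\lambda$ can always be chosen in the correct coset and the side condition drops out, leaving $\zeta=(p^q\varepsilon^{\sigma}-r)/(p-r^q\varepsilon^{\sigma})$. Applying $\sigma^{-1}$ and putting $u=p^{\sigma^{-1}}$, $v=-r^{\sigma^{-1}}$ (both in $\Fqt$) turns this into $\zeta^{\sigma^{-1}}=(v+u^q\varepsilon)/(u+v^q\varepsilon)$, with $u^{q+1}\ne v^{q+1}$ the image of $p^{q+1}\ne r^{q+1}$ under the norm (which $\sigma$ fixes); relabelling the automorphism gives the asserted equation. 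Reading the chain backwards --- choosing $p,r$ from $u,v$, then $\lambda$ in the appropriate coset and the corresponding $c$ --- proves sufficiency. It then remains only to verify the bookkeeping at the degenerate parameters (those with $\varepsilon^{q+1}=1$, where $\ell_\varepsilon$ drops dimension, and $\ell_\infty=\Fqt$) and to confirm that the ``mod $C$'' freedom does not secretly coarsen the relation, both of which are routine once the lemma and the reduction are in place.
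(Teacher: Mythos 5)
The paper itself offers no proof of this statement---it is imported directly from Proposition 2 of [BP]---so your attempt has to stand on its own. Your skeleton (reduce an arbitrary equivalence to one effected by an element of $N_{\GammaL(2m,q)}(C)=\GammaL(1,q^{2m})$, then turn equality of subspaces into the Dickson-matrix relation via the lemma that two injective $\Fq$-linear maps $\Fqt\to\Fqtm$ with the same image differ by a bijection $x\mapsto px+rx^q$ of $\Fqt$) is the natural one, and both the key lemma and the coordinate computation, hence the ``if'' direction, are fine. The genuine gap is the one you flag yourself: the reduction. Knowing that $C$ and $\phi C\phi^{-1}$ each lie in a unique Singer cycle and that $N_{\GammaL(2m,q)}(C)=\GammaL(1,q^{2m})$ does not yet let you replace $\phi$ by an element of $\GammaL(1,q^{2m})$; you must actually conjugate $\phi C\phi^{-1}$ onto $C$ \emph{inside} the unknown group $\Aut(\ell_\zeta^C)$, for instance by a Sylow argument for a primitive prime divisor $r$ of $q^{2m}-1$ (conjugacy of Sylow $r$-subgroups of $\Aut(\ell_\zeta^C)$, cyclicity of Sylow $r$-subgroups of $\GammaL(2m,q)$, then Schur's lemma to force the conjugated group into the centraliser of $C$ in $\GL(2m,q)$, which is the Singer torus), together with the separate Desarguesian analysis---whose occurrence, incidentally, is governed by $\gcd(m,q+1)$, not by ``$q+1\mid m$''. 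Since this reduction is essentially the whole content of the ``only if'' direction, the proposal as written does not yet prove the theorem.

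A second, unacknowledged gap is the automorphism bookkeeping at the end. Every map $x\mapsto\lambda x^{p^j}$ lies in $\GammaL(2m,q)$ and normalises $C$, so your computation, carried out in full, yields $\zeta^{\tau}=\frac{v+u^q\varepsilon}{u+v^q\varepsilon}$ with $\tau$ ranging over all of $\Aut(\Fqtm)$, a group of order $2m\log_p q$; ``relabelling the automorphism'' does not shrink this to the two-element group $\Aut(\Fqt:\Fq)$ appearing in the statement, and the discrepancy is not cosmetic. For example, $x\mapsto x^{q^2}$ is $\Fq$-linear and maps $\ell_\varepsilon^C$ to $\ell_{\varepsilon^{q^2}}^C$, so these two spreads are always projectively equivalent; yet for a root $\varepsilon$ of $x^{q^2+1}+x+1$ (exactly the situation of the paper's canonical representatives $P_{\delta,1}$) the unique element of $\PGL(2,q^2)$ sending $\varepsilon$ to $\varepsilon^{q^2}$ is $x\mapsto-(x+1)/x$, which is not induced by any $\phi_{u,v}$, so this equivalence is not witnessed with $\sigma=\id$ and has to be recovered, if at all, through the Frobenius part of the relation. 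You therefore need either to prove that all equivalences coming from the maps $x\mapsto x^{p^j}$ are absorbed into the relation as stated, or to prove the version with $\sigma\in\Aut(\Fqtm)$ and then reconcile it with the quoted form; at present the final step silently changes the group over which $\sigma$ ranges.
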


A straightforward simplification of this theorem gives that $\ell_\varepsilon^C$ and $\ell_\zeta^C$ are {\it projectively} equivalent if and only if $
\zeta = \frac{v+u^q \varepsilon}{u+v^q \varepsilon}$
for some $u,v\in \Fqt$ with $u^{q+1}\ne v^{q+1}$; that is, when we require that $\sigma$ is the identity automorphism.

\begin{definition}
For an irreducible polynomial $P(x)$ satisfying Condition (1), we will refer to a $2$-spread $\ell_\varepsilon^C$ defined by a root $\varepsilon$ of $P(x)$ as the {\it $2$-spread defined by $P(x)$}. If $P(x)$ and $Q(x)$ define (projectively) equivalent $2$-spreads then we will say that $P(x)$ and $Q(x)$ are {\it (projectively) equivalent}.
\end{definition}

Given this definition, the following follows immediately from Theorem \ref{thm:sprequiv}.

\begin{corollary}\label{cor:polequiv}
Two irreducible degree $d$ polynomials $P(x)$ and $Q(x)$ satisfying Condition (1) are equivalent if and only if
\[
Q(x) = \lambda (u+v^q x)^d P^\sigma \left( \frac{v+u^q x}{u+v^q x}\right)
\]
for some $\lambda,u,v\in \Fqt$ with $\lambda\ne 0,u^{q+1}\ne v^{q+1}$, and some $\sigma \in \Aut(\Fqt:\Fq)$.
\end{corollary}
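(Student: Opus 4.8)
The plan is to derive the corollary from Theorem \ref{thm:sprequiv}; the only substantive point is translating its condition on the roots $\varepsilon,\zeta$ into the stated condition on the polynomials. First I would make two reductions. (a) The spread ``defined by $P$'' does not depend on the choice of root: applying $x\mapsto x^{q^{2}}\in\GammaL(1,q^{2m})$ to $\ell_{\varepsilon}$ yields $\ell_{\varepsilon^{q^{2}}}$ (using $x^{q^{2}}=x$ and $x^{q^{3}}=x^{q}$ for $x\in\Fqt$), so by the normality of $C$ the spreads $\ell_{\varepsilon}^{C}$ and $\ell_{\varepsilon^{q^{2}}}^{C}$ are equivalent. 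Hence I may fix any root $\varepsilon$ of $P$ and any root $\zeta$ of $Q$ and only need to decide when $\ell_{\varepsilon}^{C}\sim\ell_{\zeta}^{C}$. (b) Since equivalence of spreads is symmetric, Theorem \ref{thm:sprequiv} with $\varepsilon,\zeta$ interchanged says that $\ell_{\varepsilon}^{C}\sim\ell_{\zeta}^{C}$ if and only if $\varepsilon^{\sigma}=g_{u,v}(\zeta)$ for some $u,v\in\Fqt$ with $u^{q+1}\neq v^{q+1}$ and some $\sigma\in\Aut(\Fqt:\Fq)$, where I write $g_{u,v}(x):=\frac{v+u^{q}x}{u+v^{q}x}$. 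So it remains to show that the polynomial condition in the corollary is equivalent to the existence of such $u,v,\sigma$ with $\varepsilon^{\sigma}=g_{u,v}(\zeta)$.

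The bridge is the classical behaviour of a polynomial under a fractional linear substitution: for $R$ of degree $d$ over $\Fqt$ with root multiset $\{\rho_{1},\dots,\rho_{d}\}\subseteq\overline{\Fq}$, one has
\[
(u+v^{q}x)^{d}\,R\big(g_{u,v}(x)\big)=\prod_{i=1}^{d}\big((u^{q}-\rho_{i}v^{q})x+(v-\rho_{i}u)\big),
\]
a polynomial whose roots are the preimages $g_{u,v}^{-1}(\rho_{i})$ and which has degree exactly $d$ unless some $\rho_{i}$ equals $g_{u,v}(\infty)$ (namely $u^{q}v^{-q}$, or $\infty$ when $v=0$). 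I would apply this with $R=P^{\sigma}$, which is again irreducible of degree $d$, separable (finite fields are perfect), with root multiset $\{\varepsilon_{i}^{\sigma}\}$ where $\sigma$ is extended to $\overline{\Fq}$; and I would use that $Q$ is, up to a scalar, the minimal polynomial of $\zeta$ over $\Fqt$.

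For the implication ``root condition $\Rightarrow$ polynomial condition'': given $\varepsilon^{\sigma}=g_{u,v}(\zeta)$, the element $\varepsilon^{\sigma}$ is a root of $P^{\sigma}$, so $\zeta=g_{u,v}^{-1}(\varepsilon^{\sigma})$ is a root of $S(x):=(u+v^{q}x)^{d}P^{\sigma}(g_{u,v}(x))$. Moreover $\deg S=d$: for $d\geq 2$ the roots of $P^{\sigma}$ lie outside $\Fqt$ whereas $g_{u,v}(\infty)\in\Fqt\cup\{\infty\}$, and for $d=1$ the unique root $\varepsilon^{\sigma}=g_{u,v}(\zeta)$ differs from $g_{u,v}(\infty)$ since $g_{u,v}$ is injective and $\zeta\neq\infty$. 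As $Q$ is irreducible of degree $d$ with $Q(\zeta)=0$, it divides the degree-$d$ polynomial $S$, whence $Q=\lambda S$ for a nonzero scalar $\lambda$ --- the asserted identity. For the converse: from $Q(x)=\lambda(u+v^{q}x)^{d}P^{\sigma}(g_{u,v}(x))$ with $\lambda\neq 0$, the right-hand side has degree $d=\deg Q$, so by the substitution identity its roots --- hence those of $Q$ --- are exactly the $g_{u,v}^{-1}(\rho)$ with $\rho$ a root of $P^{\sigma}$. Thus $\zeta=g_{u,v}^{-1}(\varepsilon_{0}^{\sigma})$ for some root $\varepsilon_{0}$ of $P$; choosing $\varepsilon:=\varepsilon_{0}$ (allowed by reduction (a)) gives $\varepsilon^{\sigma}=g_{u,v}(\zeta)$, and Theorem \ref{thm:sprequiv} then yields $\ell_{\varepsilon}^{C}\sim\ell_{\zeta}^{C}$, i.e.\ $P$ and $Q$ are equivalent.

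I expect the main obstacle to be the degree bookkeeping in the substitution identity --- guaranteeing that $S$ genuinely has degree $d$ --- and in particular the degenerate case $d=1$, which must be handled separately using that the relevant root is a finite element of $\Fqt$ rather than $\infty$; the reductions, the substitution identity itself, and the divisibility step are all routine.
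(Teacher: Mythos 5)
Your proposal is correct and follows the same route as the paper: the paper simply asserts that the corollary ``follows immediately'' from Theorem \ref{thm:sprequiv}, and your argument is precisely that derivation, with the routine details (independence of the choice of root via $x\mapsto x^{q^2}$, the fractional-linear substitution identity, the degree bookkeeping including $d=1$, and the divisibility step) written out explicitly.
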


Again the corresponding statement for projective equivalence can be obtained by omiting the automorphsism $\sigma$.

Note that this equivalence corresponds to equivalence under certain {\it linear fractional transformations} (often also called {\it M\"obius transformations}), namely those defined by the group generated by the following subgroup of $\GL(2,q^2)$, and field automorphisms.

\begin{definition}
    We denote by $U$ the subgroup of $\GL(2,q^2)$  defined as
    \[
    U :=\left\{\phi_{u,v} := \npmatrix{u^q&v\\v^q&u}:u,v\in \Fqt,u^{q+1} \ne v^{q+1}\right\}.
    \]
\end{definition}

Note that $U$ is isomorphic to $\GL(2,q)$. In fact, it is equal to the group of invertible {\it autocirculant matrices}, also known as {\it Dickson matrices}, in $\GL(2,q^2)$.

\subsection{Known examples}

We briefly summarise the known examples, with particular regard to the case of cubic polynomials, since these will be the main focus of this paper.

In \cite{BP} it was shown that the polynomial
\[
\mathrm{BP}_p(x) := \frac{x^{p+1}-1}{x-1}-2\in \FF_p[x]
\]
is irreducible and satisfies Condition (1). The only cubic polynomial in this family is the polynomial $x^3+x^2+x-1\in \FF_3[x]$. Since in this paper we consider only fields with characteristic greater than three, this example will not appear.

In \cite{Kantor}, various examples of transitive $2$-spreads were constructed. In \cite{BP}, it was shown that the only ones amongst these which arise from a $2$-spread with a transitive cyclic group of automorphisms are those of {\it Type 4}, which correspond to binomials, namely polynomials of the form 
\[
B_\theta(x) := x^n - \theta,
\]
where $\theta$ is a primitive element of $\Fqt$. We will study the general case of binomials in Section \ref{sec:binomials}. This family contains irreducible cubics satisfying Condition (1) if and only if $q\equiv 1\mod 3$, since no cubic binomial can be irreducible unless $q\equiv 1\mod 3$.

In \cite{FL}, Feng and Lu showed that the polynomials
\[g_{n,\rho}(x) = \frac{(\rho x -1)^n -\rho(x-\rho)^n}{\rho^n -\rho} \in \Fq[x],\]
where $\rho \in \Fqt^*$ has order $q+1$ and $n = d^tu$ for any odd divisor $d>1$ of $q+1$, any proper divisor $u$ of $d$ and any $t \in \NN^+$, have degree $n$, are irreducible in $\Fqt[x]$, and satisfy Condition (1). For the case $n=3$, we must have $d=3$ and $t=u=1$, and so $q \equiv 2 \mod 3$. Hence the cubics in this family are those of the form
\[
g_{3,\rho}(x) = x^3 -3x+(\rho+\rho^q),
\]
where $\rho$ has order $q+1$.

\section{A curve formulation}

We now show an equivalence between Condition (1) and properties of a curve $H_P$ related to $P(x)$. We introduce some notation which will be of use throughout.

\begin{definition}
Given a polynomial $P(x) = \sum_{i=0}^m a_i x^i \in \Fqt[x]$, we define
\begin{align*}
\tilde{P}(x) &:= \sum_{i=0}^m a_{m-i}^q x^i\\
G_P(z,w) &:= P(z)\tilde{P}(w)-\tilde{P}(z)P(w),\\
H_P(z,w) &:= \frac{P(z)\tilde{P}(w)-\tilde{P}(z)P(w)}{z-w}.
\end{align*}
\end{definition}

We will be concerned with zeroes of these polynomials of a certain form. We introduce the following set for convenience:
\[
Z:= \{(z,w)\in\Fqt^2:z^{q+1}=w^{q+1}=1,z\ne w\}.
\]

\begin{lemma}\label{lem:Gpol}
An irreducible polynomial $P(x)\in \Fqt[x]$ of degree $d=m$ satisfies Condition (1) if and only if $G_P$ has no zeroes in $Z$.
\end{lemma}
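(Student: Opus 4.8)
The plan is to unwind Condition~(1) into a statement about the quantity $N(x) := x^d P(x^{q-1})$ and then recognize $G_P$ as essentially $N$ evaluated at pairs of $(q+1)$-th roots of unity. First I would set $d=m$ throughout, so the exponent $m/d$ appearing in Condition~(1) is just $1$, and Condition~(1) simplifies to: for all nonzero $x,y \in \Fqt$, if $N(x)/N(y) \in \Fq$ then $x/y \in \Fq$. The next observation is that $N(x)/N(y) \in \Fq$ is equivalent to $(N(x)/N(y))^{q-1} = 1$, i.e. $N(x)^q N(y) = N(x) N(y)^q$; similarly $x/y \in \Fq$ is equivalent to $(x/y)^{q-1}=1$. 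So everything is controlled by the values of $x^{q-1}$, which range over the set of $(q+1)$-th roots of unity in $\Fqt$ as $x$ ranges over $\Fqt^\times$ (since the norm map $\Fqt^\times \to \Fq^\times$ has kernel exactly the $(q+1)$-th roots of unity, and $x \mapsto x^{q-1}$ has image that kernel). Writing $z = x^{q-1}$, $w = y^{q-1}$, Condition~(1) becomes: for all $z,w$ with $z^{q+1}=w^{q+1}=1$, if $N(x)^q N(y) = N(x) N(y)^q$ then $z = w$.

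The key computation is to relate $N(x)^q$ to $\tilde P$ and the value $z = x^{q-1}$. I expect $N(x)^q = x^{dq} P(x^{(q-1)q})^q$, and using $x^{(q-1)q} = x^{q^2-q} = x^{q-1}\cdot x^{q^2-2q+1}\cdots$ — more cleanly, $x^{q-1} = z$ gives $x^{q^2-q} = z^q = z^{-1}$ (since $z^{q+1}=1$), so $P(x^{(q-1)q})^q = P(z^{-1})^q$. Combining exponents of $x$, I anticipate $N(x)^q = x^{dq} P(z^{-1})^q$ while $N(x) = x^d P(z)$, and the ratio $N(x)^q/N(x) = x^{d(q-1)} \cdot P(z^{-1})^q/P(z) = z^d P(z^{-1})^q / P(z)$. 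Then $z^d P(z^{-1})^q = \tilde P(z)$ by the definition of $\tilde P$ (here one checks $\tilde P(z) = z^d \overline{P(1/z)}$ where the bar is the $q$-power applied to coefficients — this is exactly how $\tilde P$ is defined, with the reversal of coefficient order encoding the $z^d$ and $1/z$). So $N(x)^q/N(x) = \tilde P(z)/P(z)$, and the condition $N(x)^q N(y) = N(x) N(y)^q$ becomes $\tilde P(z)/P(z) = \tilde P(w)/P(w)$, i.e. $P(z)\tilde P(w) - \tilde P(z) P(w) = 0$, i.e. $G_P(z,w) = 0$.

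Putting this together: Condition~(1) says precisely that for all $z,w$ with $z^{q+1}=w^{q+1}=1$ and $G_P(z,w)=0$, one has $z=w$ — equivalently, $G_P$ has no zero $(z,w)$ with $z^{q+1}=w^{q+1}=1$ and $z \neq w$, which is exactly the statement that $G_P$ has no zeroes in $Z$. I would also note the edge cases: we should check $N(x)\neq 0$, i.e. $P(z) \neq 0$ for $z$ a $(q+1)$-th root of unity — this uses that $P$ is irreducible of degree $d \geq 2$ (so it has no roots in $\Fqt$ at all), or if $d=1$ one handles it directly; and the hypothesis $\varepsilon^{q+1}\neq 1$ from Theorem~\ref{thm:Pauleynomial}, which should correspond to $P(z)$ and $\tilde P(z)$ not both vanishing / the transformation being well-defined. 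The main obstacle I foresee is bookkeeping the exponents of $x$ modulo the order relations coming from $z^{q+1}=1$ and getting the coefficient-reversal in $\tilde P$ to match up exactly with $z^d P(z^{-1})^q$; this is purely mechanical but is where a sign or a $q$-twist could easily be misplaced, so I would verify it on a low-degree example (say $d=1$, $P(x) = x - \varepsilon$) before trusting the general identity.
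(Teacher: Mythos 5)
Your argument is correct and follows essentially the same route as the paper: rewrite membership in $\Fq$ as the cross-product identity, substitute $z=x^{q-1}$, $w=y^{q-1}$ (which range over the $(q+1)$-st roots of unity), and identify $z^{m}P(z)^q$ with $\tilde P(z)$ -- your relation $N(x)^q/N(x)=\tilde P(z)/P(z)$ is exactly this identity, and your parenthetical $\tilde P(z)=z^d\,\overline{P(1/z)}$ resolves the $q$-twist bookkeeping correctly. The only caveat is the loose intermediate notation $P(x^{(q-1)q})^q$ versus the coefficient-twisted polynomial, which your final identity fixes, so there is no gap.
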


\begin{proof}
First we note that for any nonzero elements $a,b\in \overline{\Fq}$, we have that $a/b\in \Fq$ if and only if $ab^q-a^qb=0$, if and only if $a^{q-1}=b^{q-1}$. Applying this to the expressions from Theorem \ref{thm:Pauleynomial} we get that 
\[
\frac{x^mP(x^{q-1})}{y^mP(y^{q-1})}\in \Fq \Leftrightarrow x^{mq}P(x^{q-1})^q y^mP(y^{q-1}) = x^{m}P(x^{q-1}) y^{mq}P(y^{q-1})^q 
\]
for all nonzero $x,y\in \Fqt$. Now we define $z=x^{q-1},w=y^{q-1}$, and divide both sides by $(xy)^m$ to get
\[
\frac{x^mP(x^{q-1})}{y^mP(y^{q-1})}\in \Fq \Leftrightarrow z^{m}P(z)^q P(w) = P(z) w^{m}P(w)^q.
\]
Now observe that $z^m P(z)^q=\tilde{P}(z)$ and $w^m P(w)^q=\tilde{P}(w)$. Now $x/y\in \Fq$ if and only if $z=w$, and $z$ is a $(q-1)$-st power of a nonzero element of $\Fqt$ if and only if $z^{q+1}=1$. Thus Theorem \ref{thm:Pauleynomial} is equivalent to the claim. 
\end{proof}

As $G_P(z,w)$ is clearly divisible by $z-w$, and as dividing by $z-w$ does not affect the conditions, the following result in terms of $H_P(z,w)$ follows immediately.

\begin{lemma}\label{lem:Hpol}
An irreducible polynomial $P(x)\in \Fqt[x]$ of degree $d=m$ satisfies Condition (1) if and only if $H_P$ has no zeroes in $Z$.
\end{lemma}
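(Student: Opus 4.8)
The plan is to deduce Lemma~\ref{lem:Hpol} directly from Lemma~\ref{lem:Gpol} by tracking where the factor $z-w$ can and cannot vanish. Recall that $G_P(z,w) = P(z)\tilde P(w) - \tilde P(z) P(w)$ is an antisymmetric polynomial in $z$ and $w$, hence is divisible by $z - w$ in $\Fqt[z,w]$; we define $H_P(z,w) = G_P(z,w)/(z-w)$, which is again a polynomial. So as polynomial identities we have $G_P(z,w) = (z-w) H_P(z,w)$.

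First I would observe that the set $Z = \{(z,w) \in \Fqt^2 : z^{q+1} = w^{q+1} = 1,\ z \ne w\}$ consists precisely of pairs with $z \ne w$. Therefore, for any $(z,w) \in Z$, the factor $z - w$ is a nonzero element of $\Fqt$, and so $G_P(z,w) = 0$ if and only if $H_P(z,w) = 0$. Consequently $G_P$ has a zero in $Z$ if and only if $H_P$ has a zero in $Z$. Combining this equivalence with Lemma~\ref{lem:Gpol}, which states that an irreducible $P(x) \in \Fqt[x]$ of degree $d = m$ satisfies Condition (1) if and only if $G_P$ has no zeroes in $Z$, immediately yields that the same $P$ satisfies Condition (1) if and only if $H_P$ has no zeroes in $Z$. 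This is exactly the claim.

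I do not anticipate a genuine obstacle here: the content of the lemma is entirely captured by the earlier Lemma~\ref{lem:Gpol} together with the trivial remark that excluding the diagonal $z = w$ from the zero set means the division by $z - w$ is harmless. The only point requiring a word of care is the assertion that $z - w$ genuinely divides $G_P$ as a polynomial (so that $H_P$ is a bona fide polynomial and the definition makes sense), but this is immediate from antisymmetry: swapping $z$ and $w$ sends $G_P$ to $-G_P$, forcing $G_P$ to vanish on the diagonal $z = w$ and hence to be divisible by $z - w$. The excerpt already asserts this (``As $G_P(z,w)$ is clearly divisible by $z-w$''), so it can simply be invoked. The proof is therefore a one-line reduction, and the write-up should be correspondingly brief.
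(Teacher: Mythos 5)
Your argument is correct and is essentially the same as the paper's: the paper also deduces Lemma~\ref{lem:Hpol} from Lemma~\ref{lem:Gpol} by noting that $G_P(z,w)=(z-w)H_P(z,w)$ and that dividing by $z-w$ does not affect the condition, since every point of $Z$ has $z\ne w$. Your write-up just makes the same one-line reduction explicit.
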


\subsection{Two connections to permutation polynomials}\label{sec:permutataion}

A polynomial $f(x) \in \Fq[x]$ is called a \textit{permutation polynomial} of $\Fq$ if the map $x \mapsto f(x)$ is a permutation of $\Fq$. In \cite{FL}, the following connection between certain permutation polynomials and polynomials satisfying Condition (1) was shown.

\begin{lemma}\cite{FL}
Suppose $P(x)$ is a polynomial of degree $d$, where $\gcd(d,q-1)=1$. Then $x^dP(x^{q-1})$ is a permutation polynomial of $\Fqt$ if and only if $P(x)$ satisfies Condition (1).
\end{lemma}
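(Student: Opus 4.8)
The plan is to connect the permutation polynomial property of $f(x) := x^d P(x^{q-1})$ to Condition~(1) by analysing the fibres of the associated map on $\Fqt^\times$ and its behaviour at $0$. First I would dispose of the value at $x=0$: since $f(0)=0$, the map $x\mapsto f(x)$ permutes $\Fqt$ if and only if it permutes $\Fqt^\times$, which in turn is equivalent to saying $f$ takes no nonzero value twice on $\Fqt^\times$ and takes no nonzero value at all on $\Fqt^\times$ with value $0$. For the latter, note $f(x)=0$ with $x\ne 0$ would force $P(x^{q-1})=0$; since $\deg P = d$ and we may assume (consistent with the rest of the paper, where $P$ is irreducible of degree $d=m\ge 2$) that $P$ has no roots in $\Fq$ lying among the $(q-1)$-st powers — more carefully, one should observe that a nonzero root of $f$ in $\Fqt^\times$ would contradict irreducibility of $P$ when $d\ge 2$, and handle $d=1$ separately. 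So the crux reduces to: $f$ is injective on $\Fqt^\times$ iff Condition~(1) holds.

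Next I would use the coprimality hypothesis $\gcd(d,q-1)=1$. The key structural observation is that for $x,y\in\Fqt^\times$, writing $z=x^{q-1}$, $w=y^{q-1}$, the quantity $f(x)/f(y) = (x/y)^d \cdot P(z)/P(w)$. Since $\gcd(d,q-1)=1$, the map $x\mapsto x^{q-1}$ from $\Fqt^\times$ to the group $\mu_{q+1}$ of $(q+1)$-st roots of unity is a surjective homomorphism with kernel $\Fq^\times$, and raising to the $d$-th power is a bijection of $\Fqt^\times$; these two facts let me decouple the "radial" part ($x/y\in\Fq^\times$, i.e. $z=w$) from the "angular" part. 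Concretely, $f(x)=f(y)$ for some $x\ne y$ in $\Fqt^\times$ is equivalent to the existence of $z,w\in\mu_{q+1}$ and a scalar relation that, after raising to the $q-1$ power and using $\gcd(d,q-1)=1$, becomes exactly the implication failing in Condition~(1). More precisely: $f(x)=f(y)$ implies $(f(x)/f(y))$ is $1$, so its $(q-1)$-image gives $(x/y)^{d(q-1)} (P(z)/P(w))^{q-1}=1$; but the paper already shows (Lemma~\ref{lem:Gpol}, via the computation $z^mP(z)^q=\tilde P(z)$) that the relevant expression governing Condition~(1) is precisely of this shape. I would then check that the coprimality of $d$ and $q-1$ is exactly what is needed to run this argument in reverse — to lift a violation of Condition~(1) back to a genuine collision $f(x)=f(y)$ with $x\ne y$.

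The main obstacle, and where the argument requires care rather than routine manipulation, is the bookkeeping in this lift: Condition~(1) is phrased as an implication about when $\left(x^dP(x^{q-1})/y^dP(y^{q-1})\right)^{m/d}\in\Fq$, but in this lemma $d$ is a free parameter not assumed equal to $m$, so one must check whether the $m/d$ exponent plays any role here or whether it is implicitly $d=m$ (in which case $m/d=1$ and the condition is just the clean statement $x^dP(x^{q-1})/y^dP(y^{q-1})\in\Fq\implies x/y\in\Fq$, which is literally "$f$ is injective on cosets"); I would adopt the $d=m$ reading, matching Lemma~\ref{lem:Hpol}, and state this explicitly. The remaining subtlety is ensuring that the equivalence $f(x)=f(y)\iff f(x)/f(y)\in\Fq \text{ and } x/y\in\Fq$ is handled correctly: a priori $f$-injectivity only needs $f(x)/f(y)=1$, not $\in\Fq$, but Condition~(1) controls the weaker "$\in\Fq$" statement — so I would argue that once $f(x)/f(y)\in\Fq$ is forced to imply $x/y\in\Fq$, the exponent coprimality $\gcd(d,q-1)=1$ upgrades this to $f(x)/f(y)=1\Rightarrow x=y$, because on $\Fq^\times$ the map $t\mapsto t^d$ is a bijection. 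Assembling these pieces gives the equivalence.
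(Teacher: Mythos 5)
Your plan follows the standard route to this equivalence (the paper itself gives no proof, quoting the statement from \cite{FL}): write $f(x):=x^dP(x^{q-1})$, reduce to injectivity of $f$ on $\Fqt^\times$, and pass between ``$f(x)/f(y)\in\Fq$'' and genuine collisions using $d$-th powers of scalars. Your reading that Condition (1) is to be taken with $d=m$, so the exponent $m/d$ is $1$, is the right one, and your closing argument for ``Condition (1) $\Rightarrow$ injectivity'' (from $f(x)=f(y)$ get $x=cy$ with $c\in\Fq^\times$, then $c^d=1$, then $c=1$) is correct. However, one of the two facts you present as the structural key is false: $\gcd(d,q-1)=1$ does \emph{not} make $x\mapsto x^d$ a bijection of $\Fqt^\times$; that would require $\gcd(d,q^2-1)=1$, and it fails precisely in cases central to this paper, e.g.\ $d=3$ with $q\equiv 2\pmod 3$ (the Feng--Lu cubics), where $3\mid q+1$. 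The only coprimality fact available, and the only one needed, is that $t\mapsto t^d$ is a bijection of $\Fq^\times$. Because of this, the direction ``$f$ a permutation polynomial $\Rightarrow$ Condition (1)'', which you leave at the level of ``run this argument in reverse'', has to be written out with the correct lift: if $f(x)=c\,f(y)$ with $c\in\Fq^\times$ and $x/y\notin\Fq$, choose $e\in\Fq^\times$ with $e^d=c$ and note $f(ey)=e^df(y)=f(x)$ while $ey\ne x$, contradicting injectivity. As it stands, the decoupling step resting on bijectivity of the $d$-th power map on $\Fqt^\times$ is a gap, though a repairable one, since nothing beyond the $\Fq^\times$ statement is actually required.

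The second gap is your treatment of the value $0$: you invoke irreducibility of $P$, which is not a hypothesis of the lemma (the statement concerns an arbitrary polynomial of degree $d$), and it is also unnecessary. In the direction where $f$ is assumed to be a permutation polynomial, $f(y)\ne 0$ for $y\ne 0$ is automatic because $f(0)=0$ and $f$ is injective. In the converse direction, Condition (1) itself forces $P$ to be nonzero on the $(q+1)$-st roots of unity: if $P(x^{q-1})=0$ for some $x\ne 0$, pick $y$ with $x/y\notin\Fq$ and $P(y^{q-1})\ne 0$; then $f(x)/f(y)=0\in\Fq$ while $x/y\notin\Fq$, violating Condition (1). (Such a $y$ exists unless $x^{q+1}-1$ divides $P(x)$, in which case $f\equiv 0$ and, reading the ratio condition as in Lemma~\ref{lem:Gpol}, Condition (1) fails as well.) With this observation the appeal to irreducibility, and the separate $d=1$ case you defer, both disappear, and the assembled argument becomes a complete proof.
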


Note however that this correspondence is only valid when $\gcd(d,q-1)=1$; when $\gcd(d,q-1) > 1$, a polynomial of the form $x^dP(x^{q-1})$ can never be a permutation polynomial, whereas there do exist polynomials satisfying Condition (1) in this case.

In \cite{bartolitimpanella}, permutation polynomials of $\Fqt$ of the form
\[f_{a,b}(X) = X(1 +aX^{q(q-1)} +bX^{2(q-1)}) \in \Fqt[X],\]
where $a,b \in \Fqt^*$, were completely characterized for finite fields with characteristic greater than 3. To attain their results, the authors consider the {\it algebraic plane curve} $\mathcal{C}_{a,b}$ with affine equation
\[F_{a,b}(X,Y) = \frac{(a^qX^3 +X^2 +b^q)(bY^3 +Y +a) -(a^qY^3 +Y^2 +b^q)(bX^3 +X +a)}{X-Y} =0.\]
It was shown that $f_{a,b}$ is a permutation polynomial of $\Fqt$ if and only if there is no point in $Z$ on $\mathcal{C}_{a,b}$. We observe that
\[F_{a,b}(X,Y) = -b^{q+1}H_P(X,Y)\]
where $P(x) = x^3 +b^{-1}x +ab^{-1}$. Hence we have the following.

\begin{lemma}\label{lem:bartoli}
Let $P(x)=x^3 +b^{-1}x +ab^{-1}$ for $a,b\in \Fqt$, $b\ne 0$. Then $f_{a,b}(x)$ is a permutation polynomial of $\Fqt$ if and only if $P(x)$ satisfies Condition (1).
\end{lemma}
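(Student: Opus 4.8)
The plan is to reduce Lemma \ref{lem:bartoli} to the curve criterion already in hand, namely Lemma \ref{lem:Hpol}, by matching the curve $\mathcal{C}_{a,b}$ of Bartoli--Timpanella with the curve $H_P$ attached to the specific cubic $P(x) = x^3 + b^{-1}x + ab^{-1}$. First I would record the cited fact from \cite{bartolitimpanella}: $f_{a,b}$ is a permutation polynomial of $\Fqt$ if and only if $\mathcal{C}_{a,b}$, with affine equation $F_{a,b}(X,Y)=0$, has no point in the set $Z = \{(z,w)\in\Fqt^2 : z^{q+1}=w^{q+1}=1,\ z\neq w\}$. This is the exact same exceptional set $Z$ appearing in Lemma \ref{lem:Hpol}, so the two criteria will coincide once the polynomials are identified up to a nonzero constant.

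The core computation is to verify the claimed identity $F_{a,b}(X,Y) = -b^{q+1} H_P(X,Y)$ for $P(x) = x^3 + b^{-1}x + ab^{-1}$. To do this I would first compute $\tilde{P}(x)$. Writing $P(x) = x^3 + b^{-1}x + ab^{-1}$ has coefficient vector $(a_0,a_1,a_2,a_3) = (ab^{-1}, b^{-1}, 0, 1)$, so by definition $\tilde{P}(x) = a_0^q x^3 + a_1^q x^2 + a_2^q x + a_3^q = a^q b^{-q} x^3 + b^{-q} x^2 + 1$. Thus $b^q \tilde{P}(x) = a^q x^3 + x^2 + b^q$, which is exactly the cubic factor $a^q X^3 + X^2 + b^q$ in the numerator of $F_{a,b}$; and $b\,P(x) = bx^3 + x + a$ is exactly the factor $bX^3 + X + a$. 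Therefore the numerator of $F_{a,b}(X,Y)$ equals
\[
b^q\tilde{P}(X)\cdot bP(Y) - b^q\tilde{P}(Y)\cdot bP(X) = b^{q+1}\bigl(\tilde{P}(X)P(Y) - \tilde{P}(Y)P(X)\bigr) = -b^{q+1}G_P(X,Y),
\]
and dividing by $X-Y$ gives $F_{a,b}(X,Y) = -b^{q+1}H_P(X,Y)$, as claimed. Since $b\neq 0$, the scalar $-b^{q+1}$ is a nonzero element of $\overline{\Fq}$, so $F_{a,b}$ and $H_P$ have exactly the same zero set, in particular the same intersection with $Z$.

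Combining these: $f_{a,b}$ is a permutation polynomial $\iff$ $\mathcal{C}_{a,b}$ has no point in $Z$ (by \cite{bartolitimpanella}) $\iff$ $H_P$ has no zero in $Z$ (by the identity just established) $\iff$ $P(x)$ satisfies Condition (1) (by Lemma \ref{lem:Hpol}, noting $P$ is the given cubic of degree $d=m=3$, which is the relevant case here; one should also remark that irreducibility of $P$ is part of the hypotheses carried through Lemma \ref{lem:Hpol}, or handle the reducible case separately if $f_{a,b}$ being a permutation polynomial already forces the needed structure). I do not expect a genuine obstacle here: the argument is essentially a bookkeeping verification of the polynomial identity together with an appeal to two previously stated results. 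The only point requiring mild care is making sure the degree/irreducibility hypotheses of Lemma \ref{lem:Hpol} are in force for the cubic $P(x) = x^3 + b^{-1}x + ab^{-1}$, and that the set $Z$ used in \cite{bartolitimpanella} is literally the same as the one defined in this paper — both of which I would state explicitly rather than grind through.
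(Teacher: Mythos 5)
Your proposal is correct and takes essentially the same route as the paper, which likewise combines the Bartoli--Timpanella criterion (no point of $\mathcal{C}_{a,b}$ in $Z$) with the identity $F_{a,b}(X,Y)=-b^{q+1}H_P(X,Y)$ and the equivalence between Condition (1) and $H_P$ having no zeroes in $Z$. Your explicit check of the identity via $b^q\tilde{P}(x)=a^qx^3+x^2+b^q$ and $bP(x)=bx^3+x+a$ is exactly the computation the paper leaves implicit, and your caveat about the irreducibility hypothesis carried by Lemma \ref{lem:Hpol} is a reasonable point of care that the paper itself passes over silently.
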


Note however that it is not necessary for $P(x)$ to be irreducible in order for $f_{a,b}(X)$ to be a permutation polynomial, whereas it is required in order for $P(x)$ to define a cyclic spread.

From the results of \cite{bartolitimpanella}, we get full characterisation of cubics satisfying Condition (1) whose coefficient of $x^2$ is zero. However, we can not necessarily assume this, since not every cubic polynomial is equivalent under $U$ to one with this property. Hence this result is not sufficient to characterise all cubics satsifying Condition (1). Furthermore, \cite{bartolitimpanella} does not consider any question of equivalence, and indeed the notion of equivalence of cubic polynomials does not directly correspond to an equivalence amongst permutation polynomials of the form $f_{a.b}(x)$.

\subsection{Determining the reducibility of $H_P$}
In \cite{bartolitimpanella}, the authors show that for $q$ sufficiently large, if the curve $\mathcal{C}_{a,b}$ is absolutely irreducible then it must have points in $Z$. This was achieved by an application of the Aubry-Perret bound \cite{AubryPerret}. We will follow this method to generalise the result to the larger family of curves $\mathcal{H}_P$ with affine equation $H_P(X,Y)=0$ for arbitrary degree.

\begin{lemma}\label{lem:hpredlargeq}
Let $P(x) \in \Fqt[x]$ have degree $m$ and let $q$ be sufficiently large with respect to $m$. If the polynomial $H_P(z,w)$ is absolutely irreducible and not identically zero, then it has zeroes in $Z$ and hence $P$ does not satisfy Condition (1).
\end{lemma}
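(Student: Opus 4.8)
The plan is to count $\Fqt$-rational points on the curve $\mathcal H_P$ and compare this count against the number of points lying in the ``bad'' locus — points with $z=w$, points at infinity, and points with $z^{q+1}\ne 1$ or $w^{q+1}\ne 1$ — showing that when $q$ is large the rational points cannot all avoid $Z$. First I would record that $\deg H_P \le 2m-1$, so the projective closure of $\mathcal H_P$ has degree at most $2m-1$ and (arithmetic) genus bounded by $\binom{2m-2}{2}$, a quantity depending only on $m$. Since $H_P$ is assumed absolutely irreducible over $\Fqt$, the Aubry–Perret bound \cite{AubryPerret} (the singular-curve analogue of Hasse–Weil) applies and gives
\[
\#\mathcal H_P(\Fqt) \ge q^2 + 1 - (2g)q - c
\]
for constants $g,c$ depending only on $m$; in particular $\#\mathcal H_P(\Fqt) \ge q^2 - O_m(q)$.

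Next I would bound the number of $\Fqt$-points of $\mathcal H_P$ that do not lie in $Z$. These fall into a few families: points at infinity (at most $2m-1$ of them, by Bézout with the line at infinity); points on the diagonal $z=w$ — but $\mathcal H_P$ restricted to $z=w$ is a single polynomial of degree at most $2m-1$, so at most $2m-1$ such points, unless the diagonal is a component, which is impossible since $H_P$ is absolutely irreducible and of degree at least $2$ (and one checks $z-w \nmid H_P$ because $G_P$ vanishes on the diagonal only to first order). The remaining bad points have $z^{q+1}\ne 1$ or $w^{q+1}\ne 1$: such a point lies on one of the two curves $\mathcal H_P \cap \{z^{q+1}=1\}$ is what we want, so I instead bound points with $z^{q+1}\ne 1$. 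Here I would use that for each fixed $w$ with $w^{q+1}=1$, the equation $H_P(z,w)=0$ has at most $2m-1$ solutions in $z$; summing over the $q+1$ admissible values of $w$ gives at most $(2m-1)(q+1)$ points with $w^{q+1}=1$ in total, and subtracting those in $Z$ is harmless. The cleanest route is: $\#\big(\mathcal H_P(\Fqt)\setminus Z\big)$ is at most the number of points with $z^{q+1}\ne 1$, plus those with $z^{q+1}=1$ but $w^{q+1}\ne 1$ or $z=w$ or at infinity, and each of these is $O_m(q)$ — the first because fibres over the $\le q+1$ good $w$-values have bounded size while the ``complement'' is handled symmetrically, the others as above. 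Combining, $\#(\mathcal H_P(\Fqt)\cap Z) \ge q^2 - O_m(q) > 0$ once $q$ is large enough relative to $m$, which is the desired contradiction with Condition (1) via Lemma \ref{lem:Hpol}.

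The main obstacle I anticipate is making the ``bad locus'' count genuinely tight and uniform: one must rule out that $\mathcal H_P$ has a component contained in $\{z^{q+1}=1\}$ or in the line at infinity (impossible by absolute irreducibility and the degree bound, but this needs to be stated carefully, including checking $H_P$ is not a constant), and one must argue that the fibres $\{z : H_P(z,w_0)=0\}$ are not all of $\Fqt$ for the relevant $w_0$ — i.e. that $H_P(z,w_0)$ is not the zero polynomial in $z$ for $w_0^{q+1}=1$. This last point follows because $H_P(z,w_0)\equiv 0$ would force $z-w_0 \mid G_P(z,w_0)$ to account for all of $G_P(\cdot,w_0)$, i.e. $P(z)\tilde P(w_0) = \tilde P(z) P(w_0)$ identically, which (since $P$ is irreducible of degree $m\ge 2$, so $P$ and $\tilde P$ are not $\Fqt$-proportional unless... ) one rules out by a short argument on leading coefficients and irreducibility. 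Once these finitely many degeneracies are excluded, the inequality $q^2 - O_m(q) > 0$ does the rest, so the real content is the bookkeeping of the error term, not any deep geometry.
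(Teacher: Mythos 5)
Your proposal fails at its central counting step. The set $Z$ requires \emph{both} $z^{q+1}=1$ and $w^{q+1}=1$, so it contains at most $(q+1)^2$ points of the whole affine plane; consequently the portion of $\mathcal{H}_P(\Fqt)$ lying in $Z$ is at most $(2m-1)(q+1)=O_m(q)$ (for each of the $q+1$ admissible values of $w$, at most $\deg H_P$ values of $z$) --- you even compute this bound yourself in the middle of the argument. Meanwhile the Weil--Aubry--Perret lower bound over $\Fqt$ gives $\#\mathcal{H}_P(\Fqt)\approx q^2$. So the ``bad'' points with $z^{q+1}\ne 1$ or $w^{q+1}\ne 1$ are not $O_m(q)$: they comprise almost the entire curve, roughly $q^2-O_m(q)$ points, and the inequality ``total $\ge q^2-O_m(q)$ minus bad $=O_m(q)$'' is simply false. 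A point count over $\Fqt$ can never force points into the thin set $Z$; your numbers are perfectly consistent with $\mathcal{H}_P(\Fqt)\cap Z=\emptyset$, so no contradiction with Condition (1) via Lemma \ref{lem:Hpol} is obtained.

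The missing idea (which is how the paper, following Bartoli--Timpanella, proceeds) is to convert the norm-one condition into rationality over the \emph{subfield} $\Fq$ before counting. Choosing $e\in\Fqt\setminus\Fq$ with $e^q=-e$, the fractional linear map $x\mapsto\frac{x+e}{x-e}$ sends $\Fq\cup\{\infty\}$ onto the set $\{z\in\Fqt: z^{q+1}=1\}$; substituting this in both variables and clearing denominators yields a polynomial $K_P(X,Y)=(X-e)^{m-1}(Y-e)^{m-1}H_P\bigl(\frac{X+e}{X-e},\frac{Y+e}{Y-e}\bigr)$ which has coefficients in $\Fq$, defines a curve $\Fqt$-isomorphic to $\mathcal{H}_P$, and whose affine $\Fq$-rational points $(x,y)$ with $x\ne y$ correspond exactly to points of $\mathcal{H}_P$ in $Z$. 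One then applies the Aubry--Perret bound to this curve \emph{over $\Fq$}: since its degree $\partial$ and number of ideal points $D$ are bounded by $2(m-1)$, the inequality $q+1-(\partial-1)(\partial-2)\sqrt q-\partial-D>0$ holds once $q$ is large with respect to $m$, producing the required point of $Z$ on $\mathcal{H}_P$. (One must also justify that $K_P$ remains absolutely irreducible, which follows from the isomorphism with $\mathcal{H}_P$.) Your peripheral remarks --- that the diagonal and the line at infinity are not components, and that $H_P(z,w_0)$ is not identically zero --- are fine but do not repair the main gap, which is structural rather than a matter of bookkeeping.
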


\begin{proof}
First let $e \in \Fqt \setminus \Fq$ such that $e^q = -e$, and define two transformations as in \cite{bartolitimpanella} by
\[\psi(X,Y) = \left(\frac{X+e}{X-e},\frac{Y+e}{Y-e}\right)\]
and
\[\phi(X,Y) = \left(e\frac{X+1}{X-1},e\frac{Y+1}{Y-1}\right).\]
Then the curve $\mathcal{H}^*_P$ defined by $K_P(X,Y) = (X-e)^{m-1}(Y-e)^{m-1} H_P(\psi(X,Y))$ and the curve $\mathcal{H}_P$ are $\Fqt$-isomorphic since $(X-1)^{m-1}(Y-1)^{m-1} K_P(\phi(X,Y)) = (2e)^{2(m-1)}H_P(X,Y)$. Note that $K_P(X,Y) \in \Fq[X,Y]$.

Let $\partial$ denote the degree of $K_P(X,Y)$ and $D$ the number of ideal points (i.e. points at infinity) of $\mathcal{H}^*_P$. By the Aubry-Perret bound \cite[Corollary 2.5]{AubryPerret}, the curve has affine $\Fq$-rational points $(x,y)$ with $x \ne y$ provided
\begin{align*}
& q+1 -(\partial-1)(\partial-2)\sqrt{q} -\partial -D >0\\
\iff & q > \frac{\left((\partial-1)(\partial-2)+\sqrt{\partial^4 -6\partial^3 +13\partial^2 -8\partial +4D}\right)^2}{4}. \tag{$\dagger$}
\end{align*}
Since $D \leq \partial \leq 2(m-1)$, $\mathcal{H}^*_P$ will have affine $\Fq$-rational points $(x,y)$ with $x \ne y$ if
\[q > \left((m-2)(2m-3)+\sqrt{(m-1)(4m^3 -24m^2 +49m -31)}\right)^2.\]
Thus for such $q$, there exists a point $\left(\frac{x+e}{x-e},\frac{y+e}{y-e}\right) \in Z$ that lies on $\mathcal{H}_P$. Therefore there are no degree $m$ polynomials $P$ satisfying Condition (1) for which $H_P$ is absolutely irreducible when $q$ satisfies the above inequality.
\end{proof}

Note that while Lemma 3.2 of \cite{AubryPerret} may appear to be more directly relevant to the curves considered here, we use instead Corollary 2.5 due to the fact that we will later have more information on the number $D$, leading to better bounds. 

\subsection{Preliminary restrictions on the factorisation of $H_P$}

Our strategy for the remainder of the paper will be to consider the possible factorisations of $H_P$. We begin by ruling out certain factors.

\begin{lemma}\label{P divides G}
Let $P(x) \in \F_{q^2}[x]$. Then $P(x)$ and $\tilde{P}(x)$ each divide both  $G_P(x^{q^2},x)$ and $H_P(x^{q^2},x)$.
\end{lemma}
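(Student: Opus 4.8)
The plan is to treat the two assertions about $G_P$ and about $H_P$ separately, exploiting in both cases that the coefficients of $P$ --- and hence of $\tilde P$ --- lie in $\Fqt$, so that the $q^2$-power Frobenius acts on the roots of $P$.

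The first step is to record the identity on which everything rests: since $P,\tilde P\in\Fqt[x]$, we have $P(x^{q^2})=P(x)^{q^2}$ and $\tilde P(x^{q^2})=\tilde P(x)^{q^2}$ in $\Fqt[x]$, so substituting $z=x^{q^2}$, $w=x$ into the definition of $G_P$ yields
\[
G_P(x^{q^2},x)=P(x)^{q^2}\,\tilde P(x)-\tilde P(x)^{q^2}\,P(x).
\]
Both summands on the right are divisible by $P(x)$ and both by $\tilde P(x)$, which settles the $G_P$ part at once (and in fact gives $P\tilde P\mid G_P(x^{q^2},x)$ whenever $\gcd(P,\tilde P)=1$, though we do not need this).

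For $H_P$ I would use $G_P(x^{q^2},x)=(x^{q^2}-x)\,H_P(x^{q^2},x)$ together with the divisibility just proved, and then cancel the factor $x^{q^2}-x$. This cancellation is legitimate because $\gcd(P(x),x^{q^2}-x)=1$: recall that $x^{q^2}-x=\prod_{a\in\Fqt}(x-a)$, while $P$, being irreducible of degree $m\ge 2$ as throughout this section, has no root in $\Fqt$ (its roots generate the proper extension $\Fqtm$ of $\Fqt$). Hence $P(x)\mid H_P(x^{q^2},x)$. For $\tilde P$ I would first observe that $\tilde P$ is the reciprocal polynomial of the Frobenius twist $P^{(q)}(x)=\sum_i a_i^q x^i$, so it too is irreducible of degree $m\ge 2$ over $\Fqt$ (forming the reciprocal of a polynomial with nonzero constant term, and applying a field automorphism to the coefficients, both preserve irreducibility), whence $\gcd(\tilde P(x),x^{q^2}-x)=1$ and the same cancellation gives $\tilde P(x)\mid H_P(x^{q^2},x)$. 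Alternatively one can argue directly on roots: for any root $\beta$ of $P$ the conjugate $\beta^{q^2}$ is again a root of $P$ and $\beta^{q^2}\ne\beta$ since $\beta\notin\Fqt$, so in
\[
H_P(\beta^{q^2},\beta)=\frac{P(\beta^{q^2})\tilde P(\beta)-\tilde P(\beta^{q^2})P(\beta)}{\beta^{q^2}-\beta}
\]
the numerator vanishes while the denominator does not; as $P$ is separable this forces $P\mid H_P(x^{q^2},x)$, and the symmetric computation with the roots of $\tilde P$ handles $\tilde P$.

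I do not expect a genuine obstacle here. The only point needing care is the cancellation of $x^{q^2}-x$ in the $H_P$ step --- that is, checking that passing from $G_P(x^{q^2},x)$ to $H_P(x^{q^2},x)$ does not destroy the factors $P$ and $\tilde P$ --- which is exactly the statement that $P$ and $\tilde P$ are coprime to $x^{q^2}-x$, and this is where the standing hypothesis that $P$ is irreducible of degree $m\ge 2$ enters (for the $G_P$ part no hypothesis beyond $P\in\Fqt[x]$ is used). Everything else is a formal manipulation of the Frobenius action.
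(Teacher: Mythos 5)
Your proposal is correct and follows essentially the same route as the paper: the same Frobenius identity $P(x^{q^2})=P(x)^{q^2}$ gives $G_P(x^{q^2},x)=P(x)^{q^2}\tilde P(x)-\tilde P(x)^{q^2}P(x)$, and the $H_P$ claim is obtained by the same cancellation of $x^{q^2}-x$, justified (as in the paper) by the fact that $P$ and $\tilde P$, being irreducible of degree at least two over $\Fqt$, have no roots $\varepsilon$ with $\varepsilon^{q^2}=\varepsilon$. Your explicit remark that this step uses the standing irreducibility hypothesis is a fair observation (the paper uses it implicitly in the same way), but it does not change the argument.
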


\begin{proof}
We directly calculate that
\begin{align*}G_P(x^{q^2},x) &= P(x^{q^2})\tilde{P}(x) -\tilde{P}(x^{q^2})P(x)\\
&=P(x)^{q^2}\tilde{P}(x) -\tilde{P}(x)^{q^2}P(x)\\
&=P(x)\tilde{P}(x)[P(x)^{q^2 -1} -\tilde{P}(x)^{q^2 -1}],
\end{align*}
proving the first claim. 

Now $P(x)$ and $\tilde{P}(x)$ divide $G_P(x^{q^2},x) = (x^{q^2} -x)H_P(x^{q^2},x)$,
but do not divide $x^{q^2} -x$ (as otherwise a root $\varepsilon$ of either polynomial would satisfy $\varepsilon^{q^2} =\varepsilon$), they must divide $H_P(x^{q^2},x)$.
\end{proof}

\begin{lemma}\label{lem:a=b factor}
Let $P(x) \in \Fqt[x]$ be an irreducible polynomial of degree $m$. Then $H_P(z,w)$ cannot factorize as
\[\prod_{i=1}^{2(m-1)} (c_izw +a_i(z+w) +d_i)\]
for any $a_i,c_i,d_i \in \overline\Fq$.
\end{lemma}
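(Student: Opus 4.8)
The plan is to suppose for contradiction that $H_P(z,w)$ factors as a product of $2(m-1)$ factors each linear in $z$ and symmetric under swapping $z$ and $w$, i.e.\ of the form $c_i zw + a_i(z+w) + d_i$. The first observation is that the polynomial $G_P(z,w) = (z-w)H_P(z,w)$ would then have a completely factored form, and by Lemma \ref{P divides G} both $P(x)$ and $\tilde P(x)$ divide $H_P(x^{q^2},x)$. I would substitute $w = x^{q^2}$ (equivalently specialise one variable) into the hypothesised factorisation and track where the irreducible factor $P(x)$ (of degree $m$) can go: since each factor $c_i zw + a_i(z+w) + d_i$ becomes, after substitution, a polynomial of degree at most $1$ in $z$ (when $c_i x^{q^2} + a_i \ne 0$) or a constant, the product has $z$-degree at most $2(m-1)$, while $P(x)$ and $\tilde P(x)$ together contribute degree $2m$ as divisors of $H_P(x^{q^2},x)$ viewed as a polynomial in $x$; a more careful count of degrees in $x$ should already be contradictory, but the cleanest route is the symmetry/irreducibility argument below.

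The key structural point is that the factorisation is \emph{symmetric} in $z$ and $w$, and $H_P$ has a known diagonal behaviour: on the line $z = w$, $H_P(z,z) = P(z)\tilde P{}'(z) - P'(z)\tilde P(z) + (\text{terms})$, or more usefully, $G_P(z,z) = 0$ identically, so each symmetric factor restricted to $z=w$ gives $c_i z^2 + 2a_i z + d_i$, whose roots are exactly the $z$-values where that factor meets the diagonal. The real leverage comes from considering the action of the Frobenius-type map $z \mapsto 1/z^{q}$ (or the involution coming from the $\tilde{\phantom P}$ operation): one checks that $G_P$ and hence $H_P$ is, up to a scalar, invariant under $(z,w) \mapsto (1/z, 1/w)$ suitably twisted, because $\widetilde{\tilde P} = P$ (up to scaling) and $z^m\tilde P(1/z)$ relates back to $P(z)$. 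A factor of the shape $c_i zw + a_i(z+w) + d_i$ is a \emph{conic of rank $\le 2$ in the Segre/autocirculant sense} — in fact it is exactly the vanishing of a Dickson-type $2\times 2$ determinant — and such factors correspond to the reducible locus. The plan is to show that any such factor forces $H_P$ to have a zero in the set $Z = \{(z,w): z^{q+1} = w^{q+1} = 1, z \ne w\}$, OR forces $P$ to be reducible, either of which is a contradiction (the former via Lemma \ref{lem:Hpol}, the latter by hypothesis).

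Concretely, here is the cleaner line of attack I would pursue. Each factor $L_i(z,w) = c_i zw + a_i(z+w)+d_i$ defines a curve that is a \emph{graph of a linear fractional transformation} $w = -\frac{a_i z + d_i}{c_i z + a_i}$ (or a product of two "vertical/horizontal" lines if $c_i d_i = a_i^2$). Since the product of all $L_i$ equals $H_P(z,w) = \frac{P(z)\tilde P(w) - \tilde P(z)P(w)}{z-w}$, fixing a generic $z = z_0$ shows that $w \mapsto P(z_0)\tilde P(w) - \tilde P(z_0)P(w)$ — a polynomial of degree $m$ in $w$ with root $w = z_0$ — has all its $m-1$ remaining roots given by the $2(m-1)$ linear-fractional expressions evaluated at $z_0$; but a degree-$(m-1)$ polynomial has only $m-1$ roots, so the $2(m-1)$ branches must collapse in pairs, i.e.\ the $L_i$ pair up into $m-1$ repeated factors, forcing $H_P = Q(z,w)^2$ for some $Q$ of degree $m-1$ that is itself a product of $m-1$ symmetric linear-in-each-variable factors. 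Then $G_P = (z-w)Q^2$; but $G_P(z,z) = 0$ means $(z-w) \mid G_P$ to exactly the first power generically (one can verify $\partial G_P/\partial w$ at $z=w$ is $P(z)\tilde P{}'(z) - \tilde P(z)P'(z) + \cdots \ne 0$ when $P$ is irreducible and separable — separability holds as $p > 3$ and we can arrange $P$ separable since it is irreducible over a finite field), so $(z-w)^2 \nmid G_P$, contradicting $(z-w)Q^2$ having $(z-w)$ appear with odd multiplicity only if $(z-w) \nmid Q$ — wait, this needs $(z-w)\nmid Q$, which holds because $Q \mid H_P$ and $H_P(z,z) \ne 0$ generically. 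This gives the contradiction.

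\textbf{Main obstacle.} The delicate step is ruling out the "pairing up" cleanly — I must be careful that the $2(m-1)$ factors could a priori be distinct yet still, for \emph{every} $z_0$, have their $w$-values coincide in pairs only because of special coincidences, rather than as polynomial identities. The resolution is that coincidence for all $z_0$ in a Zariski-dense set forces a polynomial identity $L_i(z,w) = \lambda L_j(z,w)$, so genuine pairing does occur; the bookkeeping to make this rigorous (handling the degenerate factors with $c_i = 0$ or $a_i^2 = c_i d_i$, and the leading-coefficient comparisons in $z$ and in $w$ to pin down which factors can pair) is where the real work lies. A secondary subtlety is confirming $(z-w)^2 \nmid G_P$, for which I would compute the Taylor expansion of $G_P(w+t, w)$ in $t$ and show the linear coefficient is $-2\bigl(\tilde P(w)P'(w) - P(w)\tilde P{}'(w)\bigr)$ up to sign, then argue this is not the zero polynomial: if it were, $P/\tilde P$ would be constant, forcing $\tilde P = \mu P$, but then $P(z)\tilde P(w) - \tilde P(z)P(w) \equiv 0$, i.e.\ $H_P \equiv 0$, contradicting irreducibility of $P$ combined with the nonvanishing established earlier (or handled as the excluded case $\varepsilon^{q+1} = 1$).
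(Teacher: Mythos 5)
Your proposed endgame does not produce a contradiction. After the ``pairing up'' step you arrive at $H_P=Q^2$, hence $G_P=(z-w)Q^2$ with $(z-w)\nmid Q$; but this is perfectly consistent with your (correct) observation that $z-w$ divides $G_P$ exactly once, so nothing is contradicted --- the multiplicity of $z-w$ is $1$ in both descriptions. Moreover the pairing argument itself is not forced: the branches coming from the factors $c_izw+a_i(z+w)+d_i$ need not collapse two-by-two (some factors may be constants or degenerate, and coincidences could occur in other multiplicity patterns), so even the reduction to the square case is incomplete. A further sign that the approach is off-track is your fallback plan of producing a zero of $H_P$ in $Z$: that would only contradict Condition (1), which is \emph{not} a hypothesis of this lemma --- the statement assumes only that $P$ is irreducible. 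In fact your argument never uses irreducibility in an essential way (only separability/nonvanishing), whereas the obstruction is precisely that the roots of $P$ generate $\FF_{q^{2m}}$ and lie in no proper subfield.

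The paper's proof goes through the substitution you mention at the start and then abandon. By Lemma \ref{P divides G}, $P(x)$ divides $H_P(x^{q^2},x)$, so by irreducibility $P(x)$ divides the specialisation of a single symmetric factor, $cx^{q^2+1}+a(x^{q^2}+x)+d$. Evaluating this at the conjugate root $\varepsilon^{q^{2(m-1)}}$ and subtracting the relation at $\varepsilon$ gives $(\varepsilon^{q^{2(m-1)}}-\varepsilon^{q^2})(c\varepsilon+a)=0$; since $\varepsilon$ generates $\FF_{q^{2m}}$ the first factor is nonzero, so $a=-c\varepsilon$ and then $d=c\varepsilon^2$. Hence $P(x)$ divides $c(x^{q^2}-\varepsilon)(x-\varepsilon)$, forcing $\varepsilon^{q^2}=\varepsilon$, a contradiction. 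If you want to salvage your outline, this divisibility-at-a-single-factor step (or some other genuine use of the field $\FF_{q^{2m}}$) is the missing ingredient; degree counting in $x$ after the substitution, as you suggest, is also not contradictory on its own, since $H_P(x^{q^2},x)$ has degree roughly $(m-1)q^2$ and easily accommodates the degree-$2m$ divisor $P\tilde P$.
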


\begin{proof}
Suppose that $H_P(z,w)$ factorizes as
\[\prod_{i=1}^{2(m-1)} (c_izw +a_i(z+w) +d_i)\]
for some $a_i,c_i,d_i \in \overline\Fq$ and let $\{\varepsilon^{q^{2i}} : 1 \leq i \leq m\}$ be the roots of $P$. Since $P(x)$ divides $H_P(x^{q^2},x)$, it must divide $cx^{q^2 +1} +a(x^{q^2} +x) +d$ for some $a,c,d \in \overline\Fq$. Thus
\begin{align*}
& c\left(\varepsilon^{q^{2(m-1)}}\right)^{q^2 +1} +a\left(\left(\varepsilon^{q^{2(m-1)}}\right)^{q^2} +\varepsilon^{q^{2(m-1)}}\right) +d = 0\\
\iff & c\left(\varepsilon^{q^{2(m-1)} +1}\right) +a\left(\varepsilon +\varepsilon^{q^{2(m-1)}}\right) +d = 0\\
\iff & c\left(\varepsilon^{q^{2(m-1)} +1}\right) +a\varepsilon^{q^{2(m-1)}} -(c\varepsilon^{q^2 +1} +a\varepsilon^{q^2}) =0\\
\iff & (\varepsilon^{q^{2(m-1)}}-\varepsilon^{q^2})(c\varepsilon +a)=0.
\end{align*}
If $\varepsilon^{q^2} = \varepsilon^{q^{2(m-1)}}$, then $\varepsilon = \varepsilon^{q^{2(m-2)}}$ which cannot occur because the smallest field containing $\varepsilon$ is $\F_{q^{2m}}$, so $a = -c\varepsilon$. Then
\[
c\varepsilon^{q^2 +1} +a(\varepsilon^{q^2} +\varepsilon) +d = 0 \iff d = c\varepsilon^2.
\]
Hence $P(x)$ divides
\[cx^{q^2 +1} -c\varepsilon(x^{q^2} +x) +c\varepsilon^2 = c(x^{q^2} -\varepsilon)(x-\varepsilon).\]
Since $P(x)$ cannot divide the linear factor, it must divide $x^{q^2} -\varepsilon$, which gives $\varepsilon = \varepsilon^{q^2}$. This contradiction means that $H_P(z,w)$ cannot factorize in this way.
\end{proof}

\section{Cubic polynomials}\label{sec:cubic}
We now focus on the case $m=3$, studying irreducible cubics in $\Fqt[x]$ satisfying Condition (1), and hence cyclic $2$-spreads in $V(6,q)$.

When $m=3$, we have that
\begin{align*}
-H_P(z,w) &= (\theta^q\delta+\gamma^q)z^2w^2+(\theta^q\gamma+\delta^q)(z^2w+zw^2)+ (\theta^{q+1}-1)(z^2+zw+w^2)\\
&\,\,\,+(\gamma^{q+1}-\delta^{q+1})zw+(\theta\gamma^q+\delta)(z+w)+(\theta\delta^q+\gamma)
\end{align*}
for $P(x) = x^3 -\delta x^2 -\gamma x -\theta \in \F_{q^2}[x]$. 

\subsection{Proving the reducibility of $H_P$}

In \cite{bartolitimpanella} it was shown via Lemma \ref{lem:bartoli} that when $\delta=0$, $P(x)$ can satisfy Condition (1) only if $H_P(z,w)$ is reducible. We use an identical approach to cover also the case when $\delta\ne 0$.

\begin{lemma}\label{lem:isreducible}
    Let $P(x) = x^3 -\delta x^2 -\gamma x -\theta \in \F_{q^2}[x]$. If $H_P$ is absolutely irreducible, then $P(x)$ does not satisfy Condition (1).
\end{lemma}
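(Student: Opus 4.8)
The plan is to combine the general large-$q$ result of Lemma \ref{lem:hpredlargeq} with an explicit computation that handles all the small fields left uncovered. When $m=3$, the curve $\mathcal{H}_P^*$ has degree $\partial \le 2(m-1) = 4$, so Lemma \ref{lem:hpredlargeq} already tells us that if $H_P$ is absolutely irreducible and $P$ satisfies Condition (1), then $q$ must be bounded by the explicit constant obtained from $(\dagger)$ with $m=3$ (roughly $q < (2\cdot 3 + \sqrt{2\cdot 11})^2 \approx 110$, and in fact smaller once one inserts the true bounds $D \le \partial \le 4$). Combined with our standing hypothesis $p > 3$, this leaves only a short explicit list of field sizes $q$ to rule out by hand or by computer.

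First I would carry out the Aubry--Perret estimate precisely in the case $m = 3$: take $\partial \le 4$, $D \le \partial$, and read off from $(\dagger)$ the exact threshold $q_0$ below which the argument of Lemma \ref{lem:hpredlargeq} does not immediately apply. Then for each prime power $q \le q_0$ with $p>3$, I would appeal to a finite verification: for such $q$ one checks directly that no irreducible cubic $P(x) = x^3 - \delta x^2 - \gamma x - \theta \in \F_{q^2}[x]$ with $H_P$ absolutely irreducible satisfies Condition (1). Concretely, using the explicit expansion of $-H_P(z,w)$ given above for $m=3$, one can either (a) test absolute irreducibility of $H_P$ and, when it holds, exhibit a point of $Z$ on $\mathcal{H}_P$, or (b) invoke Lemma \ref{lem:bartoli} after reducing to the $\delta = 0$ case where possible and treating the residual cases separately. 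Since the statement is only an implication ("if $H_P$ is absolutely irreducible then $P$ fails Condition (1)"), it suffices to show that for every such $q$ and every such $P$ with $H_P$ absolutely irreducible, $H_P$ has a zero in $Z$; by Lemma \ref{lem:Hpol} this is exactly the failure of Condition (1).

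An alternative, cleaner route that I would prefer if it goes through is to sharpen the bound using structural information about $\mathcal{H}_P^*$. The curves $G_P$ and $H_P$ have symmetry (they are antisymmetric resp.\ symmetric under swapping $z$ and $w$), and we already know from Lemma \ref{P divides G} and Lemma \ref{lem:a=b factor} quite a lot about which factors can and cannot occur. If one can show that an absolutely irreducible $\mathcal{H}_P^*$ of degree $4$ necessarily has small $D$ (say $D \le 3$), the threshold in $(\dagger)$ drops substantially and the finite check becomes genuinely trivial or vanishes. In any case, the key point is that degree $4$ is small enough that the curve-counting bound leaves only finitely many exceptions.

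The main obstacle I anticipate is the finite verification itself: one must either run through the bounded list of $q$ by computer, or find a uniform elementary argument valid for all small $q$ with $p>3$ that mimics the proof of Lemma \ref{lem:hpredlargeq} but wrings enough out of the structure of $H_P$ to close the gap. Controlling the number $D$ of points at infinity of $\mathcal{H}_P^*$ (equivalently, the behaviour of the homogenisation of $K_P$ on the line at infinity) is the delicate part; getting $D$ as small as possible is what makes the bound in $(\dagger)$ sharp enough to be useful, and it is here that one must be careful about whether the top-degree part of $K_P(X,Y)$ factors into distinct linear forms or has repeated components. Once that is pinned down, the rest is bookkeeping.
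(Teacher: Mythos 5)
Your proposal follows essentially the same route as the paper's proof: there, inequality $(\dagger)$ is specialised to $m=3$ by computing the leading form of $H_P$, giving exactly $D=2$ ideal points when $\theta^q\delta+\gamma^q\ne 0$ (degree $4$, threshold $q\ge 47$) and $D=3$ in the degenerate degree-$3$ case (threshold $q\ge 13$), with the remaining $\delta=0$ degeneration being reducible outright, and the leftover range $q<47$ is closed by an exhaustive Magma search. Thus the sharpening of $D$ that you flag as the delicate step, together with the finite computer verification you anticipate, is precisely how the published argument proceeds, so your outline is correct and matches the paper.
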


\begin{proof}
First suppose that $\theta^q\delta +\gamma^q\ne 0$, which ensures that $H_P(z,w)$ has degree four. We homogenise $H_P(z,w)$ to obtain the polynomial $\overline{H_P}(Z,W,X)$, obtaining $\overline{H_P}(Z,W,0) = -(\theta^q\delta +\gamma^q)Z^2W^2$. Hence $\mathcal{H}_P$ has precisely two ideal points. Applying inequality ($\dagger$) from the proof of Lemma \ref{lem:hpredlargeq} with $\partial=4$ and $D = 2$ yields that there are no cubic polynomials $P$ satisfying Condition (1) for which $H_P$ is absolutely irreducible when $q \geq 47$.

Finally suppose that $\theta^q\delta +\gamma^q= 0$, in which case we have $H_P(z,w)=(\theta^{q+1}-1)(\delta^qzw(z+w)-(z^2+zw+w^2)-\delta^{q+1}zw+\delta(z+w))$. If $\delta=0$, then $H_P=(1-\theta^{q+1})(z^2+zw+w^2)$, which is either identically zero or reducible. If $\delta\ne 0$, then $H_P$ has degree $3$, and homogenising we obtain $\overline{H_P}(Z,W,0) = (\theta^{q+1}-1)\delta^qZW(Z+W)$, and so there are three ideal points. Using again inequality ($\dagger$) with $\partial=3$ and $D = 3$ yields that there are no cubic polynomials $P$ satisfying Condition (1) for which $H_P$ is absolutely irreducible when $q \geq 13$.

For values of $q < 47$, an exhaustive Magma search returns that $H_P$ is reducible for any cubic $P$ satisfying Condition (1). 
\end{proof}

We now examine the case in which $H_P$ is reducible, and study the possible factorizations of $H_P$.

\subsection{Further restrictions on the factorization of $H_P$}
\begin{lemma}\label{lem:HPfactor}
Suppose $H_P(z,w)$ is reducible over $\overline{\Fq}$. Then $H_P(z,w)$ is reducible over $\Fqt$, and $H_P(z,w) = \mu(czw+az+bw+d)(czw+bz+aw+d)$ for some $a,b,c,d,\mu\in\Fqt$, where $a\ne b$.
\end{lemma}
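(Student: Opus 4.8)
The plan is to exploit the symmetry of $H_P(z,w)$ together with the divisibility constraints already established. First I note that $G_P(z,w)$, and hence $H_P(z,w)$, is antisymmetric up to the factor $z-w$: precisely, $G_P(w,z) = -G_P(z,w)$, so $H_P(w,z) = H_P(z,w)$, i.e. $H_P$ is a symmetric polynomial of degree at most $4$ in each of $z,w$ (and of total degree $\le 4$ when the leading coefficient $\theta^q\delta+\gamma^q$ is nonzero). Assuming $H_P$ is reducible over $\overline{\Fq}$, write it as a product of irreducible factors. A symmetric polynomial whose factorisation over $\overline{\Fq}$ consists of factors each of which is either symmetric or comes in a symmetric pair; I would analyse the possible shapes of these factors using the explicit form of $-H_P(z,w)$ displayed above for $m=3$.

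The key structural input is Lemma \ref{P divides G} together with Lemma \ref{lem:a=b factor}. Since $P(x)$ is irreducible of degree $3$ and divides $H_P(x^{q^2},x)$, and $\tilde P(x)$ likewise divides $H_P(x^{q^2},x)$, any factorisation of $H_P$ must ``account for'' these two cubic factors after the substitution $w = z^{q^2}$. A generic irreducible factor of a symmetric biquadratic splits, after $w\mapsto z^{q^2}$, into something of degree at most $q^2\cdot(\text{partial degree})$; the point is that factors of $H_P$ that are \emph{linear in each variable separately}, i.e. of the form $czw + \alpha z + \beta w + d$, are exactly the ones that can contribute the required cubic divisors, since $c x^{q^2+1} + \alpha x^{q^2} + \beta x + d$ is the natural vehicle for a degree-$3$ irreducible divisor. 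I would first argue that $H_P$ cannot have an irreducible factor of degree $\ge 2$ in $z$ (or in $w$): such a factor, by symmetry, would force a companion factor, the degrees would overflow the bidegree $(2,2)$ unless that factor already is the whole of $H_P$, and an absolutely irreducible $H_P$ is excluded by Lemma \ref{lem:isreducible}. Hence every irreducible factor of $H_P$ is of the form $\alpha zw + \beta z + \gamma' w + \delta'$. There are exactly two such factors (counted with multiplicity), and they are interchanged by $z\leftrightarrow w$; writing one as $czw+az+bw+d$, the other is its swap $czw+bz+aw+d$ (after absorbing constants into the overall scalar $\mu$), and comparing leading coefficients in both factors shows they share the same $c$ and the same $d$. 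This yields $H_P(z,w) = \mu(czw+az+bw+d)(czw+bz+aw+d)$.

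The remaining assertions are that the constants can be taken in $\Fqt$ rather than just $\overline{\Fq}$, and that $a\ne b$. For the field of definition: if the two linear-in-each-variable factors were conjugate over a quadratic extension they would be swapped by that Frobenius, but they are already swapped by $z\leftrightarrow w$; one checks that the $\overline{\Fq}/\Fqt$-Galois action must therefore either fix each factor (done) or coincide with the swap on the unordered pair, and in the latter case $a,b$ are conjugate while $c,d$ are fixed — but then $\{a,b\}\subset\Fqt$ as the roots of a quadratic over $\Fqt$, forcing them into $\Fqt$ after the symmetric relabelling; either way we may take $a,b,c,d,\mu\in\Fqt$, reading coefficients off the displayed expansion of $-H_P$. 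For $a\ne b$: if $a=b$ then $H_P$ would have the form $\mu(czw+a(z+w)+d)^2$, in particular it would factor as a product of $2(m-1)=4$ factors each of the shape $c_izw+a_i(z+w)+d_i$, which is exactly what Lemma \ref{lem:a=b factor} forbids for an irreducible cubic $P$.

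The main obstacle I anticipate is the first step: ruling out irreducible factors of $H_P$ that have degree $\ge 2$ in one of the variables (for instance a factor like $z^2 + (\text{stuff})$ paired with its $z\leftrightarrow w$ image, or an irreducible symmetric quadratic such as $z^2+zw+w^2+\cdots$). Handling these requires carefully combining the bidegree bound $(2,2)$, the symmetry, the exclusion of the absolutely irreducible case, and the divisibility by $P$ and $\tilde P$ after $w\mapsto z^{q^2}$ — in particular showing that a symmetric quadratic factor cannot supply a cubic divisor of $H_P(x^{q^2},x)$ consistent with irreducibility of $P$. Once the factors are forced to be linear in each variable separately, the rest is bookkeeping with the explicit coefficients.
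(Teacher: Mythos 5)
Your overall strategy (symmetry plus bidegree $(2,2)$, divisibility of $H_P(x^{q^2},x)$ by $P$ and $\tilde P$, and Lemma \ref{lem:a=b factor} to force $a\ne b$) is the same as the paper's, but two steps in your argument do not go through as written. First, the step you yourself flag as the main obstacle is a genuine gap, and your proposed reason for it is incorrect: a factor of degree $2$ in $z$ alone does \emph{not} overflow the bidegree, since the factorisation $H_P(z,w)=\mu(cz^2+az+d)(cw^2+aw+d)$ has bidegree exactly $(2,2)$ and is perfectly symmetric. So factors of degree $\ge 2$ in a single variable cannot be excluded by degree counting, and Lemma \ref{lem:isreducible} is irrelevant here (reducibility over $\overline{\Fq}$ is the hypothesis). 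The paper excludes this case by a different mechanism: in that situation $H_P(x^{q^2},x)=\mu(cx^2+ax+d)^{q^2+1}$, so the irreducible cubic $P(x)$ would have to divide the quadratic $cx^2+ax+d$, which is impossible. Without this (or an equivalent) argument your proof does not establish that every irreducible factor of $H_P$ is bilinear.

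Second, your descent of the coefficients to $\Fqt$ is fallacious in the problematic case. If the Frobenius $x\mapsto x^{q^2}$ interchanges the two bilinear factors, then (after normalising $c,d\in\Fqt$) one gets $b=a^{q^2}$ with $a,b\in\FF_{q^4}$; being roots of a quadratic over $\Fqt$ does \emph{not} force $a,b\in\Fqt$ --- roots of an irreducible quadratic over $\Fqt$ lie in $\FF_{q^4}\setminus\Fqt$, so no ``symmetric relabelling'' rescues this. The paper needs a substantive argument precisely here: assuming $b=a^{q^2}\notin\Fqt$, it uses $P(x)\mid H_P(x^{q^2},x)$ and a root $\varepsilon$ of $P$ to derive $c\varepsilon^{q^2}=-a$, whence either $c=0$ forces $H_P\equiv 0$, or $\varepsilon^{q^2}\in\FF_{q^4}$, contradicting $\Fqt(\varepsilon)=\FF_{q^6}$. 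Your treatment of the shape of the two factors (same $c$, same $d$, swapped $a,b$) and your use of Lemma \ref{lem:a=b factor} to get $a\ne b$ and to kill the case of two symmetric bilinear factors are fine and match the paper, but the two gaps above are exactly where the real content of the lemma lies.
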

\begin{proof}
Since $H_P(z,w)$ has degree at most 4, has degree at most $2$ in $z$ and in $w$,  and is symmetric in $z$ and $w$, we must have that either
\[H_P(z,w) = \mu(cz^2 +az +d)(cw^2 +aw +d) \tag{A}\]
or
\[H_P(z,w) = \mu(czw +az +bw +d)(czw +bz +aw +d) \tag{B}\]
or
\[H_P(z,w) = (czw +a(z+w) +d)(c'zw +b(z+w) +d') \tag{C}\]
for some $a,b,c,c',d,d',\mu \in \overline\Fq$.

By Lemma \ref{lem:a=b factor}, case (C) cannot occur and $a \ne b$ in case (B). Since the the coefficents of $H_P$ are in $\Fqt$, then raising the coefficients in the irreducible factors of $H_P$ must permute these factors up to scalar multiples. In case (A), we can assume without loss of generality that $a,c,d\in \Fqt$. In case (B) we can assume without loss of generality that $c,d\in \Fqt$, and either $a,b\in \Fqt$ or $a,b\in \FF_{q^4}$ with $a^{q^2}=b$.

If $H_P(z,w)$ factorizes as in (A), then by Lemma \ref{P divides G},
\begin{align*}
P(x) \mid H_P(x^{q^2},x) & =\mu(cx^{2q^2} +ax^{q^2}+d)(cx^2 +ax +d)\\
& =\mu(cx^2 +ax +d)^{q^2 +1}.
\end{align*}
As $P(x)$ is irreducible, it must divide $cx^2 +ax +d$. But the degree of $P(x)$ is 3, so case (A) cannot occur.

Thus $H_P(z,w)$ must factorize as in (B). If $a,b \not \in \Fqt$ then $b=a^{q^2}$ and
\[P(x) \mid H_P(x^{q^2},x) = \mu(cx^{q^2 +1} +ax^{q^2} +a^{q^2}x +d)(cx^{q^2 +1} +a^{q^2}x^{q^2} +ax +d).\]
Let $\varepsilon$ be a root of $P$. Then either
\[c\varepsilon^{q^2 +1} +(a\varepsilon)^{q^2} +a\varepsilon +d =0\]
or
\[c\varepsilon^{q^2 +1} +(b\varepsilon)^{q^2} +b\varepsilon +d =0.\]

We can assume without loss of generality that the first equation holds. Then raising both sides to the power of $q^2$ yields
\begin{align*}
c\varepsilon^{q^4 +q^2} +a\varepsilon^{q^4} +(a\varepsilon)^{q^2} +d &= 0 \\
\iff c\varepsilon^{q^4 +q^2} +a\varepsilon^{q^4} +(a\varepsilon)^{q^2} -(c\varepsilon^{q^2 +1} +(a\varepsilon)^{q^2} +a\varepsilon) &= 0\\
\iff (\varepsilon^{q^4}-\varepsilon)(c\varepsilon^{q^2} +a) &=0.
\end{align*}
The first factor cannot equal zero since $\Fqt(\varepsilon) = \F_{q^6}$. Hence $c\varepsilon^{q^2} =-a$. If $c=0$ then $a=0$, so $d=0$ and $H_P \equiv 0$. Thus $\varepsilon^{q^2} = -ac^{-1} \in \F_{q^4}$, which cannot occur since it is also a root of $P$. Hence $a,b \in \Fqt$.
\end{proof}

The following technical lemma will be of use in the subsequent theorem.
\begin{lemma}\label{lem:wquadratic}
Suppose $f(x)=ex^2+\lambda x+e^q$ for some $0\ne e\in \Fqt,\lambda\in\Fq$. Then $f(x)$ has a root $w$ such that $w^{q+1}=1$ if and only if its discriminant $\lambda^2-4e^{q+1}$ is either $0$ or a nonsquare in $\Fq$.
\end{lemma}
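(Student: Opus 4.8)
The plan is to reduce the existence of a root $w$ with $w^{q+1}=1$ to a statement about when $f$ has two roots that are conjugate over $\Fq$, since the condition $w^{q+1}=1$ (i.e.\ $w^q = w^{-1}$) together with the shape of $f$ forces the two roots to be Galois conjugates. First I would observe that $f(x)=ex^2+\lambda x+e^q$ has constant term $e^q$ and leading coefficient $e$, so if $w_1,w_2$ are its two roots in $\overline{\Fq}$ then $w_1 w_2 = e^q/e = e^{q-1}$, which already has norm $1$ over $\Fq$, and $w_1 + w_2 = -\lambda/e$. Next I would note the crucial self-reciprocal symmetry: applying the $q$-power Frobenius to the equation $e w^2 + \lambda w + e^q = 0$ and using $\lambda\in\Fq$ gives $e^q w^{2q} + \lambda w^q + e = 0$, i.e.\ $w^q$ is a root of $e^q x^2 + \lambda x + e$, which is the reverse polynomial of $f$; equivalently, if $w\ne 0$ is a root of $f$ then $1/w^q$ is also a root of $f$. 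Hence the root set of $f$ is stable under $w\mapsto 1/w^q$.

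With that in hand the argument splits according to whether $f$ is irreducible over $\Fq$, factors into two distinct linear factors, or is a square. If $f$ is irreducible over $\Fq$, its roots are $w$ and $w^q$, and stability of the root set under $w\mapsto 1/w^q$ forces either $w^q = 1/w^q$ (impossible unless $w^{2q}=1$, which would put $w$ in $\Fq$, contradicting irreducibility, barring the degenerate $w=\pm1$ case handled separately) or $w = 1/w^q$, i.e.\ $w^{q+1}=1$ — so in the irreducible case a root with the desired property automatically exists. If instead $f$ has its roots in $\Fq$, then each root $w$ satisfies $w^q = w$, so $w^{q+1}=1$ iff $w^2 = 1$ iff $w=\pm1$; one then checks directly whether $\pm1$ can be a root — $f(1) = e+\lambda+e^q = \Tr_{\Fqt/\Fq}(e)+\lambda$ and $f(-1)=e-\lambda+e^q=\Tr_{\Fqt/\Fq}(e)-\lambda$ — and argues this is a boundary situation corresponding to the discriminant being a square. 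Finally, "irreducible over $\Fq$ versus split" is governed precisely by whether the discriminant $\Delta = \lambda^2 - 4e^{q+1}$ is a nonsquare or a (nonzero) square in $\Fq$, while $\Delta = 0$ is the repeated-root case; this is where the "$0$ or a nonsquare" dichotomy in the statement comes from. I should be careful that $\Delta$ genuinely lies in $\Fq$: indeed $e^{q+1} = e\cdot e^q = N_{\Fqt/\Fq}(e)\in\Fq$ and $\lambda\in\Fq$ by hypothesis, so $\Delta\in\Fq$ and the quadratic-residue classification is meaningful.

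The cleanest way to package all of this is probably to substitute $w = e^{(q-1)/2}\,t$ after noting $w_1 w_2 = e^{q-1}$ — or more robustly, to write $w = s$ and directly use that $w^{q+1}=1$ says $w\overline{w}=1$ where $\overline{\cdot}$ is conjugation, i.e.\ $w$ lies on the "unit circle" $\{x\in\Fqt : N_{\Fqt/\Fq}(x)=1\}$, a cyclic group of order $q+1$. Then $f(x) = e(x-w)(x-1/w^q)$ for such a $w$, and expanding, $\lambda/e = w + 1/w^q = w + \overline{w^{-1}} = w + \overline{w}$ (using $w^{-1}=\overline w$), so $\lambda = e(w+\overline w) = ew + \overline{ew} = \Tr_{\Fqt/\Fq}(ew)$, which is manifestly in $\Fq$ and consistent. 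Running this backwards: a root $w$ with $N(w)=1$ exists iff the equation $\lambda = \Tr_{\Fqt/\Fq}(ew)$ has a solution $w$ on the norm-$1$ circle, and a short counting/character argument (or the substitution reducing $f$ to $t^2 - (\lambda/e^{(q+1)/2}) t + 1$ when $(q+1)/2$ makes sense, i.e.\ tracking whether $e$ is a square) converts this to the condition that $x^2 - \Delta$-type quadratic splits appropriately, i.e.\ $\Delta$ is $0$ or a nonsquare.

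I expect the main obstacle to be the bookkeeping around squares: $e$ itself need not be a square in $\Fqt$, so the naive normalization $w\mapsto w/e^{(q-1)/2}$ needs care, and one must make sure the dichotomy "$\Delta$ a nonsquare in $\Fq$" correctly captures "$f$ irreducible over $\Fq$" given that the leading coefficient $e$ is not $1$. The safest route is to avoid normalization entirely and argue purely via the root-set symmetry $w\mapsto 1/w^q$ combined with the standard fact that the quadratic $f$ over $\Fq$ (after clearing to make it monic, which is legitimate since $e\ne 0$ and this doesn't change the roots) is irreducible over $\Fq$ iff $\Delta$ is a nonsquare and has a repeated root iff $\Delta=0$; the degenerate cases $w=\pm 1$ then slot in exactly at $\Delta=0$ and need a one-line separate check. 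Everything else is routine Frobenius manipulation of the kind already used repeatedly in Lemmas \ref{P divides G} and \ref{lem:a=b factor}.
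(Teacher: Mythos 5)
Your central case analysis rests on a false premise: $f$ is not a polynomial over $\Fq$. Its leading coefficient is $e$ and its constant term is $e^q$, with $e\in\Fqt$ an arbitrary nonzero element, so the trichotomy ``$f$ irreducible over $\Fq$ / $f$ splits with roots in $\Fq$ / $f$ is a square'' is not available, and the two claims you lean on — ``if $f$ is irreducible over $\Fq$ its roots are $w$ and $w^q$'' and ``if $f$ splits, its roots lie in $\Fq$'' — fail here. In fact, since $\Delta=\lambda^2-4e^{q+1}\in\Fq$ is always a square in $\Fqt$, the roots $w=\frac{-\lambda\pm\sqrt{\Delta}}{2e}$ always lie in $\Fqt$, and when $\Delta$ is a nonzero square in $\Fq$ they are generically not in $\Fq$ (the $q$-Frobenius does not permute them because it does not fix the coefficients). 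So ``$\Delta$ a nonsquare in $\Fq$'' is not equivalent to any irreducibility statement about $f$ itself, and your argument never actually ties the existence of a norm-one root to the discriminant condition: the crux is deferred to ``a short counting/character argument'', which is precisely what needs to be proved. (There is also a sign slip: from $ew^2+\lambda w+e^q=0$ and $w^{q+1}=1$ one gets $\lambda=-\Tr(ew)$, not $+\Tr(ew)$.)

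The symmetry $w\mapsto 1/w^q$ you identify is genuine and can be made to work, but only after passing to a polynomial that really lies in $\Fq[x]$: substitute $z=ew$, so that $w$ is a root of $f$ with $w^{q+1}=1$ if and only if $z$ is a root of $g(x)=x^2+\lambda x+e^{q+1}\in\Fq[x]$ with $z^{q+1}=e^{q+1}$. If $g$ is irreducible over $\Fq$ (equivalently $\Delta$, which is also the discriminant of $g$, is a nonsquare in $\Fq$), its roots are $z,z^q\in\Fqt$ with $z\cdot z^q=e^{q+1}$, so the norm condition holds automatically; if instead $g$ has a root $z\in\Fq$, then $z^{q+1}=z^2$ equals the product $e^{q+1}$ of the two roots only when the root is repeated, i.e.\ $\Delta=0$. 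This repairs your approach (your own substitution attempt with $e^{(q-1)/2}$ indeed founders on whether $e$ is a square, which is why $z=ew$ is the right normalisation). The paper's proof avoids all of this and simply computes $w^{q+1}$ from the quadratic formula, splitting on whether $(\sqrt{\Delta})^q=\sqrt{\Delta}$ or $-\sqrt{\Delta}$; as written, your proposal has a genuine gap at its central step.
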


\begin{proof}
Let $w$ be a root of $f$. Then $w=\frac{-\lambda\pm\sqrt{\lambda^2-4e^{q+1}}}{2e}\in \Fqt$. Let $\Delta=\lambda^2-4e^{q+1}$, which is in $\Fq$. 

Suppose $\Delta$ is a square in $\Fq$. Then $(\sqrt{\Delta})^q=\sqrt{\Delta}$, and so
\begin{align*}
w^{q+1}&= \left(\frac{-\lambda\pm\sqrt{\Delta}}{2e}\right)\left(\frac{-\lambda\pm\sqrt{\Delta}}{2e^q}\right)   \\
&= \frac{\lambda^2+\Delta\mp 2\lambda \sqrt{\Delta}}{4e^{q+1}}\\
\end{align*}
Then $w^{q+1}=1$ if and only if $\lambda^2+\Delta\mp 2\lambda \sqrt{\Delta}=4e^{q+1}$, if and only if $2\Delta= \pm 2\lambda \sqrt{\Delta}$, if and only if $\Delta=0$ or $\lambda=\pm\sqrt{\Delta}$. But if $\lambda=\pm\sqrt{\Delta}$ then $e=0$, and so $w^{q+1}=1$ if and only if $\Delta=0$.

Suppose now that $\Delta$ is not a square in $\Fq$. Then $(\sqrt{\Delta})^q=-\sqrt{\Delta}$, and so
\begin{align*}
w^{q+1}&= \left(\frac{-\lambda\pm\sqrt{\Delta}}{2e}\right)\left(\frac{-\lambda\mp\sqrt{\Delta}}{2e^q}\right)   \\
&= \frac{\lambda^2-\Delta}{4e^{q+1}}\\
&=1,
\end{align*}
completing the proof.
\end{proof}

By Lemma \ref{lem:HPfactor}, we know the possible factorizations of $H_P$. We now find further restrictions on the possible values of $a,b,c,d$. Note that the roles of $a$ and $b$ are interchangeable, and so whenever we encounter a condition that must be satisfied by either $a$ or $b$, we can assume without loss of generality that it is satisfied by $a$.

\begin{lemma}\label{lem:ab=cd}
Suppose $H_P(z,w) = (czw +az +bw +d)(czw +bz +aw +d)$ for some $a,b,c,d \in \Fqt$, $a\ne b$. If $ab =cd$, then $P(x)$ is reducible.
\end{lemma}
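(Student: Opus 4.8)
The plan is to use the fact that $P(x)$ divides $H_P(x^{q^2},x)$ (Lemma \ref{P divides G}) together with the hypothesis $ab=cd$ to force $P(x)$ to have a root lying in a proper subfield of $\F_{q^6}$, contradicting irreducibility — or more directly, to produce an explicit linear/quadratic factor of $P(x)$ over $\Fqt$. First I would substitute $z=x^{q^2}$, $w=x$ into the factorization and observe that $P(x)$, being irreducible of degree $3$, must divide one of the two factors, say $cx^{q^2+1}+ax^{q^2}+bx+d$ (the roles of $a,b$ are interchangeable, as the authors note). So a root $\varepsilon$ of $P$ satisfies $c\varepsilon^{q^2+1}+a\varepsilon^{q^2}+b\varepsilon+d=0$.

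The key step is to exploit $ab=cd$. If $c\ne 0$, write $d=ab/c$, so the relation becomes $c\varepsilon^{q^2+1}+a\varepsilon^{q^2}+b\varepsilon+ab/c=0$, i.e. $c(\varepsilon^{q^2}+b/c)(\varepsilon+a/c)=0$ — the quadratic in the two "variables" $\varepsilon^{q^2}$ and $\varepsilon$ factors precisely because the constant term is the product of the two linear coefficients divided by the leading one. Hence either $\varepsilon^{q^2}=-b/c\in\Fqt$ or $\varepsilon=-a/c\in\Fqt$. In the first case $\varepsilon^{q^2}\in\Fqt$ forces $\varepsilon\in\F_{q^4}$ (since $\varepsilon=(\varepsilon^{q^2})^{q^2}\cdot\text{(something)}$ — more carefully, $\varepsilon^{q^2}\in\Fqt$ means $\varepsilon^{q^4}=\varepsilon^{q^2}=\varepsilon$ after one more Frobenius, wait: $\varepsilon^{q^2}=-b/c$ gives $(\varepsilon^{q^2})^{q^2}=(-b/c)^{q^2}$, and since $-b/c\in\Fqt$ this is $-b/c$ again, so $\varepsilon^{q^4}=\varepsilon^{q^2}$, whence $\varepsilon^{q^2}\in\F_{q^2}$ is consistent but also $\varepsilon$ has $q^4$-Frobenius equal to its $q^2$-Frobenius, forcing $\varepsilon\in\F_{q^4}$), contradicting $\F_{q^6}=\Fqt(\varepsilon)$. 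In the second case $\varepsilon=-a/c\in\Fqt$ directly contradicts $\deg P=3$. The case $c=0$ must be handled separately: then $ab=cd=0$, so $a=0$ or $b=0$; if $a=0$ then the factor is $bx+d$, and since $b\ne a=0$ we get $\varepsilon=-d/b\in\Fqt$, again impossible; if $b=0$ symmetrically $\varepsilon^{q^2}=-d/a\in\Fqt$, impossible as before. Either way $P(x)$ cannot be irreducible.

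The main obstacle I anticipate is bookkeeping around the degenerate subcases (particularly $c=0$, and making sure the "$\varepsilon^{q^2}$ in a subfield" argument genuinely lands $\varepsilon$ in $\F_{q^4}$ rather than merely in some field whose intersection with $\F_{q^6}$ is still all of $\F_{q^6}$). Since $\gcd(4,6)=2$, $\F_{q^4}\cap\F_{q^6}=\Fqt$, so "$\varepsilon\in\F_{q^4}$ and $\varepsilon\in\F_{q^6}$" would force $\varepsilon\in\Fqt$ — actually this is even stronger than needed and still gives a contradiction with $\deg P=3$. The algebraic factorization $c\varepsilon^{q^2+1}+a\varepsilon^{q^2}+b\varepsilon+d = c(\varepsilon^{q^2}+b/c)(\varepsilon+a/c)$ is the crux and is an entirely routine verification once $ab=cd$ is substituted; everything else is chasing Frobenius orbits, mirroring the style of Lemmas \ref{lem:a=b factor} and \ref{lem:HPfactor}.
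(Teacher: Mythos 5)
Your proposal is correct and takes essentially the same route as the paper: both rest on Lemma \ref{P divides G} together with the key observation that $ab=cd$ makes the factor $cx^{q^2+1}+ax^{q^2}+bx+d$ split as $(cx+a)(x^{q^2}+b/c)$ (with the obvious adjustments when $c=0$ or $d=0$). The paper states the conclusion as ``$P$ divides a product of linear factors with coefficients in $\Fqt$'' for the whole of $G_P(x^{q^2},x)$, whereas you localise to the factor divisible by $P$ and chase a root $\varepsilon$ to the contradiction $\varepsilon\in\Fqt$; this is the same argument in different clothing.
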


\begin{proof}
First suppose $d \ne 0$. By Lemma \ref{P divides G}, $P(x)$ divides
\begin{align*}
G_P(x^{q^2},x) = (x^{q^2}-x)H_P(x^{q^2},x) &= \prod_{\lambda \in \Fqt}(x -\lambda)(cx^{q^2 +1} +ax^{q^2} +bx +d)(cx^{q^2 +1} +bx^{q^2} +ax +d)\\
&=\prod_{\lambda \in \Fqt}(x -\lambda)(x+ac^{-1})(x+bc^{-1})((cx+b)(cx+a))^{q^2}.
\end{align*}
Since $P(x)$ divides a product of linear factors with coefficients in $\Fqt$, it must be reducible. If $d=0$, then either $a=0$ or $b=0$. Suppose without loss of generality that $a=0$. Then $P(x)$ divides
\begin{align*}
G_P(x^{q^2},x) = (x^{q^2}-x)H_P(x^{q^2},x) &= \prod_{\lambda \in \Fqt}(x -\lambda)(cx^{q^2 +1} +bx)(cx^{q^2 +1} +bx^{q^2})\\
&=\prod_{\lambda \in \Fqt}(x -\lambda)(x(cx+b))^{q^2 +1},
\end{align*}
so $P(x)$ is again reducible.
\end{proof}

Hence when considering divisors of $H_P$, we can assume that $ab\ne cd$. We now find further conditions on the divisors of $H_P$ if $P$ satisfies Condition (1).

\begin{theorem}\label{thm:HPfacC1}
Let $H_\Psi(z,w) = czw+az+bw+d$, where $a,b,c,d\in \Fqt$, $ab \ne cd$. Then there exist $z,w\in \Fqt$ such that $H_\Psi(z,w)=0$, $w\ne z$, and $z^{q+1}=w^{q+1}=1$ if and only if 
\[
\Delta = (a^{q+1}-b^{q+1}+c^{q+1}-d^{q+1})^2-4(bd^q-a^qc)^{q+1},
\]
is zero or a nonsquare in $\Fq$, and the quadratic $(bd^q-a^qc)x^2+(d^{q+1}+b^{q+1}-c^{q+1}-a^{q+1})x+(b^qd-ac^q)$ possesses a root which is not a root of $cx^2+(a+b)x+d$.
\end{theorem}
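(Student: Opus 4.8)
The plan is to directly parametrise the points $(z,w)$ with $z^{q+1}=w^{q+1}=1$ lying on the conic $H_\Psi(z,w)=czw+az+bw+d=0$, and then translate the existence of such a point (with $z\ne w$) into the stated discriminant and root conditions. The key structural fact I would exploit is that the condition $z^{q+1}=1$ means $z^q=z^{-1}$, so the Frobenius-twisted relations become rational relations. Concretely, from $czw+az+bw+d=0$ with $c\ne 0$ (the case $c=0$ I would treat separately, or absorb it by noting $ab\ne cd$ forces things) one solves $w=-(az+d)/(cz+b)$, and then imposes $w^{q+1}=1$, i.e. $w\cdot w^q=1$, i.e. $w\cdot\overline w=1$ where $\overline{(\cdot)}$ denotes applying $x\mapsto x^q$. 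Using $z^q=1/z$, the equation $w^{q+1}=1$ becomes
\[
\frac{(az+d)(a^q z^{-1}+d^q)}{(cz+b)(c^q z^{-1}+b^q)} = 1,
\]
which after clearing denominators and multiplying by $z$ is a polynomial equation in $z$ of degree $2$.

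The second step is to expand that degree-$2$ equation in $z$ and collect terms. Writing it as $\alpha z^2+\beta z+\gamma=0$, one computes $\alpha = a^q c - b d^q$ (up to sign), $\gamma = a c^q - b^q d$, and $\beta = a^{q+1}+d^{q+1}-b^{q+1}-c^{q+1}$ — note $\alpha = -(bd^q-a^qc)$, $\gamma = -(b^qd-ac^q)$, and $\beta = d^{q+1}+b^{q+1}-c^{q+1}-a^{q+1}$, which are exactly (negatives of) the coefficients of the quadratic $(bd^q-a^qc)x^2+(d^{q+1}+b^{q+1}-c^{q+1}-a^{q+1})x+(b^qd-ac^q)$ in the statement. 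Its discriminant is $\beta^2-4\alpha\gamma$; computing $4\alpha\gamma = 4(bd^q-a^qc)(b^qd-ac^q) = 4(bd^q-a^qc)^{q+1}$ and observing $\beta = a^{q+1}-b^{q+1}+c^{q+1}-d^{q+1}$ up to sign gives the displayed $\Delta$. Crucially, $\Delta\in\Fq$: both $\beta$ and $\alpha\gamma$ are fixed by $x\mapsto x^q$ (this is where I'd check $\beta^q=\beta$ and $(\alpha\gamma)^q=\alpha\gamma$ by inspection of the expressions). Then I apply Lemma~\ref{lem:wquadratic} — or rather its proof technique — with $f(x)=\alpha x^2+\beta x+\gamma$: after noting $\gamma = \alpha^q$ and $\beta\in\Fq$, the lemma says $f$ has a root $z$ with $z^{q+1}=1$ iff $\Delta$ is zero or a nonsquare in $\Fq$. (I should double check the hypothesis $\alpha\ne 0$ of Lemma~\ref{lem:wquadratic}; if $\alpha=0$, i.e. $bd^q=a^qc$, this is a degenerate case I'd handle directly, and the condition $ab\ne cd$ should prevent the fully degenerate collapse.)

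The remaining point is the clause ``possesses a root which is not a root of $cx^2+(a+b)x+d$''. This is needed to guarantee $z\ne w$: recall $w=-(az+d)/(cz+b)$, so $z=w$ exactly when $cz^2+(a+b)z+d=0$. A root $z$ of $f$ with $z^{q+1}=1$ gives a genuine point of $Z$ on $H_\Psi$ precisely when it is \emph{not} a root of $cx^2+(a+b)x+d$; and one must verify that if $z$ is such a root then automatically $w^{q+1}=1$ as well and $w$ is well-defined (i.e. $cz+b\ne 0$ — here again $ab\ne cd$ is the safeguard, since $cz+b=0$ together with $czw+az+bw+d=0$ would force $az+d=0$, hence $a/c=d/b$ if $b,c\ne 0$, i.e. $ab=cd$). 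Conversely, any point $(z,w)\in Z$ on $H_\Psi$ yields via $w=-(az+d)/(cz+b)$ a root $z$ of $f$ with $z^{q+1}=1$, and $z\ne w$ says $z$ is not a root of $cx^2+(a+b)x+d$. So the two conditions are exactly equivalent.

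I expect the main obstacle to be the bookkeeping of degenerate cases: $c=0$; $\alpha=bd^q-a^qc=0$ (where $f$ drops to degree $\le 1$ and Lemma~\ref{lem:wquadratic} does not literally apply); $cz+b=0$ along a putative solution; and the possibility that the two factors $czw+az+bw+d$ and $czw+bz+aw+d$ share a common solution in $Z$. The hypothesis $ab\ne cd$ is presumably exactly what rules out the worst of these collapses, so a careful accounting of where $ab\ne cd$ is used — rather than any single hard computation — will be the crux. The algebra itself (expanding the degree-$2$ polynomial in $z$, checking $\Delta\in\Fq$, identifying coefficients) is routine, and the $z\ne w$ translation is the clean conceptual piece.
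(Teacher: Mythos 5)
Your overall strategy is exactly the paper's: solve the bilinear equation for one variable as a fractional linear expression in the other, impose the norm conditions via $x^q=x^{-1}$ to get a quadratic of the shape $ex^2+\lambda x+e^q$ with $\lambda\in\Fq$, invoke Lemma \ref{lem:wquadratic}, and translate $z\ne w$ into ``not a root of $cx^2+(a+b)x+d$'' (which is indeed the right translation, since $z=w$ forces $cz^2+(a+b)z+d=0$ in either parametrisation). The degenerate cases you flag ($cz+b=0$, vanishing leading coefficient, $c=0$) are handled in the paper by the hypothesis $ab\ne cd$ and Lemma \ref{lem:ab=cd}, as you anticipate.

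However, there is a concrete algebra slip in the step where you identify your quadratic with the one in the statement. Solving for $w=-(az+d)/(cz+b)$ and clearing denominators with $z^q=z^{-1}$ gives
\[
(ad^q-b^qc)z^2+(a^{q+1}+d^{q+1}-b^{q+1}-c^{q+1})z+(a^qd-bc^q)=0,
\]
i.e.\ the $a\leftrightarrow b$ swap of the quadratic in the statement, not that quadratic itself. Your claimed coefficients $\alpha=a^qc-bd^q$, $\gamma=ac^q-b^qd$ are not what this parametrisation produces, and your two expressions for $\beta$ (namely $a^{q+1}+d^{q+1}-b^{q+1}-c^{q+1}$ and $d^{q+1}+b^{q+1}-c^{q+1}-a^{q+1}$) are inconsistent with each other unless $a^{q+1}=b^{q+1}$; consequently the displayed computation $4\alpha\gamma=4(bd^q-a^qc)^{q+1}$ is also not what your quadratic yields. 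The slip is repairable in two ways: either solve instead for $z=-(bw+d)/(cw+a)$, as the paper does, which produces verbatim the quadratic $(bd^q-a^qc)x^2+(d^{q+1}+b^{q+1}-c^{q+1}-a^{q+1})x+(b^qd-ac^q)$ in $w$; or keep your parametrisation and add the (short but not purely formal) verification that the swapped quadratic has the \emph{same} discriminant $\Delta$ — this follows from the identity $(A-B+C-D)^2-(A-B-C+D)^2=4(A-B)(C-D)$ with $A=a^{q+1}$, etc. — together with the observation that $cx^2+(a+b)x+d$ is symmetric under $a\leftrightarrow b$, so the resulting criterion is equivalent to the one stated. As written, the proof proves the swapped-variable version while asserting, incorrectly, that its coefficients are those of the statement.
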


\begin{proof}
Let $z,w\in\Fqt$ be such that $H_\Psi(z,w)=0$ and $z^{q+1}=w^{q+1}=1$. Then either $cw+a=bw+d=0$, or $z=-\left(\frac{bw+d}{cw+a}\right)$. In the first case we have $ab = -bcw = cd$, so by Lemma \ref{lem:ab=cd} $P(x)$ is reducible and does not satisfy Condition (1).

Next we suppose that $cw+a \ne 0$ and $z=-\left(\frac{bw+d}{cw+a}\right)$. Raising both sides to the power $q+1$, imposing $z^{q+1}=w^{q+1}=1$ and rearranging, we get that
\begin{equation}\label{eqn:wquad1}
(bd^q-a^qc)w^2+(d^{q+1}+b^{q+1}-c^{q+1}-a^{q+1})w+(b^qd-ac^q)=0.
\end{equation}
If $bd^q-a^qc\ne 0$, then this is a quadratic equation in $w$ with coefficients in $\Fqt$ satisfying the conditions of Lemma \ref{lem:wquadratic}. The discriminant of the quadratic is
\[
\Delta = (a^{q+1}-b^{q+1}+c^{q+1}-d^{q+1})^2-4(bd^q-a^qc)^{q+1},
\]
and so from Lemma \ref{lem:wquadratic} we have that $w^{q+1}=1$ if and only if $\Delta$ is either zero or a nonsquare in $\Fq$.

Now $z=w$ if and only if $w=-\left(\frac{bw+d}{cw+a}\right)$, if and only if $cw^2+(a+b)w+d=0$. Thus we have a solution with $z\ne w$ if and only if not every solution of equation (\ref{eqn:wquad1}) is also a solution of $cw^2+(a+b)w+d=0$. 
\end{proof}

We summarise the results of this section with the following statement.

\begin{corollary}
Let $P(x)$ be an irreducible cubic in $\Fqt[x]$. Suppose $H_P(z,w)$ is reducible, with $H_P(z,w) = (czw+az+bw+d)(czw+bz+aw+d)=0$ for some $a,b,c,d\in\Fqt$, and let $\Delta$ be as in Theorem \ref{thm:HPfacC1}. Then $P(x)$ satisfies Condition (1) if and only if one of the following occur:
\begin{itemize}
    \item $\Delta$ is a nonzero square in $\Fq$;
    \item $\Delta$ is a nonsquare in $\Fq$ and the quadratic polynomials $(bd^q-a^qc)x^2+(d^{q+1}+b^{q+1}-c^{q+1}-a^{q+1})x+(b^qd-ac^q)$ and $cx^2+(a+b)x+d$ are nonzero scalar multiples of each other;
    \item $\Delta=0$, $bd^q-a^qc\ne 0$, and the unique root of $(bd^q-a^qc)x^2+(d^{q+1}+b^{q+1}-c^{q+1}-a^{q+1})x+(b^qd-ac^q)$ is a root of $cx^2+(a+b)x+d$.
\end{itemize}
\end{corollary}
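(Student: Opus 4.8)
The plan is to assemble the results of this section. By Lemma~\ref{lem:Hpol}, $P(x)$ satisfies Condition (1) if and only if $H_P(z,w)$ has no zeroes in $Z$. Since $P$ is irreducible, the contrapositive of Lemma~\ref{lem:ab=cd} gives $ab\ne cd$, so Theorem~\ref{thm:HPfacC1} applies to the factor $H_\Psi(z,w)=czw+az+bw+d$. I would then use the symmetry of the factorisation: interchanging $z$ and $w$ in $czw+az+bw+d$ produces the other factor $czw+bz+aw+d$, and $Z$ is invariant under this interchange, so a point $(z_0,w_0)\in Z$ is a zero of $H_P$ exactly when $(z_0,w_0)$ or $(w_0,z_0)$ is a zero of $H_\Psi$. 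Hence $H_P$ has a zero in $Z$ if and only if $H_\Psi$ does, and by Theorem~\ref{thm:HPfacC1} this happens precisely when $\Delta$ is zero or a nonsquare in $\Fq$ \emph{and} the quadratic $q_1(x):=(bd^q-a^qc)x^2+(d^{q+1}+b^{q+1}-c^{q+1}-a^{q+1})x+(b^qd-ac^q)$ has a root that is not a root of $q_2(x):=cx^2+(a+b)x+d$. Negating, $P(x)$ satisfies Condition (1) if and only if $\Delta$ is a nonzero square in $\Fq$, or else every root of $q_1$ is a root of $q_2$.

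Before the case analysis I would record two observations. First, writing $e:=bd^q-a^qc$, one computes $e^q=b^qd-ac^q$ (the constant term of $q_1$) and that $d^{q+1}+b^{q+1}-c^{q+1}-a^{q+1}$ is fixed by the $q$-power map, so $q_1$ has the shape $ex^2+\lambda x+e^q$ with $\lambda\in\Fq$; its discriminant equals $\Delta$, so Lemma~\ref{lem:wquadratic} applies. In particular, if $e\ne 0$ and $\Delta$ is zero or a nonsquare in $\Fq$ (hence $0$ or a square in $\Fqt$), all roots of $q_1$ lie in $\Fqt$ and satisfy $w^{q+1}=1$, with two distinct such roots when $\Delta\ne 0$ and one when $\Delta=0$. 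Second, $q_2\not\equiv 0$: for $q_2\equiv 0$ we would need $c=d=0$ and $b=-a$, whence $H_P$ is a scalar multiple of $(z-w)^2$ and $H_P(x^{q^2},x)$ a scalar multiple of $(x^{q^2}-x)^2$, forcing the irreducible cubic $P$ to divide a product of linear polynomials by Lemma~\ref{P divides G}, a contradiction.

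Now split on $\Delta$. If $\Delta$ is a nonzero square in $\Fq$, then $P$ satisfies Condition (1), giving the first bullet. If $\Delta$ is a nonsquare, then $e\ne 0$ (otherwise $\Delta=(a^{q+1}-b^{q+1}+c^{q+1}-d^{q+1})^2$ is a square), so $q_1$ is a quadratic with two distinct roots in $\Fqt$; since $q_2\not\equiv 0$, ``every root of $q_1$ is a root of $q_2$'' holds if and only if $q_2$ has degree $2$ and shares both roots of $q_1$, i.e.\ if and only if $q_1$ and $q_2$ are nonzero scalar multiples of each other — the second bullet. If $\Delta=0$ and $e\ne 0$, then $q_1$ has the unique root $w_0=-\lambda/(2e)\in\Fqt$ (using $p>3$), and ``every root of $q_1$ is a root of $q_2$'' says exactly that $w_0$ is a root of $q_2$ — the third bullet. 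If $\Delta=0$ and $e=0$, then the constant term $e^q$ and, since $\lambda^2=\Delta=0$, the linear coefficient of $q_1$ both vanish, so $q_1\equiv 0$; then ``every root of $q_1$ is a root of $q_2$'' would force $q_2\equiv 0$, contradicting the second observation, so $P$ does not satisfy Condition (1) — consistent with the third bullet's hypothesis $bd^q-a^qc\ne 0$.

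The main obstacle I anticipate is precisely this bookkeeping of the degenerate sub-cases in which $q_1$ fails to be a genuine quadratic or $q_2$ drops in degree: none of these are excluded outright by the hypotheses, and each must either be shown impossible for irreducible $P$ (via Lemma~\ref{P divides G}) or shown to lie outside the three stated conditions while still producing the correct verdict. Equivalently, the delicate point is keeping the logical negation of the biconditional in Theorem~\ref{thm:HPfacC1} in exact correspondence with the mutually exclusive three-way split on $\Delta$, so that the trichotomy of bullet points is genuinely complete.
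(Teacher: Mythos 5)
Your proposal is correct and takes essentially the route the paper intends: the corollary is presented there as a direct summary of the section, obtained by negating the criterion of Theorem \ref{thm:HPfacC1} (with $ab\ne cd$ supplied by Lemma \ref{lem:ab=cd} and the shape of the first quadratic controlled via Lemma \ref{lem:wquadratic}), which is exactly what you do. Your explicit bookkeeping of the degenerate sub-cases (the first quadratic vanishing identically when $bd^q-a^qc=0$ and $\Delta=0$, and the second quadratic dropping degree) just makes precise what the paper leaves implicit.
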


\section{Binomials}\label{sec:binomials}
In this section, we determine exact conditions for when a binomial satisfies Condition (1). Note that we will start in the case of a binomial of arbitrary degree, before stating the consequences for cubics.

\begin{lemma}\label{lem:binomialcond1}
Let $P(x)=x^m-\theta \in \Fqt[x]$, where $m>2$ is an integer. Then $H_P(z,w)$ is not identically zero and reducible if and only if $\theta^{q+1} \ne 1$. Furthermore, $P(x)$ satisfies Condition (1) if and only if $\gcd(m,q+1) = 1$.
\end{lemma}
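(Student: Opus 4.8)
The plan is to compute $H_P(z,w)$ explicitly for the binomial $P(x) = x^m - \theta$ and read off the two claims directly. First I would note that $\tilde{P}(x) = -\theta^q x^m + 1 = -\theta^q(x^m - \theta^{-q})$, so that
\[
G_P(z,w) = (z^m-\theta)(-\theta^q w^m + 1) - (-\theta^q z^m + 1)(w^m - \theta)
= (1 - \theta^{q+1})(z^m - w^m).
\]
Hence $H_P(z,w) = (1-\theta^{q+1})\dfrac{z^m - w^m}{z-w} = (1-\theta^{q+1})(z^{m-1} + z^{m-2}w + \cdots + w^{m-1})$. This makes the first claim transparent: $H_P \equiv 0$ exactly when $\theta^{q+1} = 1$, and when $\theta^{q+1} \neq 1$ the polynomial $z^{m-1} + \cdots + w^{m-1}$ is a nonzero multiple of it, which for $m > 2$ is reducible over $\overline{\Fq}$ (for instance $z^{m-1} + \cdots + w^{m-1} = \prod_{\zeta}(z - \zeta w)$ over the $m$-th roots of unity $\zeta \neq 1$, a product of $m-1 \geq 2$ linear factors). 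So the first sentence of the lemma follows with essentially no work.

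For the second claim I would apply Lemma~\ref{lem:Hpol}: $P(x)$ satisfies Condition (1) if and only if $H_P$ has no zeroes in $Z$, i.e. there are no $z, w \in \Fqt$ with $z^{q+1} = w^{q+1} = 1$, $z \neq w$, and $z^{m-1} + z^{m-2}w + \cdots + w^{m-1} = 0$ — equivalently (multiplying by $z - w \neq 0$, valid since $z \ne w$) no such $z, w$ with $z^m = w^m$. Writing $t = z/w$, this says: there is no $t \in \Fqt$ with $t \neq 1$, $t^{q+1} = 1$, and $t^m = 1$. The set of $t \in \Fqt^\times$ with $t^{q+1} = 1$ is the cyclic subgroup of order $q+1$; the set with $t^m = 1$ is cyclic of order $\gcd(m, q^2 - 1)$; their intersection is cyclic of order $\gcd(m, q+1)$. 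A nontrivial such $t$ exists if and only if $\gcd(m, q+1) > 1$. Therefore $P(x)$ satisfies Condition (1) if and only if $\gcd(m, q+1) = 1$.

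There is one gap to close for rigour: Lemmas~\ref{lem:Gpol}–\ref{lem:Hpol} are stated for \emph{irreducible} $P$ of degree $d = m$, so I should check that $x^m - \theta$ is irreducible over $\Fqt$ in the case that matters. When $\gcd(m, q+1) = 1$, if additionally $\theta$ is chosen appropriately (e.g. a primitive element of $\Fqt$, as in the Type 4 examples) then $x^m - \theta$ is irreducible over $\Fqt$; more generally one appeals to the standard criterion (Lidl–Niederreiter) that $x^m - \theta$ is irreducible over $\Fqt$ iff $\mathrm{ord}(\theta) \mid \tfrac{q^2-1}{\ell}$ fails for every prime $\ell \mid m$ and $4 \nmid q^2 - 1$ when $4 \mid m$. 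Since the lemma is really about which binomials can \emph{define a cyclic $2$-spread}, I would phrase the conclusion as: among irreducible binomials $x^m - \theta \in \Fqt[x]$, Condition (1) holds precisely when $\gcd(m, q+1) = 1$; the computation of $H_P$ above does not itself require irreducibility, only the final invocation of Lemma~\ref{lem:Hpol} does. The main (and only) obstacle is this bookkeeping around irreducibility; the core argument is the one-line factorisation of $G_P$ followed by a root-of-unity count, so no serious difficulty is expected.
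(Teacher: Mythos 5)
Your proposal is correct and follows essentially the same route as the paper: compute $G_P(z,w)=(1-\theta^{q+1})(z^m-w^m)$, read off that it vanishes identically iff $\theta^{q+1}=1$, and reduce Condition (1) to the existence of a nontrivial common root of $t^m=1$ and $t^{q+1}=1$, i.e.\ to $\gcd(m,q+1)>1$. Your extra remarks (the explicit linear factorisation of $H_P$ giving reducibility for $m>2$, and the caveat that Lemma~\ref{lem:Hpol} is stated for irreducible $P$) only make explicit details the paper leaves implicit.
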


\begin{proof}
We calculate that
\[G_P(z,w) = (\theta^{q+1}-1)(w^m -z^m).\]
Hence $G_P$ has a zero in $Z$ if and only if there exists $(z,w) \in \Fqt^2$ with $z^m=w^m$, $z^{q+1}=w^{q+1}=1$ and $z \ne w$. This occurs precisely when $\gcd(m,q+1) \ne 1$, so $P(x)$ satisfies Condition (1) if and only if $m$ and $q+1$ are coprime. Note that $G_P$ is identically zero if and only if $\theta^{q+1}=1$.
\end{proof}

We can apply the next well-known result to determine when $P(x)$ is irreducible.

\begin{lemma}\cite[Theorem 3.75]{lidlniederreiter}\label{lem:binomialirred} Let $m \geq 2$ be an integer and let $\theta \in \Fq^*$. Then $x^m -\theta \in \Fq[x]$ is irreducible if and only if the following hold:
\begin{enumerate}[(i)]
    \item $\operatorname{rad}(m) \mid o(\theta)$;
    \item $\gcd\left(m,\frac{q-1}{o(\theta)}\right) =1$;
    \item if $m \equiv 0 \mod 4$ then $q \equiv 1 \mod 4$.
\end{enumerate}
\end{lemma}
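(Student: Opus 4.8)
The plan is to deduce the lemma from the classical Vahlen--Capelli criterion for irreducibility of binomials over an arbitrary field, and then to translate it to the finite field $\Fq$ using that $\Fq^\times$ is cyclic of order $q-1$. Recall the general statement: for a field $F$, an integer $m\ge2$ and $\theta\in F^\times$, the binomial $x^m-\theta$ is irreducible over $F$ if and only if (a) $\theta\notin F^p$ for every prime $p\mid m$, and (b) if $4\mid m$ then $\theta\notin -4F^4$. If one prefers not to cite this, it can be proved by induction on $m$: the prime-power case rests on the fact that a root of $x^{p^k}-\theta$ generates an extension whose degree is governed by whether $\theta$ lies in $F^p$, with the $4\mid m$ exception tracing back to the single identity $x^4+4=(x^2-2x+2)(x^2+2x+2)$, and the general case then follows by a tower argument over the prime divisors of $m$.

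First I would translate condition (a) into (i) and (ii). For a prime $p$, the $p$-th power map on the cyclic group $\Fq^\times$ has image the unique subgroup of index $\gcd(p,q-1)$, so $\theta\in\Fq^p$ precisely when $\theta^{(q-1)/\gcd(p,q-1)}=1$, that is, $o(\theta)\mid(q-1)/\gcd(p,q-1)$. If $p\nmid q-1$ then this map is a bijection, $\theta$ is automatically a $p$-th power, and (a) fails; hence (a) forces every prime divisor of $m$ to divide $q-1$. When $p\mid q-1$, writing $v_p$ for the $p$-adic valuation and using $o(\theta)\mid q-1$, the condition $\theta\notin\Fq^p$ is equivalent to $v_p(o(\theta))=v_p(q-1)$. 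Quantifying over all primes $p\mid m$, this conjunction says exactly that each such $p$ divides $o(\theta)$, that is $\operatorname{rad}(m)\mid o(\theta)$, which is (i); and that no such $p$ divides $(q-1)/o(\theta)$, that is $\gcd(m,(q-1)/o(\theta))=1$, which is (ii). Conversely (i) and (ii) together return $v_p(o(\theta))=v_p(q-1)\ge1$, hence $p\mid q-1$, for every $p\mid m$. Thus (a) is equivalent to the conjunction of (i) and (ii).

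It remains to handle (b), which I would do assuming $4\mid m$ and that (i) and (ii) already hold; in particular $q$ is odd and $v_2(o(\theta))=v_2(q-1)$. The key observation is that $(1+i)^4=-4$ where $i^2=-1$. If $q\equiv1\pmod{4}$ then $i\in\Fq$, so $-4\Fq^4=\Fq^4$; now $\theta\in\Fq^4$ would force $v_2(o(\theta))\le v_2(q-1)-2$, contradicting $v_2(o(\theta))=v_2(q-1)$, so $\theta\notin -4\Fq^4$ and (b) holds, consistent with (iii). If instead $q\equiv3\pmod{4}$ then $v_2(q-1)=1$, so $\gcd(4,q-1)=2$ and the set of fourth powers in $\Fq$ coincides with the set of squares; since $4$ is a square this gives that $-4\Fq^4$ is the set of non-squares in $\Fq^\times$, because $-1$ is a non-square; and $v_2(o(\theta))=v_2(q-1)=1$ forces $\theta$ to be a non-square, so $\theta\in -4\Fq^4$ and (b) fails. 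Hence, granted (i) and (ii), condition (b) holds if and only if $q\equiv1\pmod{4}$, which is (iii); and when $4\nmid m$ both (b) and (iii) are vacuous. Combining the three translations gives the lemma.

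I expect the $4\mid m$ case to be the only delicate point: one must keep careful track of how the fourth powers collapse onto the squares according to whether $4\mid q-1$ or not, and use the identity $(1+i)^4=-4$ to pin down $-4\Fq^4$ exactly; the remaining work is routine manipulation with valuations in the cyclic group $\Fq^\times$.
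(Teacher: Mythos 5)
Your argument is correct. Note that the paper itself offers no proof of this lemma: it is quoted verbatim from Lidl--Niederreiter (Theorem 3.75), whose proof works entirely inside the finite field, analysing the multiplicative order of a root of $x^m-\theta$ in an extension of $\Fq$ and the degree of its minimal polynomial via the order of $q$ modulo that quantity. Your route is genuinely different: you invoke the general Vahlen--Capelli criterion over an arbitrary field ($x^m-\theta$ irreducible iff $\theta\notin F^p$ for every prime $p\mid m$, and $\theta\notin-4F^4$ when $4\mid m$) and then translate both conditions into statements about the cyclic group $\Fq^\times$. The translation is carried out correctly: for $p\mid q-1$ one has $\theta\notin\Fq^p$ iff $v_p(o(\theta))=v_p(q-1)$, and the conjunction over primes $p\mid m$ is exactly $\operatorname{rad}(m)\mid o(\theta)$ together with $\gcd\left(m,\frac{q-1}{o(\theta)}\right)=1$ (with the case $p\nmid q-1$ correctly forcing failure); and, granted (i) and (ii), your case analysis of $-4\Fq^4$ via $(1+i)^4=-4$ when $q\equiv 1\pmod 4$, and via the collapse of fourth powers onto squares together with $-1$ being a non-square when $q\equiv 3\pmod 4$, correctly shows that condition (b) is equivalent to (iii); the logical reduction (handling (b) only under (i) and (ii)) is sound because irreducibility already forces (a). What your approach buys is a self-contained derivation from a field-theoretic criterion, at the price of importing Vahlen--Capelli (or its inductive proof, which you only sketch); the cited textbook proof buys a direct finite-field argument requiring no general field theory. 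Either is a legitimate substitute for the citation.
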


When $m=3$, we can combine Lemmas \ref{lem:binomialcond1}  and \ref{lem:binomialirred} to give the following.

\begin{theorem}\label{thm:binomialirredcond1}
    A cubic binomial $x^3 -\theta \in \Fqt[x]$ is irreducible and satisfies Condition (1) if and only if $q\equiv 1\mod 3$ and $3$ does not divide $\frac{q^2-1}{o(\theta)}$. 
\end{theorem}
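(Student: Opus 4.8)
The plan is to obtain the statement as a direct consequence of Lemmas~\ref{lem:binomialcond1} and~\ref{lem:binomialirred}, followed by a small arithmetic simplification. First I would read off from Lemma~\ref{lem:binomialcond1}, taken with $m=3$, that $P(x)=x^3-\theta$ satisfies Condition~(1) exactly when $\gcd(3,q+1)=1$. Since $p>3$ we have $\gcd(q,3)=1$, so $q\equiv 1$ or $q\equiv 2\pmod 3$, and $\gcd(3,q+1)=1$ is equivalent to $q\not\equiv 2\pmod 3$, i.e.\ to $q\equiv 1\pmod 3$. This disposes of the ``Condition~(1)'' clause.

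For irreducibility I would apply Lemma~\ref{lem:binomialirred} over the field $\Fqt$ --- that is, with the role of ``$q$'' played by $q^2$ --- to $x^3-\theta$. Here $\operatorname{rad}(3)=3$, and part~(iii) of that lemma is vacuous since $4\nmid 3$ (and would be automatic anyway for odd $q$). So $x^3-\theta$ is irreducible in $\Fqt[x]$ if and only if $3\mid o(\theta)$ and $\gcd\!\big(3,\tfrac{q^2-1}{o(\theta)}\big)=1$, the latter being the same as $3\nmid\tfrac{q^2-1}{o(\theta)}$.

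Combining the two, ``$x^3-\theta$ is irreducible and satisfies Condition~(1)'' is equivalent to the conjunction of $q\equiv 1\pmod 3$, $\,3\mid o(\theta)$, and $3\nmid\tfrac{q^2-1}{o(\theta)}$. The only step with any content is to notice that, once $q\equiv 1\pmod 3$ is assumed, the middle condition $3\mid o(\theta)$ is redundant: $q\equiv 1\pmod 3$ gives $3\mid q-1$ and hence $3\mid q^2-1=o(\theta)\cdot\tfrac{q^2-1}{o(\theta)}$, so $3\nmid\tfrac{q^2-1}{o(\theta)}$ already forces $3\mid o(\theta)$. Thus the three conditions collapse to $q\equiv 1\pmod 3$ together with $3\nmid\tfrac{q^2-1}{o(\theta)}$, which is precisely the assertion. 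There is no genuine obstacle; the only things to be careful about are the substitution $q\mapsto q^2$ when invoking Lemma~\ref{lem:binomialirred} over $\Fqt$ and the observation that $3\mid o(\theta)$ need not be recorded as a separate condition.
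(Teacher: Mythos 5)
Your proposal is correct and follows essentially the same route as the paper: both directions come from combining Lemma \ref{lem:binomialcond1} (with $m=3$) with Lemma \ref{lem:binomialirred} applied over $\Fqt$ (i.e.\ with $q^2$ in place of $q$), and the paper closes exactly as you do, by observing that $3\mid o(\theta)$ is forced once $q\equiv 1\mod 3$ and $3\nmid\frac{q^2-1}{o(\theta)}$ hold. The one step the paper carries out that you skip is the verification, in the sufficiency direction, that $\theta^{q+1}\ne 1$: when $\theta^{q+1}=1$ the polynomial $G_P$ is identically zero and Condition (1) fails even if $\gcd(3,q+1)=1$, a caveat that is implicit in the proof of Lemma \ref{lem:binomialcond1} (and explicit in Theorem \ref{thm:character}) though not in the lemma's literal statement, which is what you lean on. This does not break your argument, since your hypotheses rule the case out --- $3\nmid\frac{q^2-1}{o(\theta)}$ means $o(\theta)$ does not divide $(q+1)\frac{q-1}{3}$, hence $\theta^{q+1}\ne 1$, which is precisely the intermediate observation the paper records --- but a fully careful write-up should include it.
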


\begin{proof}
Suppose $x^3-\theta$ is irreducible and satisfies Condition (1). By Lemma \ref{lem:binomialcond1} we must have $q\equiv 1\mod 3$, and by Lemma \ref{lem:binomialirred} we have that $3$ does not divide $\frac{q^2-1}{o(\theta)}$. Thus the two conditions are necessary.

Suppose now that $q\equiv 1\mod 3$ and $3$ does not divide $\frac{q^2-1}{o(\theta)}$. Then $o(\theta)$ does not divide $\frac{q^2-1}{3} = (q+1)\left(\frac{q-1}{3}\right)$, and since $\frac{q-1}{3}$ is an integer, we get that $\theta^{q+1}\ne 1$. Finally since $3$ divides $q^2-1$ we must have that $3$ divides $o(\theta)$, and so $x^3-\theta$ is irreducible and satisfies Condition (1), showing that the two conditions are sufficient.
\end{proof}

\begin{remark}
    The case of binomials $x^m-\theta$ with $\theta$ a primitive element of $\Fqt$ and $m$ an odd divisor of $q-1$ corresponds Kantor's Type 4 construction. Thus we have a generalisation of this family, both in terms of new inequivalent examples when $m$ divides $q-1$, and new values of $m$. For example, this section shows that there exist irreducible binomials of degree $25$ over $\FF_{11^2}$ satisfying Condition (1), and hence new $2$-spreads of $V(50,11)$ with a cyclic transitive group of automorphisms, and new flag-transitive linear spaces.
\end{remark}

\section{Characterisation of cubics}

We are now ready to fully characterise the  irreducible cubic polynomials satisfying Condition (1). We split them into three (not necessarily non-empty) parameterised families,

\begin{theorem}\label{thm:character}
Let $P(x)=x^3-\delta x^2-\gamma x-\theta \in \Fqt[x]$ be irreducible. Then $H_P(z,w)$ is not identically zero and reducible if and only if one of the following holds:
\begin{align*}
P(x) = B_{\theta}(x) & :=x^3-\theta, ~\theta^{q+1} \ne 1;\\
P(x) = P_{\delta,\alpha}(x) &:= x^3-\delta x^2-(\delta\alpha+3\alpha^{1-q})x- (\delta\alpha^2(1-\alpha^{-(q+1)})/3+\alpha^{2-q}), ~\alpha\ne 0;\\
P(x) = Q_{\delta,\gamma}(x) &:= x^3-\delta x^2-\gamma x+\delta\gamma/9, ~\gamma^{q+1}=9.
\end{align*}
Moreover, 
\begin{itemize}
\item an irreducible of the form $B_\theta(x)$ satisfies Condition (1) if and only if $\theta^{q+1}\ne 1$ and $q \equiv 1 \mod 3$;
\item an irreducible of the form $P_{\delta,\alpha}(x)$ satisfies Condition (1) if and only if $\frac{4-\alpha^{q+1}}{3\alpha^{q+1}}$ is a nonzero square in $\Fq$, and either $\delta=0$ or $(\alpha+3\delta^{-q})^{q+1}\ne 1$;
\item an irreducible of the form $Q_{\delta,\gamma}(x)$ satisfies Condition (1) if and only if $~\gamma^{\frac{q+1}{2}}=3$.
\end{itemize}
\end{theorem}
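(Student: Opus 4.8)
The plan is to push the machinery of Section \ref{sec:cubic} to its conclusion. By Lemma \ref{lem:isreducible}, if an irreducible cubic $P(x) = x^3 - \delta x^2 - \gamma x - \theta$ satisfies Condition (1) then $H_P$ is reducible; conversely the summary Corollary after Theorem \ref{thm:HPfacC1} tells us exactly when a reducible $H_P$ corresponds to a $P$ satisfying Condition (1). So the first task is purely algebraic: determine all $(\delta,\gamma,\theta)$ for which $H_P(z,w)$ is reducible but not identically zero. By Lemma \ref{lem:HPfactor} such an $H_P$ factors as $\mu(czw+az+bw+d)(czw+bz+aw+d)$ with $a,b,c,d\in\Fqt$, $a\ne b$. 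I would expand this product, match it coefficient-by-coefficient against the explicit expression for $-H_P(z,w)$ displayed at the start of Section \ref{sec:cubic}, and solve the resulting system for $\delta,\gamma,\theta$ in terms of $a,b,c,d$ (or vice versa). The symmetry $z\leftrightarrow w$ is already built in; the six coefficient equations (the $z^2w^2$, $z^2w+zw^2$, $z^2+zw+w^2$, $zw$, $z+w$, and constant terms) overdetermine the four parameters $a,b,c,d,\mu$, and the compatibility conditions are precisely what carve out the three families $B_\theta$, $P_{\delta,\alpha}$, $Q_{\delta,\gamma}$. I expect the case split to be governed by whether $c=0$ (degree $\le 3$, forcing extra vanishing and producing the $B_\theta$ family when $\delta=\gamma=0$, and more generally relating to the $\theta^q\delta+\gamma^q=0$ sub-case already seen in Lemma \ref{lem:isreducible}) versus $c\ne 0$, and by whether the discriminant-type quantity $ab-cd$ vanishes (Lemma \ref{lem:ab=cd} then kills irreducibility, so that branch is excluded). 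Normalising by the $\GL$-action or simply by scaling $a,b,c,d$ should let me reduce the free parameters to the two shown in each family.

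Having the three families in hand, the second task is to decide, within each, when Condition (1) actually holds. Here I apply the Corollary after Theorem \ref{thm:HPfacC1}: compute $\Delta = (a^{q+1}-b^{q+1}+c^{q+1}-d^{q+1})^2 - 4(bd^q-a^qc)^{q+1}$ and the two quadratics $q_1(x) = (bd^q-a^qc)x^2+(d^{q+1}+b^{q+1}-c^{q+1}-a^{q+1})x+(b^qd-ac^q)$ and $q_2(x) = cx^2+(a+b)x+d$ in terms of the family parameters, then read off which of the three bullet alternatives (nonzero square $\Delta$; nonsquare $\Delta$ with $q_1,q_2$ proportional; $\Delta=0$ with the root condition) can occur. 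For $B_\theta$ the parameters collapse dramatically — indeed $G_P = (\theta^{q+1}-1)(w^3-z^3)$ by the binomial computation in Lemma \ref{lem:binomialcond1} — and Theorem \ref{thm:binomialirredcond1} already hands us "irreducible and Condition (1) $\iff q\equiv 1\bmod 3$ and $3\nmid(q^2-1)/o(\theta)$"; I would simply remark that for $\theta$ of order $q^2-1$ (or more generally once $x^3-\theta$ is irreducible) the condition $3\nmid(q^2-1)/o(\theta)$ is automatic, leaving $q\equiv 1\bmod 3$, matching the stated bullet. For $P_{\delta,\alpha}$ the computation should reduce $\Delta$ to a scalar multiple of $(4-\alpha^{q+1})/(3\alpha^{q+1})$ up to squares in $\Fq$, so that "$\Delta$ a nonzero square" becomes "$(4-\alpha^{q+1})/(3\alpha^{q+1})$ a nonzero square"; the side condition "$\delta=0$ or $(\alpha+3\delta^{-q})^{q+1}\ne1$" should emerge either from the $q_1,q_2$-proportionality alternative being vacuous here or from an extra degeneracy of $H_\Psi$ exactly when $(\alpha+3\delta^{-q})^{q+1}=1$ and $\delta\ne0$. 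For $Q_{\delta,\gamma}$, where $\gamma^{q+1}=9$ is already imposed, I expect $\Delta=0$ identically, putting us in the third bullet, and the root condition should simplify to $\gamma^{(q+1)/2}=3$ (i.e. choosing the "correct" square root of $9$); I would verify that $\gamma^{(q+1)/2}=-3$ gives $z=w$ and hence fails.

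The main obstacle I anticipate is the bookkeeping of the coefficient-matching system and, especially, correctly enumerating the degenerate sub-cases — when $c=0$, when $d=0$, when $bd^q-a^qc=0$ (so Lemma \ref{lem:wquadratic} does not apply and equation (\ref{eqn:wquad1}) is linear or trivial), and when $H_\Psi$ itself degenerates so that one of its "lines" passes through the forbidden configuration $cw+a=bw+d=0$ of Theorem \ref{thm:HPfacC1}. Each such degeneracy must be checked against irreducibility of $P$ via Lemma \ref{lem:ab=cd} and against Condition (1), and it is easy to double-count or miss a family. A secondary nuisance is that the parametrisations in the statement are not unique (the same $P$ may arise from different $(\delta,\alpha)$ or lie in two families at a boundary, e.g. $\delta=0$ linking $B_\theta$ and $P_{0,\alpha}$), so I would state clearly that the three families together exhaust the reducible-$H_P$ case without claiming disjointness, and handle overlaps by a short case-by-case remark. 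The $q<47$ tail left open by Lemma \ref{lem:isreducible} is already dispatched there by the Magma search, so no separate small-$q$ argument is needed for this theorem. Once the algebra is organised, each of the three bullets is a short discriminant/square computation, and the structural input — Lemmas \ref{lem:isreducible}, \ref{lem:HPfactor}, \ref{lem:ab=cd}, Theorem \ref{thm:HPfacC1} and its Corollary — does all the heavy lifting.
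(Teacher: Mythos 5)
Your plan is essentially the paper's own proof: Lemmas \ref{lem:isreducible} and \ref{lem:HPfactor} reduce to the factorisation $\mu(czw+az+bw+d)(czw+bz+aw+d)$, coefficient comparison with the displayed $-H_P$ together with a case split (the paper's being $c=0$; $c\ne 0, a+b\ne 0$; $c\ne 0, a+b=0$) carves out $B_\theta$, $P_{\delta,\alpha}$, $Q_{\delta,\gamma}$, and Condition (1) is then settled family-by-family via Theorem \ref{thm:HPfacC1} and the binomial result, exactly as you propose. A couple of your anticipated outcomes differ from what the computation actually yields (for $Q_{\delta,\gamma}$ one gets $\Delta=\left(\tfrac{-4}{27}\right)(\gamma^{(q+1)/2}+3)^2$, so $\Delta=0$ only when $\gamma^{(q+1)/2}=-3$, and for $P_{\delta,\alpha}$ the square condition on $\tfrac{4-\alpha^{q+1}}{3\alpha^{q+1}}$ comes from $a^{q+1}\ne b^{q+1}$, i.e.\ nondegeneracy of the auxiliary quadratic, rather than from $\Delta$ itself), but since your plan commits to carrying out precisely these verifications, this is a matter of execution rather than a gap.
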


\begin{proof}
We first note that the set of polynomials $\{z^2w^2,z^2w+zw^2,z^2+zw+w^2,zw,z+w,1\}$ is linearly independent in $\Fqt[z,w]$. By Lemmas \ref{lem:isreducible} and \ref{lem:HPfactor} we have that  \[
H_P(z,w)=\mu(czw +az +bw +d)(czw +bz +aw +d)
\]
for some $a,b,c,d,\mu \in \Fqt$. Thus by comparing coefficients (see the beginning of Section \ref{sec:cubic}) we see that  
\[
\begin{array}{l|ll}
\textrm{(1A)}&-(\theta^q\delta+\gamma^q)&=\mu c^2\\
\textrm{(1B)}&-(\theta\delta^q+\gamma)&=\mu d^2\\
\textrm{(2A)}&-(\theta^q\gamma+\delta^q)&=\mu c(a+b)\\
\textrm{(2B)}&-(\theta\gamma^q+\delta)&=\mu d(a+b)\\
\textrm{(3)}&1 -\theta^{q+1}&=\mu ab\\
\textrm{(4)}&\delta^{q+1}-\gamma^{q+1}&=\mu(2cd+a^2+b^2-ab)\\
\end{array}
\]

{\bf Case 1:} Assume $c=0$. Then $(\theta\delta^q + \gamma)^q = \theta^q\delta+\gamma^q=0$, and so $d=0$. Therefore $\delta = -\theta\gamma^q$, so (1A) and (3) imply that $ab = 0$ or $\gamma = 0$. If either $a=0$ or $b=0$, (3) and (4) require that $a = b = 0$, giving $H_P(z,w) \equiv 0$. Thus $\gamma =0$, which implies that $\delta =0$ and so $P(x) = x^3 -\theta=B_\theta(x)$. The binomial case is characterised in Theorem \ref{thm:binomialirredcond1}.

{\bf Case 2:} Assume $c \neq 0$ and $a+b\ne 0$. We may assume without loss of generality that $c=1$. Since $(\theta^q \delta + \gamma^q)^q = \theta \delta^q +\gamma$, equations (1A) and (1B) tell us that $\mu^{q-1} =d^2$. Since $(\theta^q \gamma +\delta^q)^q = \theta \gamma^q +\delta$, equations (2A) and (2B) give that $\mu^q (a+b)^q = \mu d(a+b)$. Thus we have $a+b = d(a+b)^q$, so $d = (a+b)^{1-q}$ and the following equations hold:
\[
\begin{array}{l|ll}
\textrm{(1A)}&-(\theta^q\delta+\gamma^q)&=\mu\\
\textrm{(1B)}&-(\theta\delta^q+\gamma)&=\mu(a+b)^{2 -2q}\\
\textrm{(2A)}&-(\theta^q\gamma+\delta^q)&=\mu(a+b)\\
\textrm{(2B)}&-(\theta\gamma^q+\delta)&=\mu(a+b)^{2-q}\\
\textrm{(3)}&1 -\theta^{q+1}&=\mu ab\\
\textrm{(4)}&\delta^{q+1}-\gamma^{q+1}&=\mu(2(a+b)^{1-q}+a^2+b^2-ab)\\
\end{array}
\]
To obtain an expression for $\theta$ in terms of $\delta, a$ and $b$, we substitute the expression for $\gamma^q$ from (1A) into (2B) to yield $\mu(\theta -(a+b)^{2-q}) = \delta(\theta^{q+1} -1)$. Replacing $\theta^{q+1} -1$ using (3) and dividing by $\mu$ we get $\theta = \delta ab +(a+b)^{2-q}$. To obtain an expression for $\gamma$, we first multiply (1A) by $\gamma$, then substitute in the expression for $\theta^q \gamma$ from (2A) to get $\mu(\gamma -\delta(a+b)) = \gamma^{q+1} -\delta^{q+1}$. Replacing the right-hand side using (4) and dividing by $\mu$, we get $\gamma = \delta(a+b) +a^2 -ab +b^2 +2(a+b)^{1-q}$.

For convenience in the remaining calculations, we define $\alpha=a+b$, and $\beta=ab$. Note that we are assuming that $\alpha\ne 0$. Then our expressions for $\gamma$ and $\theta$ become
\begin{align*}
\gamma &= \delta\alpha +\alpha^2-3\beta +2\alpha^{1-q},\\
\theta &= \delta \beta +\alpha^{2-q}.
\end{align*}

We substitute these expressions into (1A),  obtaining
\[\mu = \alpha^{q-1}(2 +\delta^q\alpha +\delta\alpha^q +\alpha^{q+1}) +\beta^q(\delta^{q+1} -3)\]
and hence from (1B) we have that
\[\alpha^{2-2q}(\alpha^{2q -2}\beta -\beta^q)(\delta^{q+1} -3) = 0.\]
Suppose $\alpha^{2q -2}\beta - \beta^q\ne 0$. Then $\delta^{q+1} =3$. Equation (2A) says that
\[\delta^q(1 -\alpha^{q+1} +(\alpha^2 +2\alpha^{1-q})\beta^q -3\beta^{q+1}) = 3\alpha(\alpha^{2q -2}\beta -\beta^q),\]
so multiplying both sides by $\delta$ and rearranging gives
\[\delta = \frac{1 -\alpha^{q+1} +(\alpha^2 +2\alpha^{1-q})\beta^q -3\beta^{q+1}}{\alpha(\alpha^{2q -2}\beta -\beta^q)}=:\frac{X}{Y},\]
where $X$ denotes the displayed numerator and $Y$ the denominator. Then $\delta^{q+1} = 3 \iff X^{q+1} -3Y^{q+1} = 0$.

Observe that $X^q = X +(\alpha +2\alpha^{-q})Y$ and $Y^q = -\alpha^{1-q}Y$. Hence
\begin{align*}
0&=X^{q+1} -3Y^{q+1}\\
\iff 0&=X^2 +(\alpha +2\alpha^{-q})XY +3\alpha^{1-q}Y^2 \\
\iff 0&=\alpha^q X^2 +(\alpha^{q+1} +2)XY +3\alpha Y^2 \\
\iff 0&=\alpha^q\left(\frac{X}{Y}\right) +\alpha^{q+1} +2 +3\alpha \left(\frac{Y}{X}\right)\\
\iff 0&=\alpha^q\left(\frac{X}{Y}\right) +\alpha^{q+1} +2 +\left(\frac{X^{q+1}}{Y^{q+1}}\right)\alpha \left(\frac{Y}{X}\right) &\\
\iff 0&=2 +\delta^q\alpha +\delta\alpha^q +\alpha^{q+1},
\end{align*}
in which case $\mu=0$, which contradicts $H_p\not\equiv 0$.

Thus we must have $\alpha^{2q -2}\beta = \beta^q$, so $Y=0$. Equation (3) states that $X^q = \delta^q Y^q = 0$,
so $X=0$ also. Hence
\begin{align*}
0 = X &= 1 -\alpha^{q+1} +(\alpha^2 +2\alpha^{1-q})\beta^q -3\beta^{q+1}\\
&=1 -\alpha^{q+1} +(\alpha^2 +2\alpha^{1-q})\alpha^{2q -2}\beta -3\alpha^{2q -2}\beta^2\\
&= (\alpha^{q-1}\beta -1)(\alpha^{q+1} -3\alpha^{q-1}\beta -1).
\end{align*}
If $\beta = \alpha^{1-q}$, then $P(x)$ has $\delta +\alpha$ as a root and so is reducible. Thus we have
\begin{align*}
\alpha^{q+1} -3\alpha^{q-1}\beta = 1\\
\iff \alpha^{q-1}(\alpha^2 -3\beta) =1\\
\iff \alpha^2 -3\beta = \alpha^{1-q}.
\end{align*}
This yields the expressions for $\gamma$ and $\theta$ which gives $P(x)=P_{\delta,\alpha}$(x).\\

We note that without loss of generality, we may assume that
\begin{subequations}
\label{eqn:abalpha}
\begin{align}
    a &=\frac{\alpha}{2}\left(1+\sqrt{\frac{4-\alpha^{q+1}}{3\alpha^{q+1}}}\right), \label{eqn:abalpha1}\\
    b &= \frac{\alpha}{2}\left(1-\sqrt{\frac{4-\alpha^{q+1}}{3\alpha^{q+1}}}\right) \label{eqn:abalpha2}.
\end{align}
\end{subequations}

Now $H_{P_{\delta,\alpha}}\equiv 0$ if and only if $\delta^{q+1}\left(\frac{\alpha^{q+1}-1}{3}\right)+\delta^q\alpha +\delta\alpha^q+3=0$, which occurs if and only if $\delta\ne 0$ and $(\alpha+3\delta^{-q})^{q+1}=1$. In this case $P_{\delta,\alpha}(x)$ does not satisfy Condition (1).

If $H_{P_{\delta,\alpha}}\not\equiv 0$ and $bd^q-a^qc\ne 0$, then the quadratic $(bd^q-a^qc)x^2+(d^{q+1}+b^{q+1}-c^{q+1}-a^{q+1})x+(b^qd-ac^q)$ is a nonzero scalar multiple of the quadratic $cx^2+(a+b)x+d$, since $(bd^q-a^qc)(a+b)=b(a+b)^q-a^q(a+b)=b^{q+1}-a^{q+1}=d^{q+1}+b^{q+1}-c^{q+1}-a^{q+1}$, and $(bd^q-a^qc)(a+b)^{1-q}= b-a^q(a+b)^{1-q}=b+b^q(a+b)^{1-q}-(a+b)^q(a+b)^{1-q}=b^q(a+b)^{1-q}-a=b^qd-ac^q$, and so by Theorem \ref{thm:HPfacC1}, $P_{\delta,\alpha}(x)$ satisfies Condition (1).

Now if $bd^q-a^qc= 0$, then the first quadratic is identically zero, and so $P_{\delta,\alpha}(x)$ does not satisfy Condition (1). This occurs if and only if $a^{q+1}=b^{q+1}$, if and only if $\frac{4-\alpha^{q+1}}{3\alpha^{q+1}}$ is zero or a nonsquare in $\Fq$.

{\bf Case 3:} Assume $c \neq 0$ and $a+b= 0$. Again we assume without loss of generality that $H_P(z,w)$ factorises as
\[\mu(zw +az +bw +d)(zw +bz +aw +d)\]
for some $\mu \in \Fqt^*$. Then the following equations hold:
\[
\begin{array}{l|ll}
\textrm{(1A)}&-(\theta^q\delta+\gamma^q)&=\mu\\
\textrm{(1B)}&-(\theta\delta^q+\gamma)&=\mu d^2\\
\textrm{(2A)}&-(\theta^q\gamma+\delta^q)&=0\\
\textrm{(2B)}&-(\theta\gamma^q+\delta)&=0\\
\textrm{(3)}&1-\theta^{q+1}&=-\mu a^2\\
\textrm{(4)}&\delta^{q+1}-\gamma^{q+1}&=\mu(2d+3a^2)\\
\end{array}
\]
From (2B), we have $\delta = -\theta\gamma^q$. Substituting this into (1) gives $-\gamma^q(\theta^{q+1} -1) = \mu$ and so $\gamma^q a^2 =1$ by (3). Hence $a^2 = \gamma^{-q}$. Equation (1B) tells us that
\begin{align*}
\theta(-\theta\gamma^q)^q +\gamma &= \mu d^2\\
\iff -\gamma(\theta^{q+1} -1) &= \mu d^2\\
\iff \gamma a^2 &= d^2\\
\iff \gamma^{1-q} &= d^2.
\end{align*}

Substituting the expression for $\delta$ into (4) gives
\begin{align*}
\gamma^{q+1} -(-\theta\gamma^q)^{q+1} &= \mu (2d +3a^2)\\
\iff -\gamma^{q+1}(\theta^{q+1} -1) &= \mu (2d +3a^2)\\
\iff \gamma^{q+1}a^2 &= 2d +3a^2\\
\iff \gamma &= 2d +3\gamma^{-q}\\
\iff d &= \frac{\gamma -3\gamma^{-q}}{2}.
\end{align*}
Squaring the last equation yields
\begin{align*}
\gamma^{1-q} = d^2 &= \frac{\gamma^2 -6\gamma^{1-q} +9\gamma^{-2q}}{4}\\
\iff \gamma^2 -10\gamma^{1-q} +9\gamma^{-2q} &= 0\\
\iff \gamma^{2(q+1)} -10\gamma^{q+1} +9 &=0 \\
\iff \gamma^{q+1} = 1 \text{ or } \gamma^{q+1} &= 9.
\end{align*}

If $\gamma^{q+1} =1$ then $\gamma = \gamma^{-q}$, so (2B) states that
\begin{align*}
\theta\gamma^q &=-\delta\\
\iff \theta\gamma^{-1} & =-\delta\\
\iff \theta & = -\delta\gamma.
\end{align*}
The polynomial $P(x) = x^3 -\delta x^2 -\gamma x +\delta\gamma$ has $\delta$ as a root and is hence reducible, so we must have $\gamma^{q+1} =9$.\\

If $\gamma^{q+1} =9$ then $\gamma = 9\gamma^{-q}$, so $d^2 = \gamma^2/9$ and $d = \pm \gamma/3$. If $d = \gamma/3$, equations (1A)...(4) hold. If $d = -\gamma/3$, we arrive at a contradiction in (4) with $1 = -3$. We now have $P(x) = x^3 -\delta x^2 -\gamma x +\delta\gamma/9$, where $\gamma^{q+1} = 9$, $a=-b$ and $d = \gamma/3$.

By Theorem \ref{thm:HPfacC1}, there exist $z,w\in \Fqt$ such that $H_P(z,w)=0$ and $z^{q+1}=w^{q+1}=1$ if and only if
\begin{align*}
\Delta &= (a^{q+1}-b^{q+1}+c^{q+1}-d^{q+1})^2-4(bd^q-a^qc)^{q+1}\\
&=\left(\frac{-4}{27}\right)(\gamma^{(q+1)/2} +3)^2
\end{align*}
is zero or a nonsquare in $\mathbb{F}_q$. Since $\gamma^{q+1} =9$, $\gamma^{(q+1)/2} =\pm 3$. Hence
\[\Delta = \begin{cases}
    \frac{16}{-3} = \frac{4^2}{-3},& \text{if } \gamma^{(q+1)/2} = 3\\
    0,& \text{if } \gamma^{(q+1)/2} = -3.
\end{cases}\]
When $\gamma^{(q+1)/2} =-3$, the first quadratic in the statement of Theorem \ref{thm:HPfacC1} is identically zero, and so Condition (1) is never satisfied. When $\gamma^{(q+1)/2} =3$ and $q \equiv 2 \mod 3$, $\Delta$ is a nonzero nonsquare. The two quadratics in the statement of Theorem \ref{thm:HPfacC1} are 
$-(a\gamma^q/3+a^q)x^2-(a\gamma^q/3+a^q)^q$ and $x^2+\gamma/3$ respectively. These are scalar multiples of each other, since $(a\gamma^q/3+a^q)\gamma/3 = a+a^q\gamma/3 = (a\gamma^q/3 +a^q)^q$, and hence Condition (1) is always satisfied.

When $\gamma^{(q+1)/2} =3$ and $q \equiv 1 \mod 3$, $\Delta$ is a nonzero square, and hence Condition (1) is satisfied.
\end{proof}

\section{Classification of cubics}

In this section we determine the number and the nature of the equivalence classes of irreducible cubics satisfying Condition (1). We begin by enumerating the irreducible cubics satisfying Condition (1), and subsequently find representatives for each equivalence class.

\subsection{Enumeration}
We first need some technical lemmas which will enable us to perform the desired enumeration. To start, we introduce the following characterisation of irreducible cubic polynomials of Dickson \cite{Dickson}.

\begin{lemma}\label{lem:dickson}
The cubic $x^3 +sx +t = 0 \in \Fq[x]$ is irreducible over $\Fq$ if and only if the following two conditions hold:
\begin{itemize}
    \item $R := -4s^3 -27t^2$ is a nonzero square in $\Fq$; 
    \item $S := (-t +\mu\sqrt{-3})/2$ is a noncube in $\Fq\left(\sqrt{-3}\right)$, where $R = 81\mu^2$.
\end{itemize}
Moreover, if $R$ is a (not necessarily nonzero) square in $\Fq$, then this cubic has either zero or three roots in $\Fq$.
\end{lemma}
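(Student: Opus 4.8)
The plan is to prove Dickson's classical criterion for irreducibility of depressed cubics over $\Fq$ (with $p>3$), and then record the "zero or three roots" dichotomy as an easy consequence of the Galois-theoretic setup. I would phrase everything in terms of the splitting field and the Galois group, since for a separable cubic over a finite field the group is forced to be either trivial or cyclic of order $3$.

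First I would set up the standard reduction. Let $f(x)=x^3+sx+t\in\Fq[x]$, which is separable since $\ch\Fq=p>3$ and $f$ is irreducible iff it has no root in $\Fq$; its Galois group $G$ over $\Fq$ is a subgroup of the cyclic group $\Aut(\overline{\Fq}/\Fq)$, hence cyclic, and is a subgroup of $S_3$ acting on the three roots. Thus $G$ has order $1$ or $3$: if $f$ is reducible it either splits completely ($G$ trivial) or has exactly one root in $\Fq$ (a linear times an irreducible quadratic factor); if $f$ is irreducible then $G$ is cyclic of order $3$. The discriminant of $f$ is $\mathrm{disc}(f)=-4s^3-27t^2=R$, and a standard fact is that $G\le A_3$ iff $R$ is a square in $\Fq$. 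Since over a finite field the only transitive subgroup of $S_3$ not contained in $A_3$ is all of $S_3$ (order $6$), which is non-cyclic and hence impossible here, we conclude: $f$ has a root in $\Fq$ whenever $R$ is a nonsquare; and if $R$ is a square, then $f$ either splits completely or is irreducible, i.e. has zero or three roots in $\Fq$. This already gives the last sentence of the lemma, and reduces the main equivalence to the case $R$ a nonzero square, where we must distinguish "splits completely" from "irreducible."

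For that distinction I would use the resolvent / Lagrange-resolvent construction, working over $K:=\Fq(\sqrt{-3})$ (note $\sqrt{-3}\in\Fq$ iff $q\equiv 1\bmod 3$, but the argument is uniform if we just adjoin it). Writing $R=81\mu^2$ so that $\mu\sqrt{-3}$ is a fixed square root of $R$ up to sign, Cardano's formulas express the roots of $f$ via cube roots of $S:=(-t+\mu\sqrt{-3})/2$ and $\overline{S}:=(-t-\mu\sqrt{-3})/2$, with $S\overline{S}=(t^2+3\mu^2\cdot(-1)\cdot(-1))/4$... more precisely $S\overline{S}=(t^2 - \mu^2(-3))/4 = (27t^2+R)/108 = -s^3/27$ after substituting $R=-4s^3-27t^2$, so $S\overline{S}=(-s/3)^3$ is a cube in $K$. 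Hence $S$ is a cube in $K$ iff $\overline{S}$ is, and in that case the three cube roots of $S$ (scaled appropriately) produce three roots of $f$ in $K$; combined with $R$ being a square one checks these roots in fact lie in $\Fq$, so $f$ splits over $\Fq$. Conversely if $f$ splits over $\Fq$ then its roots are in $\Fq\subseteq K$ and unwinding Cardano shows $S$ is a cube in $K$. Therefore, given $R$ a nonzero square: $f$ is irreducible over $\Fq$ iff $S$ is a noncube in $K$. Assembling the two cases proves the stated equivalence.

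The main obstacle — and the step I would be most careful about — is the characteristic-independent bookkeeping in the Lagrange-resolvent step: one must verify that the cube roots $u,v$ of $S,\overline{S}$ can be chosen with $uv=-s/3\in\Fq$ (not merely $uv\in K$), that the three roots $u+v,\,\omega u+\omega^2 v,\,\omega^2 u+\omega v$ are then genuinely the roots of $f$, and that when $R$ is a square these lie in $\Fq$ rather than in a genuine extension. A clean way to handle this, which I would adopt, is to argue purely group-theoretically: $f$ is irreducible iff $[\Fq(\varepsilon):\Fq]=3$ for a root $\varepsilon$, iff the Frobenius acts as a $3$-cycle on the roots, iff the fixed field of $\langle\mathrm{Frob}\rangle$ acting on the Kummer extension $K(S^{1/3})/K$ is nontrivial in the expected way — equivalently, since $[K(S^{1/3}):K]\in\{1,3\}$ is exactly $3$ iff $S$ is a noncube in $K$, and one shows $\Fq(\varepsilon)=$ the degree-$3$ subextension precisely under the square/noncube hypotheses. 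I would cite \cite{Dickson} and \cite{lidlniederreiter} for the precise classical statement and only sketch the resolvent computation, since the arithmetic is routine but lengthy, flagging that $p\neq 2,3$ is exactly what makes $2$, $3$, and $\sqrt{-3}$ behave, so that Cardano's formulas and the discriminant criterion are valid.
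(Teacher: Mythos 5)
A preliminary remark: the paper does not prove this lemma at all --- it is stated as a classical characterisation due to Dickson \cite{Dickson} and used as a black box --- so there is no internal proof to compare yours against; your proposal has to stand on its own. The route you choose (parity of the Frobenius action on the roots, detected by the discriminant, for the ``zero or three roots'' clause; Cardano/Kummer theory over $K=\Fq(\sqrt{-3})$ for the cube criterion) is the standard one and is correct in outline, and the computations you actually carry out are right: $S\overline{S}=(-s/3)^3$, and a transposition in the Galois group is incompatible with $R$ being a square.

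However, as written the proposal is a sketch with the decisive step missing. The entire content of the lemma is the biconditional ``for $R$ a nonzero square, $f$ is irreducible iff $S$ is a noncube in $K$'', and you dispatch both directions with ``one checks''/``one shows'' and finally fall back on citing Dickson and Lidl--Niederreiter; that is legitimate (it is exactly what the paper does), but then the sketch adds nothing, and if you intend a proof the missing steps must be written out. They are all short: if $S=u^3$ with $u\in K$, set $v:=-s/(3u)$, check $v^3=\overline{S}$ using $S\overline{S}=(-s/3)^3$, so $u+v$, $\omega u+\omega^2 v$, $\omega^2 u+\omega v$ are the roots of $f$ and lie in $K$; when $q\equiv 2\pmod 3$ this forces reducibility, since an irreducible cubic has its roots in $\Fqth\not\subseteq\Fqt=K$, and the discriminant dichotomy then upgrades ``reducible'' to ``splits over $\Fq$''. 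Conversely, if the roots $\alpha_i$ lie in $\Fq$, the resolvents $u=\tfrac13(\alpha_1+\omega^2\alpha_2+\omega\alpha_3)$ and $v=\tfrac13(\alpha_1+\omega\alpha_2+\omega^2\alpha_3)$ lie in $K$ and satisfy $uv=-s/3$ and $u^3+v^3=-t$, whence $\{u^3,v^3\}=\{S,\overline{S}\}$ and $S$ is a cube in $K$. Two further genuine slips: your blanket claim that $f$ is separable because $p>3$ is false precisely when $R=0$, yet the ``moreover'' clause explicitly includes $R=0$; the parity argument does not apply there, and you need the one-line observation that the repeated root equals $-3t/(2s)\in\Fq$ (or $0$ when $s=t=0$), so $f$ splits over $\Fq$ in that case. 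Finally, your step ``$S$ is a cube iff $\overline{S}$ is'' silently assumes $S\overline{S}\ne 0$, i.e.\ $s\ne 0$; when $s=0$ the two sign choices for $\mu$ give $S=0$ or $S=-t$, and only the latter makes the criterion correct for $x^3+t$, so a careful write-up should either treat binomials directly or fix the sign convention for $\mu$.
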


We apply this result to the polynomials $P_{\delta,\alpha}(x)$ to obtain useful criteria towards counting irreducible polynomials of this form satisfying Condition (1).

\begin{lemma}\label{lem:pdadisc}
A polynomial of the form $P_{\delta,\alpha}(x)\in \Fqt[x]$ is either irreducible or has all three of its roots in $\Fqt$. Furthermore, it is reducible if and only if at least one of the following holds:
\begin{itemize}
    \item $\alpha^{q+1} =4;$
    \item $\delta = \frac{-3\alpha}{2} \left(1 + \sqrt{1 -4\alpha^{-(q+1)}}\right);$
    \item $\delta = \frac{-3\alpha}{2} \left(1 +\frac{\kappa^3 +1}{\kappa^3 -1} \sqrt{1 -4\alpha^{-(q+1)}}\right),$
\end{itemize}
for some $\kappa \in \F_{q^2}$.
\end{lemma}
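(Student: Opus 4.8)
The plan is to apply Dickson's criterion (Lemma~\ref{lem:dickson}) to the depressed cubic obtained from $P_{\delta,\alpha}(x)$ by the substitution $x \mapsto x + \delta/3$, and then to translate the resulting discriminant condition $R$ being a nonzero square back into an explicit condition on $\delta$ and $\alpha$. First I would compute the coefficients of the depressed cubic $x^3 + sx + t$ where
\[
s = -\gamma - \frac{\delta^2}{3}, \qquad t = -\theta - \frac{\delta\gamma}{3} - \frac{2\delta^3}{27},
\]
with $\gamma = \delta\alpha + 3\alpha^{1-q}$ and $\theta = \delta\alpha^2(1-\alpha^{-(q+1)})/3 + \alpha^{2-q}$ read off from the definition of $P_{\delta,\alpha}(x)$. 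Since Lemma~\ref{lem:dickson} tells us that whenever $R = -4s^3 - 27t^2$ is a square the cubic has either zero or three roots in $\Fqt$, the dichotomy ``irreducible or totally split'' is immediate once we establish that $R$ is always a square in $\Fqt$; and in fact we expect $R$ to be an explicit perfect square of an element of $\Fqt$, so that the cubic is reducible precisely when $R = 0$ \emph{or} when the noncube condition on $S$ fails.

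The key computation is therefore to show that $R$, as a polynomial in $\delta$ (with coefficients involving $\alpha, \alpha^q$), factors as a square. I expect, after substituting the expressions for $\gamma$ and $\theta$, that $-4s^3 - 27t^2$ simplifies dramatically — the parameterisation $P_{\delta,\alpha}$ was built precisely so that $H_P$ factors nicely, and this rigidity should force $R$ to be of the form (square in $\Fqt$)$\cdot(\text{quadratic in }\delta)^2$ or similar. Concretely I anticipate
\[
R = \Big(\text{some explicit element of }\Fqt\Big)^2 \cdot \big(\delta^2 + c_1\delta + c_0\big)^2
\]
for explicit $c_0, c_1$ depending on $\alpha,\alpha^q,\alpha^{q+1}$, so that $R$ is automatically a square, $R = 0$ corresponds to $\alpha^{q+1} = 4$ (giving the first bullet, via the vanishing of the scalar factor) or to $\delta$ being a root of the quadratic $\delta^2 + c_1\delta + c_0$ (and solving this quadratic by the usual formula should yield exactly the second bullet, $\delta = \frac{-3\alpha}{2}(1 \pm \sqrt{1 - 4\alpha^{-(q+1)}})$ — note the two signs collapse to one case by the ``$a$ and $b$ interchangeable'' convention). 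This establishes the first two reducibility conditions and the ``irreducible or totally split'' alternative.

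For the third bullet I would invoke the noncube part of Lemma~\ref{lem:dickson}: the cubic (assuming $R \ne 0$) is reducible if and only if $S = (-t + \mu\sqrt{-3})/2$ \emph{is} a cube in $\Fqt(\sqrt{-3})$, where $R = 81\mu^2$. Writing this ``$S$ is a cube'' condition as $S = \kappa^3$ for some $\kappa$ and also using the conjugate relation (the product of $S$ with its $\sqrt{-3}$-conjugate is $t^2/4 + 3\mu^2/4 \cdot(-1)\cdot$, i.e. essentially $-4s^3/27$ up to the known constant), I would solve for $\delta$ in terms of $\kappa$. The ratio $\frac{\kappa^3+1}{\kappa^3-1}$ appearing in the statement strongly suggests that after clearing denominators the condition reads $\frac{S}{\bar S}$ or $\frac{2S + \mu\sqrt{-3} \cdot(\text{something})}{\cdots}$ equals a perfect cube, which rearranges to give $\delta$ linear in $\frac{\kappa^3+1}{\kappa^3-1}$ against the square root $\sqrt{1-4\alpha^{-(q+1)}}$, matching the displayed formula. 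The main obstacle is purely computational: carrying the substitution $x \mapsto x+\delta/3$ through the messy expressions for $\gamma$ and $\theta$ and verifying that $R$ collapses to a square, and then disentangling the cube condition on $S$ into the clean parametric form with $\kappa$. I do not foresee a conceptual difficulty, only the need for careful (and likely machine-assisted) algebra; once $R$ is shown to be a square the structural claims follow directly from Lemma~\ref{lem:dickson}.
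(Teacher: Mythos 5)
Your plan coincides essentially step for step with the paper's proof: it performs the same shift $x=y+\delta/3$, computes $R=\frac{-\alpha^{1-q}}{3}(\alpha^{q+1}-4)(\delta^2+3\alpha\delta+9\alpha^{1-q})^2$, which is automatically a square in $\Fqt$ (every element of $\Fq$ is a square there), so that $R=0$ gives exactly $\alpha^{q+1}=4$ or $\delta=\delta_{\pm}:=-\frac{3\alpha}{2}(1\pm r)$ with $r=\sqrt{1-4\alpha^{-(q+1)}}$, and then reduces the cube condition on $S$ via the factorisation $S=\frac{\delta-\delta_{\mp}}{\delta-\delta_{\pm}}\left(\frac{\delta-\delta_{\pm}}{3}\right)^3$ to the ratio $\frac{\delta-\delta_-}{\delta-\delta_+}$ being a cube $\kappa^3$, which is solved linearly for $\delta$ to give the third bullet. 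The only inaccuracy is your aside that the two signs in $\delta_{\pm}$ ``collapse by the $a,b$ interchangeability'': the reason only the $+$ root is listed is that $\delta_-$ is the $\kappa=0$ instance of the third bullet (and one should also note, as the paper does, that the degenerate case $\kappa^3=1$ forces $\alpha^{q+1}=4$); this is a cosmetic point that does not affect the argument.
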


\begin{proof}
We first perform a change of variables in order to apply Lemma \ref{lem:dickson}. Let $x = y+\delta/3$. Then
$P_{\delta,\alpha}(x) = y^3 +sy +t$,
where
\[s = -(3\alpha^{1-q} +\delta\alpha +\delta^2/3); \quad
t = -(3\alpha +2\delta)(9\alpha^{1-q} +3\alpha\delta +\delta^2)/27.\]
Using the notation of Lemma \ref{lem:dickson},
\[R = \frac{-\alpha^{1-q}}{3}(\alpha^{q+1} -4)(9\alpha^{1-q} +3\alpha\delta +\delta^2)^2\]
Hence $R$ is always a square in $\Fqt$, and thus by Lemma \ref{lem:dickson} the first claim holds.

For convenience, define $r := \sqrt{1 -4\alpha^{-(q+1)}}.$ Then it is clear that $R$ is zero if and only if $\alpha^{q+1} =4$ or
\[\delta = \delta_{\pm} := -\frac{3\alpha}{2}(1 \pm r)\]
Now
\[S = \frac{(\delta -\delta_\pm)^2(\delta-\delta_\mp)}{27} = \frac{\delta -\delta_\mp}{\delta-\delta_\pm}\left(\frac{\delta -\delta_\pm}{3}\right)^3.\]
Hence $S$ is a cube if and only if
\[\frac{\delta-\delta_-}{\delta-\delta_+}\]
is a cube. Suppose $\frac{\delta-\delta_-}{\delta-\delta_+} = \kappa^3$ for some $\kappa \in \Fqt$. If $\kappa^3 =1$, then $r=0$ and so $\alpha^{q+1}=4$. If $\kappa^3 \ne 1$, then
\[\delta = \frac{(\delta_-) -(\delta_+)\kappa^3}{1-\kappa^3} = \frac{-3\alpha}{2}\left(1 +\frac{\kappa^3 +1}{\kappa^3 -1}r\right),\]
completing the proof.
\end{proof}

We saw in Theorem \ref{thm:character} that the case where $(\alpha+3\delta^{-q})^{q+1} =1$ appears to require special attention. We show now that in this case, a polynomial satisfying Condition (1) is reducible, and so can be disregarded.

\begin{lemma}\label{lem:pdared}
If $(\alpha +3\delta^{-q})^{q+1} =1$ and $(4\alpha^{-(q+1)}-1)/3$ is a nonzero square in $\F_q$ then $P_{\delta,\alpha}(x) \in \Fqt[x]$ is reducible.
\end{lemma}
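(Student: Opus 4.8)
The strategy is to show that when $(\alpha+3\delta^{-q})^{q+1}=1$, the polynomial $P_{\delta,\alpha}(x)$ actually falls into one of the reducible families identified in Lemma \ref{lem:pdadisc}. First I would write out the hypothesis $(\alpha+3\delta^{-q})^{q+1}=1$ explicitly: expanding, this is $(\alpha+3\delta^{-q})(\alpha^q+3\delta^{-1})=1$, i.e. $\alpha^{q+1}+3\alpha\delta^{-1}+3\alpha^q\delta^{-q}+9\delta^{-(q+1)}=1$. The aim is to massage this into an algebraic relation between $\delta$ and $\alpha$ that matches the second or third bullet of Lemma \ref{lem:pdadisc}. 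Recall from the proof of Theorem \ref{thm:character} that this same condition was exactly the one characterising $H_{P_{\delta,\alpha}}\equiv 0$, namely $\delta^{q+1}\left(\frac{\alpha^{q+1}-1}{3}\right)+\delta^q\alpha+\delta\alpha^q+3=0$; I would work from whichever of the two equivalent forms is more convenient.

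Next I would clear denominators to obtain a polynomial identity. Multiplying the hypothesis through by $\delta^{q+1}$ gives a relation of the shape $\left(\frac{\alpha^{q+1}-1}{3}\right)\delta^{q+1}+\alpha\delta^q+\alpha^q\delta+3=0$ (up to the constant factor $3$). Since $(4\alpha^{-(q+1)}-1)/3$ is a nonzero square in $\Fq$, set $r:=\sqrt{1-4\alpha^{-(q+1)}}$ as in Lemma \ref{lem:pdadisc}; note $r\in\Fqt$ and indeed $r^2=1-4\alpha^{-(q+1)}\in\Fq^\times$ (up to the scaling by $\alpha^{q+1}$, which must be tracked carefully — the hypothesis says $(4\alpha^{-(q+1)}-1)/3$ is a square, and I would pin down the precise relation to $R$ from the proof of Lemma \ref{lem:pdadisc}, where $R = \frac{-\alpha^{1-q}}{3}(\alpha^{q+1}-4)(9\alpha^{1-q}+3\alpha\delta+\delta^2)^2$). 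The plan is then to solve the displayed relation for $\delta$ as a root of an appropriate quadratic (it is linear in $\delta^q$ and linear in $\delta$, so applying the Frobenius $q$-power and eliminating should yield a quadratic over $\Fq$ in a suitable normalised variable), and to check that the resulting value of $\delta$ is precisely of the form $\delta_{\pm}=-\frac{3\alpha}{2}(1\pm r)$ or of the third form $\frac{-3\alpha}{2}\left(1+\frac{\kappa^3+1}{\kappa^3-1}r\right)$ for some $\kappa\in\Fqt$. In the former case $R=0$ and $P_{\delta,\alpha}$ has a repeated root hence is reducible; in the latter case $S$ is a cube and Lemma \ref{lem:dickson} gives reducibility (via Lemma \ref{lem:pdadisc}).

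\textbf{Main obstacle.} The delicate part will be the bookkeeping that shows the specific $\delta$ forced by $(\alpha+3\delta^{-q})^{q+1}=1$ is one of the excluded values, rather than landing in the generic (irreducible) case. Concretely: the hypothesis is one $\Fq$-equation (it is Frobenius-invariant, so genuinely a condition over $\Fq$ once one normalises, say, by $\alpha$), and Lemma \ref{lem:pdadisc}'s reducibility locus is a union of curves; one must check that the former curve is contained in the latter. I expect this to come down to writing $u:=\delta/\alpha$ (or $u:=\alpha^{q-1}\delta$ to land in $\Fqt$ with controlled Frobenius action) and verifying that the quadratic satisfied by $u$ coincides with $u^2+3u+\frac{9}{\alpha^{q+1}}\cdot(\text{something})=0$, whose roots are exactly $\delta_{\pm}/\alpha$ up to the square-root ambiguity captured by $r$. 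If the quadratic turns out to have its two roots equal to $\delta_+/\alpha$ and $\delta_-/\alpha$ we are immediately in the first reducible case; if instead it is a different quadratic, I would express its roots using $r$ and exhibit the corresponding $\kappa$ with $\kappa^3=\frac{\delta-\delta_-}{\delta-\delta_+}$, checking $\kappa\in\Fqt$ — this cube-root existence check is the one genuinely non-formal step, and I would handle it by noting that $\frac{\delta-\delta_-}{\delta-\delta_+}$ turns out to be a specific norm-one or otherwise manifestly-cube element of $\Fqt$, using the structure of the hypothesis equation. Once the membership in the reducible locus is established, reducibility follows directly from Lemma \ref{lem:pdadisc}, completing the proof.
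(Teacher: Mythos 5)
Your main route breaks down at its central step. After clearing denominators the hypothesis $(\alpha+3\delta^{-q})^{q+1}=1$ reads $(\alpha^{q+1}-1)\delta^{q+1}+3\alpha\delta^{q}+3\alpha^{q}\delta+9=0$, which is not linear in $\delta$ and $\delta^q$ (it has the bilinear term $\delta^{q+1}=\delta\cdot\delta^{q}$) and, as you yourself note later, is invariant under the $q$-power Frobenius: raising it to the $q$ and using $\delta^{q^2}=\delta$ returns the same equation, so there is nothing to eliminate and no quadratic over $\Fq$ emerges. Indeed, writing $y=\alpha+3\delta^{-q}$ the hypothesis is $y^{q+1}=1$, so for a fixed admissible $\alpha$ it has up to $q+1$ solutions $\delta\in\Fqt$ (this is exactly the count the paper exploits); such a set cannot be the root set of a quadratic, and generically these $\delta$ are \emph{not} the two values $\delta_{\pm}=-\tfrac{3\alpha}{2}(1\pm r)$ with $R=0$ but lie in the third ($\kappa$) branch of Lemma \ref{lem:pdadisc}. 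So the plan ``solve a quadratic and recognise its roots as $\delta_\pm$'' cannot be executed.

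Your fallback --- show directly that $N:=\frac{\delta-\delta_-}{\delta-\delta_+}$ is a cube in $\Fqt$ --- is the right direct route, but it is precisely the substance of the lemma and you leave it as an unverified expectation. To complete it one substitutes $\delta=\frac{\delta_- -N\delta_+}{1-N}$ into the norm equation and checks that the hypothesis is equivalent to $N^{q+1}=\bigl(\tfrac{1-r}{1+r}\bigr)^{3}$ when $q\equiv 1\pmod 3$ (where the squareness hypothesis forces $r\in\Fq$) and to $N^{q-1}=\bigl(\tfrac{1+r}{1-r}\bigr)^{3}$ when $q\equiv 2\pmod 3$ (where $r^q=-r$); from these, $N^{(q^2-1)/3}=(N^{q+1})^{(q-1)/3}=\bigl(\tfrac{1-r}{1+r}\bigr)^{q-1}=1$, respectively $N^{(q^2-1)/3}=(N^{q-1})^{(q+1)/3}=1$, so $N$ is a cube and Lemma \ref{lem:pdadisc} gives reducibility, the degenerate values $\delta=\delta_\pm$ being reducible outright via $R=0$. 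None of this appears in your plan --- in particular the case split on whether $r\in\Fq$, which is exactly where the hypothesis that $(4\alpha^{-(q+1)}-1)/3$ is a nonzero square enters. For comparison, the paper does not prove the containment directly: it bounds the number of solutions of the norm equation by $q+1$ (or $q$ when $\alpha^{q+1}=1$) and exhibits that many distinct members $\delta_\kappa$ of the reducibility locus (together with $\delta_\pm$ when $q\equiv 2\pmod 3$) satisfying it, concluding by counting. Your fallback, once the computation above is actually carried out, would be a legitimate alternative argument; as written, the decisive verification is missing and the primary strategy is unworkable.
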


\begin{proof}
Let $(4\alpha^{-(q+1)}-1)/3 = \lambda^2$ for some $\lambda \in \F^*_q$ and let $r = \sqrt{1-4\alpha^{-(q+1)}}$. Then $r = \sqrt{-3} \lambda \in \F_q \iff \sqrt{-3} \in \F_q \iff q \equiv 1 \mod 3$. We also note that $r \ne \pm 1$ since $\alpha \ne 0$. We claim that any $\delta$ satisfying $(\alpha +3\delta^{-q})^{q+1} =1$ is of the form listed in Lemma \ref{lem:pdadisc}. There are at most $q+1$ such $\delta$ when $\alpha^{q+1} \ne 1$ and at most $q$ otherwise. Define \[\delta_\kappa := \frac{-3\alpha}{2} \left(1 +\frac{\kappa^3 +1}{\kappa^3 -1} r\right),\]
where $\kappa \in \Fqt$ and $\kappa^3 \ne 1$.

We first suppose $q \equiv 1 \mod 3$. Then $(\alpha +3\delta_\kappa^{-q})^{q+1} =1 \iff \kappa^{3(q+1)}(r +1)^3 +(r-1)^3 =0$. For each $r$, there exist $q+1$ elements $\kappa \in \F_{q^2}$ such that
\[\kappa^{q+1} = \frac{1-r}{1+r}\]
since
\[\left(\kappa^{q+1}\right)^{q-1} = 1 = \left(\frac{1-r}{1+r}\right)^{q-1}.\]
Note that $\delta_\kappa = \delta_\iota$ if and only if $\kappa^3 = \iota^3$. Since $\kappa^{q+1}=\iota^{q+1}$ and $q \equiv 1 \mod 3$, the $q+1$ values of $\kappa$ such that $\kappa^{q+1} = \frac{1-r}{1+r}$ give $q+1$ distinct solutions $\delta = \delta_\kappa$ to $(a +3\delta^{-q})^{q+1}$, provided $\kappa^3 \ne 1$. If $\kappa^3 =1$, then
\[\frac{1-r}{1+r} = \kappa^{q+1} = \kappa^2(\kappa^3)^{(q-1)/3} = \kappa^2\]
and so
\[1 = \kappa^3 = \frac{1-r}{1+r}\kappa \implies \kappa = \frac{1+r}{1-r}.\]
It follows that $r^2 =-3$, which occurs if and only if $\alpha^{q+1} =1$, in which case $(\alpha +3\delta_\kappa^{-q})^{q+1} =1 \iff \kappa^{3(q+1)} =1$. Hence when $r = \sqrt{-3}$, the $q$ values of $\kappa$ such that $\kappa^{q+1} =1$ and $\kappa^3 \ne 1$ give $q$ distinct solutions $\delta = \delta_\kappa$ to $(a +3\delta^{-q})^{q+1}$.

Now suppose $q \equiv 2 \mod 3$. Then $(\alpha +3\delta_\kappa^{-q})^{q+1} =1 \iff \kappa^3(\kappa^{3(q-1)}(r -1)^3 +(r+1)^3) =0$. Since $r^q=-r$, we have $\left(\frac{1+r}{1-r}\right)^{q+1}=1$, and so there exist $q-1$ elements $\kappa \in \F_{q^2}$ such that $\kappa^{q-1} = \frac{1+r}{1-r}$. Note again that $\delta_\kappa = \delta_\iota$ if and only if $\kappa^3 = \iota^3$. Since $\kappa^{q-1}=\iota^{q-1}$ and $q \equiv 2 \mod 3$, the $q-1$ values of $\kappa$ such that $\kappa^{q-1} = \frac{1+r}{1-r}$ give $q-1$ distinct solutions $\delta = \delta_\kappa$ to $(a +3\delta^{-q})^{q+1}$, provided $\kappa^3 \ne 1$. If $\kappa^3 =1$, then
\[\frac{1+r}{1-r} = \kappa^{q-1} = \kappa(\kappa^3)^{(q-2)/3} = \kappa.\] It follows that $r^2 =-3$, which occurs if and only if $\alpha^{q+1} =1$, in which case $(\alpha +3\delta_\kappa^{-q})^{q+1} =1 \iff \kappa^3(\kappa^{3(q-1)}-1) =0$. Hence when $r = \sqrt{-3}$, the $q-2$ values of $\kappa$ such that $\kappa^{q-1} =1$ and $\kappa^3 \ne 1$ give $q-2$ distinct solutions $\delta = \delta_\kappa$ to $(a +3\delta^{-q})^{q+1}$.

The remaining two solutions to $(\alpha +3\delta^{-q})^{q+1} = 1$ for both the case in which $\alpha^{q+1} \ne 1$ and the case in which $\alpha^{q+1} = 1$ are given by $\delta = \delta_0$ and $\delta = \frac{-3\alpha}{2}(1+r)$.

Thus the claim holds and hence $P_{\delta,\alpha}(x)$ is reducible.
\end{proof}

Next we determine precisely when different values of $(\delta,\alpha)$ define the same polynomial $P_{\delta,\alpha}(x)$.

\begin{lemma}\label{lem:pdadistinct}
Suppose $P_{\delta,\alpha}(x)=P_{\delta',A}(x)$ for $(\delta,\alpha)\ne (\delta',A)$. Then $P_{\delta,\alpha}(x)=(x-\delta/3)^3$.
\end{lemma}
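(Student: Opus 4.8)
The plan is to argue purely by comparing coefficients, exploiting the explicit shape of $P_{\delta,\alpha}(x)$. First I would rewrite
\[
P_{\delta,\alpha}(x) = x^3 - \delta x^2 - \bigl(\delta\alpha + 3\alpha^{1-q}\bigr)x - \Bigl(\tfrac{1}{3}\delta(\alpha^2 - \alpha^{1-q}) + \alpha^{2-q}\Bigr),
\]
using $\alpha^2\bigl(1-\alpha^{-(q+1)}\bigr) = \alpha^2 - \alpha^{1-q}$. Equating the coefficients of $x^2$ in $P_{\delta,\alpha}$ and $P_{\delta',A}$ forces $\delta = \delta'$ at once, so since $(\delta,\alpha)\neq(\delta',A)$ we may assume $\delta'=\delta$ and $\alpha\neq A$; everything then comes down to the remaining two coefficient equations.

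Next I would introduce $u := \alpha^{1-q}$, $v := A^{1-q}$ and $m := \alpha - A \neq 0$. Comparing coefficients of $x$ gives the linear relation $\delta m = 3(v-u)$. Comparing constant terms, clearing denominators, and using $\alpha^{2-q} = \alpha u$, $A^{2-q}=Av$ gives $\delta(\alpha^2 - A^2) - \delta(u - v) + 3(\alpha u - Av) = 0$; substituting $\delta(\alpha^2-A^2) = \delta m(\alpha+A) = 3(v-u)(\alpha+A)$ and $\delta(u-v) = -\delta^2 m/3$ from the linear relation, and cancelling, collapses this to $9(v\alpha - uA) + \delta^2 m = 0$. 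Now I eliminate: the linear relation gives $v = u + \delta m/3$, hence $v\alpha - uA = (u+\delta m/3)\alpha - u(\alpha - m) = um + \tfrac{1}{3}\delta m\alpha$, and feeding this in and dividing by $m \neq 0$ yields the single identity
\[
\delta^2 + 3\delta\alpha + 9\alpha^{1-q} = 0.
\]

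It remains to interpret this identity. Under the substitution $x = y + \delta/3$ used in the proof of Lemma \ref{lem:pdadisc} one has $P_{\delta,\alpha}(x) = y^3 + sy + t$ with $s = -\tfrac{1}{3}\bigl(9\alpha^{1-q} + 3\delta\alpha + \delta^2\bigr)$ and $t = -\tfrac{1}{27}(3\alpha + 2\delta)\bigl(9\alpha^{1-q} + 3\alpha\delta + \delta^2\bigr)$. The identity above says exactly that $s = 0$, and since $t$ carries the same factor $9\alpha^{1-q} + 3\alpha\delta + \delta^2$ we get $t = 0$ too; hence $P_{\delta,\alpha}(x) = y^3 = (x - \delta/3)^3$, as claimed.

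The argument is a short, essentially mechanical elimination, so I do not anticipate a real obstacle; the one point needing a moment's care is the last step, where one must observe that the quantity $9\alpha^{1-q} + 3\alpha\delta + \delta^2$ governs both $s$ and $t$ (this is already visible in the formula for $R$ in the proof of Lemma \ref{lem:pdadisc}), so that $s = 0$ makes $P_{\delta,\alpha}$ a pure cube rather than merely a depressed one. As a consistency check, the hypothesis is symmetric in $\alpha$ and $A$, so the same reasoning also gives $\delta^2 + 3\delta A + 9A^{1-q} = 0$, and subtracting recovers the linear relation $\delta m = 3(v-u)$.
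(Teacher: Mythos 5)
Your argument is correct: the coefficient comparison, the elimination leading to $\delta^2+3\delta\alpha+9\alpha^{1-q}=0$, and the observation that this quantity is (up to constants) the coefficient $s$ of the depressed cubic and also divides $t$, all check out, so the conclusion $P_{\delta,\alpha}(x)=(x-\delta/3)^3$ follows. The opening is the same as the paper's (equate coefficients of $x^2$ and $x$ to get $\delta=\delta'$ and $\delta(\alpha-A)=3(A^{1-q}-\alpha^{1-q})$), but your endgame differs: the paper keeps both parameters $\alpha$ and $A$, packages the constant-term equation as a polynomial relation $K=0$ in $\alpha,A$, and then verifies directly that the difference $P_{\delta,\alpha}(x)-(x-\delta/3)^3$ is an explicit $\Fqt$-combination of $K$, hence zero; you instead eliminate $A$ entirely to land on the single intrinsic condition $\delta^2+3\delta\alpha+9\alpha^{1-q}=0$ on $(\delta,\alpha)$, and then reuse the formulas for $s$ and $t$ from the proof of Lemma \ref{lem:pdadisc} to see that this condition forces $s=t=0$, i.e.\ a pure cube. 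Your route is arguably more transparent, since it explains \emph{why} the polynomial must be a perfect cube (the shared factor $9\alpha^{1-q}+3\alpha\delta+\delta^2$ governing both $s$ and $t$, already visible in the discriminant $R$ of Lemma \ref{lem:pdadisc}), whereas the paper's check is a mechanical identity verification; the paper's version has the minor advantage of not relying on the change-of-variables computation from the earlier lemma.
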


\begin{proof}
By comparing coefficients of $P_{\delta,\alpha}(x)$ and $P_{\delta,A}(x)$, we see that $\delta = \delta'$, so $\alpha \ne A$. Then
\[\delta = \frac{3(A^{1-q}-\alpha^{1-q})}{\alpha-A}\]
and
\[K := \alpha^{2(1-q)} -\alpha^{2-q}A +\alpha^{1-q}A^2 +A^{2(1-q)} +(\alpha^2 -2\alpha^{1-q})A^{1-q} -\alpha A^{2-q} =0.\]
We calculate that \[P_{\delta,\alpha}(x) = x^3 -\frac{3(A^{1-q}-\alpha^{1-q})}{\alpha-A}x^2 -\frac{3(\alpha A^{1-q}-A\alpha^{1-q})}{\alpha-A}x -\frac{(\alpha^2 -\alpha^{1-q})A^{1-q} -(\alpha A -\alpha^{1-q})\alpha^{1-q}}{\alpha-A}\]
and
\[\left(x-\frac{\delta}{3}\right)^3 = x^3 -\frac{3(A^{1-q}-\alpha^{1-q})}{\alpha-A}x^2 +\frac{3(A^{1-q} -\alpha^{1-q})^2}{(\alpha-A)^2}x -\frac{(A^{1-q}-\alpha^{1-q} )^3}{(\alpha -A)^3}.\]
The difference of these two polynomials is
\[
-\frac{3K}{(\alpha-A)^2}x -\frac{(\alpha^2 +\alpha^{1-q} -\alpha A -A^{1-q})K}{(\alpha-A)^3}=0,
\]
and so the result holds.
\end{proof}

We are now ready to enumerate the number of irreducible polynomials of the form $P_{\delta,\alpha}(x)$ which satisfy Condition (1).

\begin{lemma}\label{lem:pdacount}
The number of polynomials of the form $P_{\delta,\alpha}(x)$ which are irreducible and satisfy Condition (1) is $\frac{(q+1)(q-3)(q^2-1)}{3}$ when $q \equiv 1 \mod 3$, and $\frac{(q+1)(q-1)(q^2-1)}{3}$ when $q \equiv 2 \mod 3$.

Moreover, the number of polynomials of the form $P_{\delta,1}(x)$ which are irreducible and satisfy Condition (1) is $\frac{2(q^2-1)}{3}$.
\end{lemma}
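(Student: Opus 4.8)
The plan is to count the pairs $(\delta,\alpha)\in \Fqt\times\Fqt^*$ for which $P_{\delta,\alpha}(x)$ is irreducible and satisfies Condition (1), then divide by the (generically trivial) multiplicity coming from Lemma \ref{lem:pdadistinct}. By Theorem \ref{thm:character}, the relevant pairs are exactly those with $\tfrac{4-\alpha^{q+1}}{3\alpha^{q+1}}$ a nonzero square in $\Fq$ and ($\delta=0$ or $(\alpha+3\delta^{-q})^{q+1}\ne 1$), and which are moreover irreducible. So I would first split on $\alpha$: the condition that $\tfrac{4-\alpha^{q+1}}{3\alpha^{q+1}}$ is a nonzero square in $\Fq$ depends only on $\alpha^{q+1}\in\Fq^*$, so I would count how many $\nu=\alpha^{q+1}\in\Fq^*$ satisfy ``$\tfrac{4-\nu}{3\nu}$ is a nonzero square in $\Fq$'' (equivalently $\nu\ne 4$ and $3\nu(4-\nu)$ is a nonzero square), using standard character-sum / counting-of-points-on-a-conic arguments; this count will differ according to whether $-3$ is a square in $\Fq$, i.e. whether $q\equiv 1$ or $2\pmod 3$, which is the source of the two cases in the statement. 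Each admissible value $\nu$ is hit by exactly $q+1$ values of $\alpha$ (the fibre of the norm map $\Fqt^*\to\Fq^*$).

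Next, for each such $\alpha$, I would count the admissible $\delta\in\Fqt$. There are $q^2$ choices of $\delta$; from these I must remove the $\delta$ making $P_{\delta,\alpha}$ reducible (enumerated via Lemma \ref{lem:pdadisc}) and, separately, handle the locus $(\alpha+3\delta^{-q})^{q+1}=1$. The key simplification is Lemma \ref{lem:pdared}: whenever the square condition on $\alpha$ holds, every $\delta$ with $(\alpha+3\delta^{-q})^{q+1}=1$ already makes $P_{\delta,\alpha}$ reducible, so that locus is a \emph{subset} of the reducible locus and need not be subtracted twice. Thus the admissible count is $q^2$ minus the number of $\delta$ for which $P_{\delta,\alpha}$ is reducible. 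By Lemma \ref{lem:pdadisc} (using that $\alpha^{q+1}=\nu\ne 4$ here), reducibility happens iff $\delta=\delta_+$, $\delta=\delta_-$, or $\delta=\delta_\kappa:=\tfrac{-3\alpha}{2}(1+\tfrac{\kappa^3+1}{\kappa^3-1}r)$ for some $\kappa\in\Fqt$ with $\kappa^3\ne1$, where $r=\sqrt{1-4\alpha^{-(q+1)}}$. The map $\kappa^3\mapsto\delta_\kappa$ is injective, $\delta_0=\delta_+$ (or $\delta_-$, depending on sign convention) and $\delta_\kappa\to\delta_-$ as $\kappa^3\to\infty$, so I would count the image of $\{\kappa\in\Fqt:\kappa^3\ne1\}$ under $\kappa\mapsto\kappa^3$: when $q\equiv1\pmod3$ the cubing map on $\Fqt^*$ is $3$-to-$1$ with image of size $\tfrac{q^2-1}{3}$, contributing (together with $\delta_-$) a reducible set of size $\tfrac{q^2-1}{3}+1$; when $q\equiv2\pmod3$ cubing is a bijection on $\Fqt^*$, giving image size $q^2-1$ and a reducible set of size $q^2-1+1=q^2$ — but here I must be careful, since that would wrongly force the admissible count to $0$. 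The resolution is that for $q\equiv2\pmod 3$ the parametrisation $\delta_\kappa$ ranges over all of $\Fqt\setminus\{\delta_-\}$ but the actual \emph{reducible} $\delta$ are only those where $S$ is a genuine cube, and one must recheck which $\delta_\kappa$ give $P_{\delta,\alpha}$ with three roots in $\Fqt$ versus irreducible; so instead I would recount reducibles directly as ``$R=0$ (two values, $\delta_\pm$, possibly coinciding) plus $R\ne0$ and $S$ a cube'', getting reducible count $2 + \tfrac{q^2-1}{3}-1 = \tfrac{q^2-1}{3}+1$ when $q\equiv1$, and for $q\equiv2$ noting that $\gcd(3,q^2-1)=3$ still holds so the same $\tfrac{q^2-1}{3}$ count of cubes applies, again yielding $\tfrac{q^2-1}{3}+1$ reducibles. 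Hence in both cases the admissible $\delta$ count is $q^2-\tfrac{q^2-1}{3}-1=\tfrac{2(q^2-1)}{3}$, which I note is independent of $\alpha$ and $q\bmod 3$, and in particular proves the ``Moreover'' claim for $\alpha=1$ immediately (since $\nu=1$ always satisfies the square condition: $\tfrac{4-1}{3}=1$).

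Finally I would assemble: the total number of \emph{pairs} is $(q+1)\cdot N_\nu\cdot \tfrac{2(q^2-1)}{3}$, where $N_\nu$ is the number of admissible $\nu\in\Fq^*$; a short conic count should give $N_\nu=\tfrac{q-3}{2}$ when $q\equiv1\pmod3$ and $\tfrac{q-1}{2}$ when $q\equiv2\pmod3$, whereupon the product becomes $\tfrac{(q+1)(q-3)(q^2-1)}{3}$ resp.\ $\tfrac{(q+1)(q-1)(q^2-1)}{3}$ after dividing by the $2$-to-$1$ overcount of $\pm$ in the formula for admissible $\delta$ — wait, more carefully: the $\tfrac{2(q^2-1)}{3}$ already counts $\delta$ directly, so the factor of two lives in $N_\nu$, and I would arrange the bookkeeping so the $2$'s cancel to land on the stated formulas. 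The last point is that Lemma \ref{lem:pdadistinct} shows distinct pairs $(\delta,\alpha)\ne(\delta',A)$ give the same polynomial only when $P_{\delta,\alpha}=(x-\delta/3)^3$, which is reducible and hence excluded; so on the admissible set the map $(\delta,\alpha)\mapsto P_{\delta,\alpha}$ is injective and the pair count equals the polynomial count. The main obstacle I anticipate is the $q\equiv2\pmod3$ bookkeeping in the middle paragraph: making sure the $\delta_\kappa$-parametrisation is not double-counting or mis-classifying irreducibles as reducibles when cubing is a bijection, and confirming via Lemma \ref{lem:dickson}'s ``$S$ a noncube'' criterion that the admissible count genuinely stays $\tfrac{2(q^2-1)}{3}$ rather than collapsing; a clean way around this is to count reducibles \emph{intrinsically} as the zero set of the discriminant-type polynomials rather than through the $\kappa$-parametrisation.
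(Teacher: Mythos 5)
Your proposal is correct and its overall decomposition is the same as the paper's: count the admissible $\alpha$ via the condition that $\tfrac{4-\alpha^{q+1}}{3\alpha^{q+1}}$ be a nonzero square (which depends only on the norm $\nu=\alpha^{q+1}$, each admissible $\nu$ having a fibre of size $q+1$, and the substitution $\nu=4/(3y^2+1)$ giving $(q-3)/2$ resp.\ $(q-1)/2$ admissible values according as $q\equiv 1,2 \bmod 3$ — the ``conic count'' you defer is exactly this one-line substitution in the paper), multiply by a per-$\alpha$ count of irreducible $\delta$, use Lemma \ref{lem:pdared} so that the locus $(\alpha+3\delta^{-q})^{q+1}=1$ need not be subtracted separately, and use Lemma \ref{lem:pdadistinct} (which the paper leaves implicit) to identify pairs with polynomials. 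Where you genuinely diverge is the inner count: the paper counts the reducible $\delta$ by parametrising reducible $P_{\delta,\alpha}$ by their roots $\tau,\sigma,\nu$ in $\Fqt$ (getting $2$ triple-root values plus $(q^2-4)/3$ distinct-root values), whereas you count them through the Dickson criterion already packaged in Lemma \ref{lem:pdadisc}, i.e.\ as $\{\delta_+\}$ together with the $\kappa^3$-parametrised family; both give $(q^2+2)/3$ reducibles and hence $\tfrac{2(q^2-1)}{3}$ irreducibles per admissible $\alpha$, so your route is a legitimate shortcut that reuses Lemma \ref{lem:pdadisc} more fully. One slip to repair in the write-up: cubing on $\Fqt^\times$ is never a bijection here, since $3\mid q^2-1$ whenever $p>3$, regardless of $q\bmod 3$ (it is cubing on $\Fq^\times$ that is bijective when $q\equiv 2\bmod 3$); your worried detour about a reducible count of $q^2$ and the ``$\delta_\kappa$ ranges over all of $\Fqt\setminus\{\delta_-\}$'' claim are artifacts of that error, and your eventual recount via ``$R=0$ or ($R\ne0$ and $S$ a cube)'' with $\gcd(3,q^2-1)=3$ is the correct resolution — it also explains why the per-$\alpha$ count $\tfrac{2(q^2-1)}{3}$ is independent of $q\bmod 3$, with the case split entering only through the count of admissible $\nu$. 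With that cleaned up, and the $N_\nu$ computation made explicit, your argument matches the stated totals and yields the ``moreover'' claim for $\alpha=1$ exactly as in the paper.
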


\begin{proof}
For each $\alpha$, we wish to determine the number of $\delta$ such that $P_{\delta,\alpha}(x)$ is irreducible. If $\alpha^{q+1} =4$, then $P_{\delta,\alpha}(x) = (x-(\delta +\alpha))(x+\alpha/2)^2$ is reducible. We fix $\alpha$ such that $\alpha^{q+1} \ne 4$ and count the number of $\delta$ for which $P_{\delta,\alpha}(x)$ is reducible.

Suppose $P_{\delta,\alpha}(x)$ is reducible. Then $P_{\delta,\alpha}(x) = (x-\tau)(x-\sigma)(x-\nu)$ for some $\tau,\sigma,\nu \in \Fqt$ by Lemma \ref{lem:pdadisc}. Equating coefficients yields that 
\begin{align*}
    \tau+\sigma+\nu &=\delta, \tag{i}\\
    -(\tau\sigma+\tau\nu+\sigma\nu)&=\delta\alpha+3\alpha^{1-q},\tag{ii}\\
    \tau\sigma\nu&=\delta\alpha^2(1-\alpha^{-(q+1)})/3+\alpha^{2-q}.\tag{iii}
\end{align*}
We obtain that (up to labelling of $\sigma$ and $\nu$)
\[\sigma = -\left(\frac{a\tau+\alpha^{1-q}}{\tau+b}\right)\]
and
\[\nu = -\left(\frac{b\tau+\alpha^{1-q}}{\tau+a}\right),\]
where $a$ and $b$ are as in (\ref{eqn:abalpha}), and $\tau\notin\{-a,-b\}$. Note that if $\tau\in\{-a,-b\}$, then $\alpha^{q+1}=4$, contrary to our assumption. Note furthermore that $a\ne b$ precisely when $\alpha^{q+1}\ne 4$.

We remark that $\tau = \sigma$ if and only if $\tau^2 +\alpha\tau +\alpha^{1-q}=0$, while $\tau = \nu$ if and only if $\tau^2 +\alpha\tau +\alpha^{1-q}=0$, and $\sigma = \nu$ if and only if $\tau^2 +\alpha\tau +\alpha^{1-q}=0$ or $a=b$. Hence if any two of $\tau,\sigma$ and $\nu$ are equal, then all three are equal and $P_{\delta,\alpha}(x)= (x-\tau)^3$ for some $\tau\in \Fqt$. Equations (i) and (ii) then imply that $\tau^2+\alpha\tau+\alpha^{1-q}=0$, and (iii) is satisfied whenever (i) and (ii) are satisfied, since it can be rearranged to read $(\tau^2+\alpha\tau+\alpha^{1-q})(\tau-\alpha)=0$. The discriminant of $\tau^2+\alpha\tau+\alpha^{1-q}$ is $\alpha^2(1-4\alpha^{-(q+1)})$, which is nonzero by assumption and always a square in $\Fqt$, so there are precisely two values of $\tau$, and hence two values of $\delta$, for which $P_{\delta,\alpha}(x)$ has a triple root in $\Fqt$.

Hence for any of the $q^2-4$ values of $\tau$ such that $(\tau+a)(\tau+b)(\tau^2+\alpha\tau+\alpha^{1-q})\ne 0$, there is a unique $\delta$ for which $\tau$ is a root of a polynomial $P_{\delta,\alpha}(x)$ having three distinct roots in $\Fqt$. Therefore there are $\frac{q^2-4}{3}$ values of $\delta$ for which $P_{\delta,\alpha}(x)$ has three distinct roots in $\Fqt$.

Hence there are $q^2-2-\frac{q^2-4}{3}=\frac{2(q^2-1)}{3}$ values of $\delta$ for which $P_{\delta,\alpha}(x)$ is irreducible. Recall from Theorem \ref{thm:character} that $P_{\delta,\alpha}(x)$ satisfies Condition (1) if and only if $\frac{4-\alpha^{q+1}}{3\alpha^{q+1}}$ is a nonzero square in $\Fq$, and $\delta=0$ or $(\alpha+3\delta^{-q})^{q+1}\ne 1$. By Lemma \ref{lem:pdared}, it cannot occur that $P_{\delta,\alpha}(x)$ is irreducible when $\frac{4-\alpha^{q+1}}{3\alpha^{q+1}}$ is a nonzero square in $\Fq$ and $(\alpha+3\delta^{-q})^{q+1}= 1$, and hence it remains only to count the number of values of $\alpha$ for which $\frac{4-\alpha^{q+1}}{3\alpha^{q+1}}$ is a nonzero square in $\Fq$. Each such $\alpha$ will contribute $\frac{2(q^2-1)}{3}$ irreducibles satisfying Condition (1); in particular for $\alpha=1$ we get the second claim.

Suppose $\frac{4-\alpha^{q+1}}{3\alpha^{q+1}}=y^2$ for some $y \in \Fq^*$. If $y^2 \ne -1/3$, then
\[\alpha^{q+1} = \frac{4}{3y^2 +1}.
\]
Since $-3$ is a square in $\Fq$ if and only if $q \equiv 1 \mod 3$, we have
\[
\left|\{y^2 : y \in \Fq \mid y^2 \ne -1/3\}\right| = \begin{cases}
(q-3)/2 & \text{if } q \equiv 1 \mod 3\\
(q-1)/2 & \text{if } q \equiv 2 \mod 3
\end{cases}.
\]
The number of such $\alpha$ is hence $(q+1)(q-3)/2$ when $q \equiv 1 \mod 3$, and $(q+1)(q-1)/2$ when $q \equiv 2 \mod 3$, completing the proof.
\end{proof}

Next we enumerate the number of irreducible polynomials of the form $Q_{\delta,\gamma}(x)$ which satisfy Condition (1).

\begin{lemma}\label{lem:Qcount}
The number of polynomials of the form $Q_{\delta,\gamma}(x) = x^3 -\delta x^2 -\gamma x +\delta\gamma/9$ that are irreducible and satisfy Condition (1) is  $\frac{(q-1)(q+1)^2}{3}$.
\end{lemma}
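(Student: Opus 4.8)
The plan is to count admissible pairs $(\delta,\gamma)$ directly, grouping by $\gamma$. Since the coefficient of $x^2$ in $Q_{\delta,\gamma}(x)$ is $-\delta$ and the coefficient of $x$ is $-\gamma$, distinct pairs $(\delta,\gamma)$ give distinct polynomials, so no overcounting occurs. By Theorem \ref{thm:character}, among the $\gamma$ with $\gamma^{q+1}=9$ an irreducible $Q_{\delta,\gamma}(x)$ satisfies Condition (1) exactly when $\gamma^{(q+1)/2}=3$ (and this already forces $\gamma^{q+1}=9$); thus it suffices to (i) count the $\gamma\in\Fqt$ with $\gamma^{(q+1)/2}=3$, and (ii) for each such $\gamma$, count the $\delta\in\Fqt$ with $Q_{\delta,\gamma}(x)$ irreducible.

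For (i): the group $\Fqt^\times$ is cyclic of order $q^2-1$, and $(q+1)/2$ divides $q+1$ and hence $q^2-1$, so $x\mapsto x^{(q+1)/2}$ has image the unique subgroup of order $2(q-1)$, with every nonempty fibre of size $(q+1)/2$. Since $3\in\Fq^\times$ we have $3^{2(q-1)}=1$, so $3$ lies in that image; hence there are exactly $(q+1)/2$ admissible $\gamma$. For each of them $\gamma$ is a square in $\Fqt$ (as $\gamma^{(q^2-1)/2}=3^{q-1}=1$), and therefore so are $\gamma/9$ and $-3\gamma$; here we also use that every element of $\Fq^\times$ is a square in $\Fqt$.

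For (ii), fix an admissible $\gamma$; I would mimic the argument of Lemma \ref{lem:pdacount}. Completing the cube via $x=y+\delta/3$ puts $Q_{\delta,\gamma}$ in the form $y^3+sy+t$ with $-4s^3-27t^2 = \frac{4\gamma(\delta^2+3\gamma)^2}{9}$, which is always a square in $\Fqt$; hence by Lemma \ref{lem:dickson} each $Q_{\delta,\gamma}(x)$ is either irreducible over $\Fqt$ or splits completely. To count the split ones, solve $Q_{\delta,\gamma}(\tau)=0$ for $\delta$, obtaining $\delta=\delta(\tau):=\tau(\tau^2-\gamma)/(\tau^2-\gamma/9)$. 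One checks that the two $\tau$ with $\tau^2=\gamma/9$ are never roots of any $Q_{\delta,\gamma}$, so $\delta(\tau)$ is defined for the remaining $q^2-2$ values of $\tau$ and every split $Q_{\delta,\gamma}$ arises as $Q_{\delta(\tau),\gamma}$, once for each of its roots $\tau$. Now $-4s^3-27t^2=0$ forces $\delta^2=-3\gamma$, and in that case $Q_{\delta,\gamma}(x)=(x-\delta/3)^3$; there are exactly two such $\delta$ (as $-3\gamma$ is a nonzero square), each arising from a single $\tau$, whereas every other split $Q_{\delta,\gamma}$ has three distinct roots and arises from exactly three values of $\tau$. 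Hence $q^2-2 = 3N + 2$, where $N$ is the number of split $Q_{\delta,\gamma}$ with distinct roots; since $3\mid q^2-4$ (as $q$ is prime to $3$) this gives $N=(q^2-4)/3$, so there are $N+2=(q^2+2)/3$ reducible and thus $q^2-(q^2+2)/3 = 2(q^2-1)/3$ irreducible $Q_{\delta,\gamma}(x)$, all of which satisfy Condition (1). Multiplying by the $(q+1)/2$ admissible $\gamma$ gives $\frac{q+1}{2}\cdot\frac{2(q^2-1)}{3} = \frac{(q-1)(q+1)^2}{3}$.

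The main obstacle is the case analysis in (ii): verifying that the exceptional values $\tau^2=\gamma/9$ genuinely never occur as roots (so the domain of the parametrisation is exactly the $q^2-2$ remaining elements), and confirming that the vanishing of the discriminant produces a triple root rather than a double root, so that the fibres of $\tau\mapsto Q_{\delta(\tau),\gamma}$ have size exactly $1$ or $3$ and all the resulting counts are integers.
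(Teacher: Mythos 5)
Your proof is correct, and its overall skeleton matches the paper's: both arguments use Theorem \ref{thm:character} to reduce Condition (1) to $\gamma^{(q+1)/2}=3$, note there are $(q+1)/2$ such $\gamma$ (each a square in $\Fqt$), pass to the depressed cubic via $x=y+\delta/3$, observe that $R=\frac{4\gamma(\delta^2+3\gamma)^2}{9}$ is always a square in $\Fqt$, and conclude that each admissible $\gamma$ contributes $\frac{2(q^2-1)}{3}$ irreducible polynomials. Where you genuinely diverge is in counting the reducible $\delta$ for a fixed $\gamma$: the paper invokes the full cube/noncube criterion of Lemma \ref{lem:dickson}, computes $S$ explicitly, and parametrises the reducible $\delta$ as $\delta=\sqrt{-3\gamma}$ or $\delta=-\frac{x^3+1}{x^3-1}\sqrt{-3\gamma}$ before counting those values; you use only the ``square discriminant implies zero or three roots'' part of Lemma \ref{lem:dickson} and count the split polynomials by fibering over their roots, exactly in the style of Lemma \ref{lem:pdacount}: the map $\tau\mapsto\delta(\tau)=\tau(\tau^2-\gamma)/(\tau^2-\gamma/9)$ is defined on $q^2-2$ values of $\tau$ (a root $\tau$ with $\tau^2=\gamma/9$ would give $Q_{\delta,\gamma}(\tau)=-8\gamma\tau/9\ne 0$, so such $\tau$ indeed never occur), its fibres have size $3$ except over the two triple-root polynomials with $\delta^2=-3\gamma$, where $Q_{\delta,\gamma}(x)=(x-\delta/3)^3$ as you verify, so no double-root case arises; this yields $(q^2-4)/3+2=(q^2+2)/3$ reducibles per $\gamma$, in agreement with the paper's count. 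Your route is somewhat more elementary (no computation of $S$ and no analysis of when a M\"obius-type expression is a cube) and makes the structural parallel with the $P_{\delta,\alpha}$ enumeration explicit, while the paper's route gives as a by-product an explicit parametrisation of the reducible pairs $(\delta,\gamma)$; both arrive at $\frac{q+1}{2}\cdot\frac{2(q^2-1)}{3}=\frac{(q-1)(q+1)^2}{3}$, and the verifications you flagged as potential obstacles do check out.
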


\begin{proof}
First note that there are $q^2(q+1)/2$ polynomials of the form $Q_{\delta,\gamma}(x)$ satisfying Condition (1); there are $q^2$ choices for $\delta$ and $(q+1)/2$ choices for $\gamma$, since $\gamma^{(q+1)/2} =3$. We can transform $Q_{\delta,\gamma}(x)$ into a cubic
\[Q'(y) = y^3 -(\delta^2/3 +\gamma) y -2\delta(\delta^2 +3\gamma)/27\]
whose coefficient of $y^2$ is zero via the change of variable $y = x-\delta/3$. Then, using the notation in Lemma \ref{lem:dickson}, we require
\[R = \frac{4\gamma}{9}\left(\delta^2 +3\gamma\right)^2\]
to be a nonzero square in $\Fqt$ in order for $Q'(y)$ to be irreducible. Since
\[\gamma^{(q^2 -1)/2} = (\gamma^{(q+1)/2})^{q-1} = 3^{q-1} = 1,\]
we have that $\gamma$, and hence $R$, is a square in $\Fqt$. To ensure $R$ is nonzero, we need $\delta^2 \neq -3\gamma$. We now have
\[\mu = \pm \frac{\sqrt{R}}{9} = \pm 2\sqrt{\gamma}\left(\frac{\delta^2 +3\gamma}{27}\right)\]
and so for irreducibility of $Q'(y)$ we require
\begin{align*}
S &= \frac{1}{27}(\delta \pm \sqrt{-3\gamma})(\delta + \sqrt{-3\gamma})(\delta - \sqrt{-3\gamma})\\
&=\frac{\delta \mp \sqrt{-3\gamma}}{\delta \pm \sqrt{-3\gamma}}\left(\frac{\delta \pm \sqrt{-3\gamma}}{3}\right)^3
\end{align*}
to be a noncube in $\Fqt$. Thus, we need ($\delta \mp \sqrt{-3\gamma})/(\delta \pm \sqrt{-3\gamma})$ to be a noncube. Since 
\[C := \frac{\delta - \sqrt{-3\gamma}}{\delta + \sqrt{-3\gamma}}\] is a cube if and only if
\[\frac{1}{C} = \frac{\delta + \sqrt{-3\gamma}}{\delta - \sqrt{-3\gamma}}\] is a cube, we proceed with determining when $C$ is a cube without loss of generality. Let $x \in \Fqt$. Then
\begin{align*}
&C = \frac{\delta - \sqrt{-3\gamma}}{\delta + \sqrt{-3\gamma}} \cdot \frac{\delta - \sqrt{-3\gamma}}{\delta - \sqrt{-3\gamma}} = x^3\\
\iff &(x^3 -1)\delta^2 +2\sqrt{-3\gamma}\delta +(x^3 -1)3\gamma =0\\
\iff &\delta = \sqrt{-3\gamma} \text{ or } \delta = -\left(\frac{x^3 +1}{x^3 -1}\right)\sqrt{-3\gamma}.
\end{align*}
If $\delta = \sqrt{-3\gamma}$ then $\delta^2 = -3\gamma$. Note that
\[-\sqrt{-3\gamma} = -\sqrt{-3\sigma} \iff \gamma = \sigma\]
for $\gamma, \sigma \in \Fqt$ with $\gamma^{(q+1)/2} = \sigma^{(q+1)/2} =3$ and that
\[\frac{x^3 +1}{x^3 -1}\phi = \frac{y^3 +1}{y^3 -1}\phi \iff x^3 = y^3\]
for $x,y,\phi \in \Fqt$ with $\phi \neq 0$. There are $(q^2 -1)/3$ nonzero cubes in $\Fqt$. When $x=0$, $\delta = \sqrt{-3\gamma}$. Hence the number of pairs $(\delta,\gamma)$ that yield a reducible $Q_{\delta,\gamma}(x)$ is
\begin{align*}
&\left|\left\{\left(\left(\frac{x^3 +1}{x^3 -1}\right)\sqrt{-3\gamma},\gamma\right) : x,\gamma \in \Fqt \,\middle|\, x \ne 0, \gamma^{(q+1)/2} = 3 \right\}\right| + \left|\left\{\left(\sqrt{-3\gamma},\gamma\right) : \gamma \in \Fqt \,\middle|\, \gamma^{(q+1)/2} = 3 \right\}\right|\\
=& \left(\frac{q^2 -1}{3}\right)\left(\frac{q+1}{2}\right) + \frac{q+1}{2}\\
=& \frac{(q+1)(q^2 +2)}{6}.
\end{align*}
Since different pairs $(\delta,\gamma)$ clearly define different polynomials $Q_{\delta,\gamma}(x)$ polynomials, the number of irreducibles of the form $Q_{\delta,\gamma}(x)$ is
\[\frac{q^2(q+1)}{2}-\frac{(q+1)(q^2 +2)}{6} = \frac{(q-1)(q+1)^2}{3}.\]
\end{proof}

Finally we enumerate the number of irreducible polynomials of the form $B_{\theta}(x)$ which satisfy Condition (1).

\begin{lemma}\label{lem:Bcount}
The number of polynomials of the form $B_\theta(x) = x^3 -\theta$ that are irreducible and satisfy Condition (1) is $\frac{2(q^2-1)}{3}$ when $q \equiv 1 \mod 3$, and zero otherwise. 
\end{lemma}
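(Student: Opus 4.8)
The plan is to reduce everything to Theorem \ref{thm:binomialirredcond1} and then perform a count in the cyclic group $\Fqt^\times$. First, if $q \equiv 2 \bmod 3$, Theorem \ref{thm:binomialirredcond1} asserts that no binomial $x^3-\theta$ is simultaneously irreducible and satisfies Condition (1), so the count is zero and this case is immediate.

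So assume $q \equiv 1 \bmod 3$. Then $3 \mid q-1$ while $3 \nmid q+1$, hence, writing $v_3$ for the $3$-adic valuation, $a := v_3(q^2-1) = v_3(q-1) \geq 1$. By Theorem \ref{thm:binomialirredcond1}, $B_\theta(x)$ is irreducible and satisfies Condition (1) precisely when $3 \nmid \frac{q^2-1}{o(\theta)}$; in particular $o(\theta)$ must be defined, so $\theta$ ranges over $\Fqt^\times$ (the excluded value $\theta=0$ gives the reducible $x^3$ anyway). Since $o(\theta) \mid q^2-1$, the divisibility $3 \mid \frac{q^2-1}{o(\theta)}$ fails exactly when $v_3(o(\theta)) = a$, equivalently when $o(\theta) \nmid \frac{q^2-1}{3}$, equivalently when $\theta^{(q^2-1)/3} \ne 1$, i.e. when $\theta$ is not a cube in $\Fqt^\times$.

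The last step is then the standard count: the cube map $x \mapsto x^3$ on the cyclic group $\Fqt^\times$ of order $q^2-1$ has kernel of size $\gcd(3,q^2-1)=3$, so its image (the set of cubes) is the unique subgroup of index $3$, with $\frac{q^2-1}{3}$ elements; hence there are $(q^2-1) - \frac{q^2-1}{3} = \frac{2(q^2-1)}{3}$ non-cubes, giving the claimed count. (Equivalently, writing $q^2-1 = 3^a m$ with $\gcd(m,3)=1$, the $\theta$ with $v_3(o(\theta))=a$ are exactly those of order $3^a e$ for $e \mid m$, of which there are $\varphi(3^a)\sum_{e\mid m}\varphi(e) = 3^{a-1}\cdot 2\cdot m = \frac{2(q^2-1)}{3}$.)

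There is no serious obstacle here once Theorem \ref{thm:binomialirredcond1} is available; the only point needing a little care is the equivalence between the stated divisibility condition on $o(\theta)$ and $\theta$ being a non-cube, which relies on the observation that $q \equiv 1 \bmod 3$ forces the entire $3$-part of $q^2-1$ to already divide $q-1$, so that $v_3(o(\theta))$ can only take the two "extreme-relevant" behaviours $= a$ or $< a$ with respect to the condition.
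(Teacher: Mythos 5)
Your proof is correct and follows essentially the same route as the paper: invoke Theorem \ref{thm:binomialirredcond1} to reduce the problem to counting $\theta\in\Fqt^\times$ with $3\nmid\frac{q^2-1}{o(\theta)}$, identify these as exactly the non-cubes, and count $\frac{2(q^2-1)}{3}$ of them. Your valuation-based phrasing (and the remark about the $3$-part of $q-1$) is just a more explicit version of the paper's observation that $3\mid\frac{q^2-1}{o(\theta)}$ precisely when $\theta$ lies in the index-$3$ subgroup of cubes.
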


\begin{proof}
By Theorem \ref{thm:binomialirredcond1}, it suffices to count the number of elements $\theta \in \Fqt$ such that 3 does not divide $\frac{q^2-1}{o(\theta)}$. Let $\Fqt^* = \left<\sigma\right>$ and suppose that $3 \mid \frac{q^2 -1}{o(\theta)}$. Then $\frac{q^2 -1}{o(\theta)} = 3k$ for some $k \in \ZZ$, so $o(\theta) = \frac{q^2 -1}{3k}$ and thus $\theta \in \left<\sigma^3\right>$. Hence there are $\left|\left<\sigma\right>\right|-\left|\left<\sigma^3\right>\right| = \frac{2(q^2-1)}{3}$ elements $\theta$ such that $3 \nmid \frac{q^2-1}{o(\theta)}$.
\end{proof}

Combining Lemmas \ref{lem:pdacount}, \ref{lem:Qcount}, and \ref{lem:Bcount} gives us the following. This enumeration will allow us in the next section to fully count and characterise the equivalence classes.

\begin{corollary}\label{cor:countall}
The total number of irreducible cubic polynomials in $\Fqt[x]$ satisfying Condition (1) is
\[\begin{cases}
\frac{q(q-1)^2(q+1)}{3} & \text{if } q \equiv 1 \mod 3\\
\frac{q(q-1)(q+1)^2}{3} & \text{if } q \equiv 2 \mod 3
\end{cases}.\]
\end{corollary}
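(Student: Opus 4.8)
The plan is to observe that every irreducible cubic over $\Fqt$ satisfying Condition (1) lies in exactly one of the three families $B_\theta$, $P_{\delta,\alpha}$, $Q_{\delta,\gamma}$ of Theorem \ref{thm:character}, and then to add the three enumerations provided by Lemmas \ref{lem:Bcount}, \ref{lem:pdacount} and \ref{lem:Qcount} and simplify, splitting into the cases $q\equiv 1\mod 3$ and $q\equiv 2\mod 3$.

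For the first point I would begin by noting that $Z\neq\emptyset$: the group $\Fqt^\times$ is cyclic of order $q^2-1$, so $x^{q+1}=1$ has $q+1\geq 2$ solutions in $\Fqt$, and any two distinct ones give an element of $Z$. Hence by Lemma \ref{lem:Hpol} an irreducible $P$ satisfying Condition (1) has $H_P$ with no zeroes in $Z$, in particular $H_P\not\equiv 0$; together with Lemma \ref{lem:isreducible} this forces $H_P$ to be reducible and not identically zero, so Theorem \ref{thm:character} applies and $P$ is of one of the three stated forms.

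Next I would check that these three families of irreducible cubics are pairwise disjoint (repetitions among the parameters within each family are already handled by Lemma \ref{lem:pdadistinct} and the analogous trivial observations for $B_\theta$ and $Q_{\delta,\gamma}$). Comparing the coefficient of $x^2$ shows $B_\theta$ shares no polynomial with $P_{\delta,\alpha}$ (which would force $\delta=0$, but then its $x$-coefficient $-3\alpha^{1-q}$ is nonzero) nor with $Q_{\delta,\gamma}$ (which would force $\gamma=0$, impossible since $\gamma^{q+1}=9$). For $P_{\delta,\alpha}$ versus $Q_{\delta',\gamma}$, equating the three non-leading coefficients forces $\delta=\delta'$, $\gamma=\delta\alpha+3\alpha^{1-q}$, and, after eliminating $\gamma$ and clearing denominators, the relation $\delta^2+3\alpha\delta+9\alpha^{1-q}=0$; the two roots of this quadratic in $\delta$ are precisely the values $\delta_\pm$ of Lemma \ref{lem:pdadisc} at which $P_{\delta,\alpha}$ is reducible, contradicting irreducibility. (Alternatively one can avoid coefficient chasing and use the essential uniqueness of the factorisation $H_P=\mu(czw+az+bw+d)(czw+bz+aw+d)$ from Lemma \ref{lem:HPfactor}: the mutually exclusive conditions ``$c=0$'', ``$c\neq 0,\ a+b\neq 0$'', ``$c\neq 0,\ a+b=0$'' are determined by $P$ and correspond exactly to Cases 1, 2, 3 of the proof of Theorem \ref{thm:character}, hence to the three families.)

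It then remains to add. For $q\equiv 1\mod 3$ the total is $\frac{(q+1)(q-3)(q^2-1)}{3}+\frac{(q-1)(q+1)^2}{3}+\frac{2(q^2-1)}{3}$; pulling out the common factor $\frac{(q-1)(q+1)}{3}$ leaves $(q+1)(q-3)+(q+1)+2=(q+1)(q-2)+2=q(q-1)$, giving $\frac{q(q-1)^2(q+1)}{3}$. For $q\equiv 2\mod 3$ the binomial contribution vanishes and $\frac{(q-1)^2(q+1)^2}{3}+\frac{(q-1)(q+1)^2}{3}=\frac{(q-1)(q+1)^2}{3}\cdot q=\frac{q(q-1)(q+1)^2}{3}$. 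The only step that is not pure bookkeeping is the disjointness of the $P_{\delta,\alpha}$ and $Q_{\delta,\gamma}$ families, which I expect to be the main (albeit minor) obstacle; everything else is direct substitution and algebra.
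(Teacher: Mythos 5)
Your proposal is correct and follows essentially the same route as the paper, whose proof of this corollary is simply the combination of Lemmas \ref{lem:pdacount}, \ref{lem:Qcount} and \ref{lem:Bcount}; your arithmetic in both congruence cases matches the stated totals. The only addition is that you explicitly verify exhaustiveness via Lemmas \ref{lem:Hpol} and \ref{lem:isreducible} and the pairwise disjointness of the three families (using the relation $\delta^2+3\alpha\delta+9\alpha^{1-q}=0$ and Lemma \ref{lem:pdadisc}), details the paper leaves implicit, and these checks are carried out correctly.
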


\subsection{Equivalence representatives}

In order to calculate equivalence classes, we need to utilise the theory of {\it orbit polynomials}. Let $\Psi = \npmatrix{-b&-d\\c&a}\in \GL(2,q^2)$, and denote by $[\Psi]$ the corresponding element of $\PGL(2,q^2)$. Define a polynomial $F_\Psi(x)$ as follows:
\[
F_\Psi(x) = cx^{q^2+1}+ax^{q^2}+bx+d.
\]

Polynomials of this form have been studied extensively, for example in \cite{blakegaomullin}, \cite{ore}, \cite{stichtenothtopuzoglu}.

Given $s=[\Psi]\in \PGL(2,q^2)$ as above, define $s(x) = -\left(\frac{bx+d}{cx+a}\right)$. The {\it orbit polynomial} of the group $G$ generated by $s$ is defined as
\[
O_G(x) = \prod_{s\in G} (x-s(y))\in \FF_{q^2}(y)[x].
\]

The factorisation of polynomials of the form $F_\Psi(x)$ was determined in \cite{stichtenothtopuzoglu} and \cite{gowmcguire}.

\begin{theorem}
Let $s=[\Psi]=\left[\npmatrix{-b&-d\\c&a}\right] \in \PGL(2,q^2)$, and suppose $s$ has order $r$ dividing $q^2+1$. The irreducible factors of the polynomial $F_\Psi(x)$ of degree greater than two all have degree $r$, each of which are specialisations of $O_G(x)$ at some $y$.
\end{theorem}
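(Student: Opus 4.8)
The plan is to read the roots of $F_\Psi(x)$ through the dynamics of the linear-fractional map $s(x) = -(bx+d)/(cx+a)$ on $\PP^1(\overline{\Fq})$. The case $c=0$ is quickly disposed of: then $s$ is affine with multiplier in $\Fqt^*$, so its order is either $p$ or a divisor of $q^2-1$; since $p\nmid q^2+1$, the hypothesis $r\mid q^2+1$ forces $r\mid\gcd(q^2-1,q^2+1)\le 2$, and the statement is vacuous. So assume $c\ne 0$, hence $\deg F_\Psi = q^2+1$. First I would record that $F_\Psi$ is separable: since $p\mid q^2$ one has $F_\Psi'(x)=cx^{q^2}+b$, and the division identity $F_\Psi(x)=(x+a/c)F_\Psi'(x)+(cd-ab)/c$ shows $\gcd(F_\Psi,F_\Psi')$ divides the nonzero constant $(cd-ab)/c=\det\Psi/c$. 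Next, for any root $x$, rewriting $F_\Psi(x)=0$ as $x^{q^2}(cx+a)=-(bx+d)$, and noting that $cx+a=0$ would force $\det\Psi=0$, gives $x^{q^2}=s(x)$. Raising $F_\Psi(x)=0$ to the $q^2$-th power and using $a,b,c,d\in\Fqt$ shows $F_\Psi(x^{q^2})=0$, so $s$ permutes the $q^2+1$ roots; and since $s$ commutes with the $q^2$-power Frobenius, induction gives $x^{q^{2k}}=s^k(x)$ for all $k\ge 0$.

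Consequently the Frobenius orbit of a root coincides with its $\langle s\rangle$-orbit. As $s$ has order $r$ in $\PGL(2,q^2)$, the transformation $s^r$ is the identity, so $x^{q^{2r}}=x$; hence every root lies in $\F_{q^{2r}}$ and every irreducible factor of $F_\Psi$ over $\Fqt$ has degree $e$ dividing $r$. Suppose some irreducible factor has degree $e$ with $2<e<r$, and let $x_0$ be one of its roots. Its $\Fqt$-conjugates are exactly $x_0,s(x_0),\dots,s^{e-1}(x_0)$, and $s^e(x_0)=x_0^{q^{2e}}=x_0$. Since $s^e$ commutes with $s$, it therefore fixes each of these $e$ points; but $s^e$ is not the identity element of $\PGL(2,\overline{\Fq})$, because $0<e<r$, and a non-identity element of $\PGL(2,\overline{\Fq})$ fixes at most two points of $\PP^1$. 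This forces $e\le 2$, a contradiction. Hence every irreducible factor of $F_\Psi$ of degree greater than two has degree exactly $r$.

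For the remaining assertion, let $x_0$ be a root generating an irreducible factor of degree $r$. Its conjugates are the $r$ distinct elements $x_0,s(x_0),\dots,s^{r-1}(x_0)$, so this factor equals $\prod_{j=0}^{r-1}(x-s^j(x_0))$. Since $s$ has order $r$, the map $j\mapsto s^j$ is a bijection onto $G=\langle s\rangle$, so $\{s^j(x_0):0\le j<r\}=\{t(x_0):t\in G\}$. Each $t(x_0)$ is a conjugate of the root $x_0$, hence finite, so $x_0$ is not a pole of any coefficient of $O_G(x)=\prod_{t\in G}(x-t(y))\in\Fqt(y)[x]$; specialising at $y=x_0$ is then ordinary substitution and returns exactly $\prod_{t\in G}(x-t(x_0))$, i.e. the given irreducible factor.

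I expect the crux to be the middle paragraph — excluding irreducible factors of intermediate degree $2<e<r$. The delicate point is that the whole Frobenius orbit of such a root must be fixed \emph{pointwise}, not merely setwise, by $s^e$, so that $s^e$ genuinely has more than two fixed points on $\PP^1$; this comes from combining $x_0^{q^{2e}}=s^e(x_0)$ with the commutation $s^e\circ s^j=s^j\circ s^e$. After that the fixed-point bound for a non-identity element of $\PGL(2,\overline{\Fq})$ closes the argument at once. The supporting facts — separability of $F_\Psi$, $s$-invariance of its root set, and $cx+a\ne 0$ at roots — are routine and all follow from $\det\Psi\ne 0$.
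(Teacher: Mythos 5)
Your proof is correct. Note that the paper itself gives no proof of this statement: it is quoted as a known result from \cite{stichtenothtopuzoglu} and \cite{gowmcguire}, so there is no internal argument to compare against; what you have written is essentially the standard proof from those sources. The chain of steps is sound: $\det\Psi=cd-ab\ne 0$ gives separability and rules out $cx+a=0$ at a root, so every root satisfies $x^{q^2}=s(x)$ and, by induction using that $s$ has coefficients in $\Fqt$, $x^{q^{2k}}=s^k(x)$; hence the Frobenius orbit of a root is its $\langle s\rangle$-orbit, all roots lie in $\FF_{q^{2r}}$, and an irreducible factor of intermediate degree $2<e<r$ would force the non-identity element $s^e$ of $\PGL(2,\overline{\Fq})$ to fix pointwise the $e\ge 3$ distinct conjugates of a root, contradicting the two-fixed-point bound; the identification of a degree-$r$ factor with the specialisation of $O_G(x)$ at $y=x_0$ is then immediate, and your check that $x_0$ is not a pole of the coefficients is the right thing to verify. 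One small inaccuracy: in the case $c=0$ the statement is not literally vacuous --- when $r\le 2$ it asserts that no irreducible factor of degree greater than two exists --- but this follows from your own argument, since $x^{q^2}=s(x)$ still holds at every root and places all roots in $\FF_{q^{2r}}\subseteq\FF_{q^4}$; so this is a cosmetic slip rather than a gap.
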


We consider the case $\Psi = \npmatrix{-1&-1\\1&0}$, whence $F_1(x) := F_\Psi(x) = x^{q^2+1}+x+1$. The order of $s=[\Psi]$ is three, and 
\begin{align*}
O_G(x) &=  (x-y)(x-s(y))(x-s^2(y))\\
&= (x-y)\left(x+\frac{y+1}{y}\right)\left(x+\frac{1}{y+1}\right)\\
&= x^3+\left(\frac{1+3y-y^3}{y(y+1)}\right)x^2+\left(\frac{1-3y^2-y^3}{y(y+1)}\right)x-1\\
&= P_{\delta,1}(x),
\end{align*}
where $\delta = \frac{1+3y-y^3}{y(y+1)}$. Thus all irreducible cubic factors of $x^{q^2+1}+x+1$ over $\FF_{q^2}$ are of the form $P_{\delta,1}(x)$ for some $\delta$, and since there are precisely two roots of $x^{q^2+1}+x+1$ in $\FF_{q^2}$, we get $\frac{q^2-1}{3}$ such irreducible factors. Similarly, we can calculate that all irreducible cubic factors of $F_2(x) := x^{q^2+1}+x^{q^2}+1$ over $\FF_{q^2}$ are of the form $P_{\delta,1}(x)$ for some $\delta$. Since these polynomials cannot have any irreducible cubic factors in common, together with the count of the number of irreducibles of the form $P_{\delta,1}(x)$ performed in Lemma \ref{lem:pdacount}, we get the following.

\begin{theorem}
Every irreducible cubic polynomial of the form $P_{\delta,1}(x)$ is a factor of $F_1(x)F_2(x) = (x^{q^2+1}+x+1)(x^{q^2+1}+x^{q^2}+1)$, and every irreducible cubic factor of $F_1(x)F_2(x)$ is of the form $P_{\delta,1}(x)$.
\end{theorem}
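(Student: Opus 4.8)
The plan is to deduce the theorem from a cardinality count. Let $\mathcal{A}$ be the set of monic irreducible cubic polynomials of $\FF_{q^2}[x]$ dividing $F_1(x)F_2(x)$, and let $\mathcal{B}$ be the set of irreducible polynomials of the form $P_{\delta,1}(x)$. The forward inclusion $\mathcal{A}\subseteq\mathcal{B}$ is already furnished by the discussion preceding the statement: an irreducible polynomial dividing $F_1F_2$ must divide $F_1$ or $F_2$, and every irreducible cubic factor of $F_1$ or of $F_2$ is a specialisation of the orbit polynomial $O_G(x)=P_{\delta,1}(x)$. Hence it suffices to prove $|\mathcal{A}|=|\mathcal{B}|<\infty$; then $\mathcal{A}=\mathcal{B}$, which is precisely the conjunction of the two claims.

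To compute $|\mathcal{A}|$ I would first record that $F_1$ and $F_2$ are squarefree (a one-line derivative check: $F_1(x)=xF_1'(x)+1$ with $F_1'(x)=x^{q^2}+1$, and $F_2'(x)=x^{q^2}$ with $F_2(0)=1$). By the analysis preceding the statement, $F_1$ has exactly $\tfrac{q^2-1}{3}$ irreducible cubic factors. Since $F_2(x)=x^{q^2+1}F_1(1/x)$ and $0$ is a root of neither, the reciprocation $g(x)\mapsto x^{\deg g}g(1/x)$ is a degree- and irreducibility-preserving bijection between the irreducible factors of $F_1$ and those of $F_2$; thus $F_2$ likewise has exactly $\tfrac{q^2-1}{3}$ irreducible cubic factors (the reciprocal of $P_{\delta,1}(x)$ being $-P_{-\delta-3,1}(x)$, consistent with the ``similarly'' in the text). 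Finally, $F_1$ and $F_2$ share no irreducible cubic factor, since a common factor would divide $\gcd(F_1,F_2)=\gcd(F_1,F_2-F_1)=\gcd(F_1,x^{q^2}-x)$, which is a product of linear polynomials. Combined with squarefreeness, the irreducible cubic factors of $F_1F_2$ are exactly the disjoint union of those of $F_1$ and those of $F_2$, each occurring once, so $|\mathcal{A}|=\tfrac{2(q^2-1)}{3}$.

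For $|\mathcal{B}|$ I would use Lemma \ref{lem:pdacount}, whose second assertion is that $\tfrac{2(q^2-1)}{3}$ irreducible polynomials of the form $P_{\delta,1}$ satisfy Condition (1). For $\alpha=1$ the quantity $\tfrac{4-\alpha^{q+1}}{3\alpha^{q+1}}$ equals $1$, a nonzero square in $\FF_q$, and by Lemma \ref{lem:pdared} no irreducible $P_{\delta,1}$ satisfies $(\alpha+3\delta^{-q})^{q+1}=1$; hence by Theorem \ref{thm:character} \emph{every} irreducible $P_{\delta,1}(x)$ automatically satisfies Condition (1). Therefore the set counted by Lemma \ref{lem:pdacount} is all of $\mathcal{B}$ (distinct $\delta$ give distinct polynomials, as they differ in the coefficient of $x^2$), so $|\mathcal{B}|=\tfrac{2(q^2-1)}{3}$.

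Putting the two counts together with the inclusion $\mathcal{A}\subseteq\mathcal{B}$ gives $\mathcal{A}=\mathcal{B}$, which says both that every irreducible cubic factor of $F_1F_2$ has the form $P_{\delta,1}$ and that every irreducible $P_{\delta,1}$ divides $F_1F_2$. The step I expect to be the real obstacle is the one I have delegated to the preceding discussion, namely that $F_1$ has \emph{exactly} $\tfrac{q^2-1}{3}$ irreducible cubic factors: this requires the number of roots of $F_1$ in $\FF_{q^2}$ to be pinned down, irreducible quadratic factors to be excluded (e.g.\ by a norm argument over $\FF_{q^4}$), and the factorisation theorem for $F_\Psi$ to force the remaining factors to be cubic. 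Once that count is secured, everything else is routine bookkeeping with cardinalities.
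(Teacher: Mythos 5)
Your proposal is correct and takes essentially the same route as the paper, whose ``proof'' of this theorem is precisely the preceding discussion: the inclusion comes from the orbit-polynomial computation, each of $F_1$ and $F_2$ has exactly $\tfrac{q^2-1}{3}$ irreducible cubic factors, all of the form $P_{\delta,1}(x)$, these factors are distinct, and the total $\tfrac{2(q^2-1)}{3}$ matches the count of irreducible $P_{\delta,1}(x)$ from Lemma \ref{lem:pdacount} (which, as you correctly note via Lemma \ref{lem:pdared} and Theorem \ref{thm:character}, all satisfy Condition (1)). The details you add---squarefreeness, the reciprocal bijection $F_2(x)=x^{q^2+1}F_1(1/x)$, and the $\gcd(F_1,F_2)\mid x^{q^2}-x$ argument for disjointness---are exactly the points the paper leaves implicit, and the step you flag as the crux (pinning down the two $\FF_{q^2}$-roots and excluding quadratic factors of $F_1$) is likewise handled there only by appeal to the cited factorisation theorem for $F_\Psi$.
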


Note that if $[\Psi]\ne[\Phi]$, then $F_\Psi(x)$ and $F_{\Phi}(x)$ can have at most a quadratic factor in common. Therefore if $P(x)$ divides $F_\Psi(x)$ and $Q(x)$ divides $F_{\Phi}(x)$ where $P$ and $Q$ have degree greater than two, then $P(x)$ and $Q(x)$ are equivalent if and only if $F_\Psi(x)$ and $F_{\Phi}(x)$ are equivalent. Moreover, any group element mapping $P(x)$ to $Q(x)$ must also map $F_\Psi(x)$ to $F_{\Phi}(x)$.

The element $\phi_{0,1}$ maps $F_2(x)$ to $F_1(x)$, and so every irreducible factor of $F_2(x)$ is equivalent to an irreducible factor of $F_1(x)$. Hence to calculate the equivalence classes amongst the polynomials of the form $P_{\delta,1}(x)$, it suffices to calculate equivalences between the divisors of $F_1(x)$ via elements of the stabiliser of $F_1(x)$ in $U$.

To this end, we now demonstrate how the action of the group $U$ manifests on polynomials of the form $F_{\Psi}(x)$.
\begin{lemma}
Let $\phi = \npmatrix{u^q&v\\v^q&u}$ with $u^{q+1}-v^{q
+1}\ne 0$. Then $F_\Psi^\phi(x) = (u^{q+1}-v^{q+1})F_{\phi^{-1}\Psi\phi}$.
\end{lemma}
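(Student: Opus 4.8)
The plan is to unwind the definition of the action $F_\Psi \mapsto F_\Psi^\phi$ and reduce the claimed identity to a short comparison of $2\times 2$ matrix products. For a matrix $A = \npmatrix{\alpha & \beta\\ \gamma & \delta} \in \GL(2,q^2)$ I will write $n_A(x) = \alpha x + \beta$ and $d_A(x) = \gamma x + \delta$ for the numerator and denominator of the associated fractional linear map $m_A(x) = n_A(x)/d_A(x)$; the facts I will use repeatedly are the composition rules $n_{AB}(x) = \alpha_A\, n_B(x) + \beta_A\, d_B(x)$ and $d_{AB}(x) = \gamma_A\, n_B(x) + \delta_A\, d_B(x)$ (just the matrix product $AB$ read off by rows), together with the observation that $F_A(x) = d_A(x)\,x^{q^2} - n_A(x)$ is $\Fqt$-linear in the entries of $A$. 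Note that for $\phi = \phi_{u,v}$ we have $m_\phi(x) = \frac{v+u^q x}{u+v^q x}$ and $d_\phi(x) = u + v^q x$, so the action (as in Corollary \ref{cor:polequiv}) is $F_\Psi^\phi(x) = (u+v^q x)^{q^2+1} F_\Psi(m_\phi(x)) = d_\phi(x)^{q^2+1} F_\Psi(m_\phi(x))$.

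\textbf{Step 1: a manifestly polynomial form.} First I would substitute $m_\phi(x) = n_\phi(x)/d_\phi(x)$ into $F_\Psi(y) = d_\Psi(y)y^{q^2} - n_\Psi(y)$ and apply the composition rules to get $d_\Psi(m_\phi(x)) = d_{\Psi\phi}(x)/d_\phi(x)$ and $n_\Psi(m_\phi(x)) = n_{\Psi\phi}(x)/d_\phi(x)$; multiplying through by $d_\phi(x)^{q^2+1}$ then yields
\[
F_\Psi^\phi(x) = d_{\Psi\phi}(x)\, n_\phi(x)^{q^2} - n_{\Psi\phi}(x)\, d_\phi(x)^{q^2}.
\]
This is precisely where the hypothesis $\phi \in U$ enters: because $u,v \in \Fqt$ we have $u^{q^2}=u$ and $v^{q^2}=v$, hence $n_\phi(x)^{q^2} = u^q x^{q^2} + v$ and $d_\phi(x)^{q^2} = v^q x^{q^2} + u$, which are honest polynomials in $x$, so $F_\Psi^\phi$ is a polynomial of degree $q^2+1$.

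\textbf{Step 2: expand and match.} Writing $\Psi\phi = \npmatrix{\alpha' & \beta'\\ \gamma' & \delta'}$, the displayed expression expands to
\[
F_\Psi^\phi(x) = (u^q\gamma' - v^q\alpha')\,x^{q^2+1} + (u^q\delta' - v^q\beta')\,x^{q^2} + (v\gamma' - u\alpha')\,x + (v\delta' - u\beta').
\]
On the other side, since $F$ is $\Fqt$-linear in matrix entries and $\phi^{-1} = (u^{q+1}-v^{q+1})^{-1}\,\mathrm{adj}(\phi)$ with $\mathrm{adj}(\phi) = \npmatrix{u & -v\\ -v^q & u^q}$, we have $(u^{q+1}-v^{q+1})\,F_{\phi^{-1}\Psi\phi}(x) = F_{\mathrm{adj}(\phi)\,\Psi\phi}(x)$. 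Computing the entries of $\mathrm{adj}(\phi)(\Psi\phi)$ and forming the corresponding $F$ produces exactly the four coefficients above, so $F_\Psi^\phi(x) = (u^{q+1}-v^{q+1})F_{\phi^{-1}\Psi\phi}(x)$.

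\textbf{Main point / obstacle.} There is no genuine obstacle here; the proof is bookkeeping, and the two ideas that keep it painless are (i) carrying a matrix through the fractional-linear substitution via the $n_A/d_A$ notation and its composition rule, and (ii) replacing $\phi^{-1}$ by $\mathrm{adj}(\phi)$ and using $\Fqt$-linearity of $F$ to absorb the determinant — which is exactly how the scalar $u^{q+1}-v^{q+1}$ appears. The only step requiring a moment's care is verifying that $F_\Psi^\phi$ is a polynomial at all, which is where $\phi$ having entries in $\Fqt$ (equivalently, $\phi$ commuting with the $q^2$-power Frobenius) is essential. A more conceptual variant is available — $F_A(y)=0 \iff y^{q^2}=m_A(y)$, and $m_{\phi^{-1}\Psi\phi} = m_\phi^{-1}\circ m_\Psi\circ m_\phi$ together with $m_\phi(x^{q^2}) = m_\phi(x)^{q^2}$ forces $F_\Psi^\phi$ and $F_{\phi^{-1}\Psi\phi}$ to share their zero set — but pinning down the scalar still needs a leading-coefficient computation, so the direct route above is cleaner and I would present that.
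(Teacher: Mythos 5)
Your proof is correct and takes essentially the same route as the paper: both verify the identity by directly computing $(u+v^qx)^{q^2+1}F_\Psi\left(\frac{v+u^qx}{u+v^qx}\right)$ and matching the result against the polynomial attached to the conjugated matrix. The difference is only presentational — the paper expands both sides fully in $a,b,c,d,u,v$ and writes out $\phi^{-1}\Psi\phi$ explicitly, whereas you organize the same computation through $\Psi\phi$, the adjugate, and the linearity of $A\mapsto F_A$ in the matrix entries, which avoids the explicit coefficient expansion.
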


\begin{proof}
We directly compute $F_\Psi^\phi$ as follows.
\begin{align*}
F_\Psi^\phi(x)&=(u+v^q x)^{q^2+1}F_\Psi\left(\frac{v+u^q x}{u+v^q x}\right) \\ &=(cu^{2q} +(a+b)u^qv^q +dv^{2q})x^{q^2 +1} +(au^{q+1} +cu^qv +duv^q +bv^{q+1})x^{q^2}\\
&\,\,\,+(bu^{q+1} +cu^qv +duv^q +av^{q+1})x +du^2 +(a+b)uv +cv^2\\
&=(u^{q+1}-v^{q+1})F_{\phi^{-1}\Psi\phi}(x),
\end{align*}
where the final equality holds since
\begin{align*}
\phi^{-1}\Psi\phi &= \frac{1}{u^{q+1}-v^{q+1}}\begin{pmatrix} u & -v\\ -v^q & u^q\end{pmatrix} \begin{pmatrix} -b & -d\\ c & a\end{pmatrix} \begin{pmatrix} u^q & v\\ v^q & u \end{pmatrix}\\
&=\frac{1}{u^{q+1}-v^{q+1}} \begin{pmatrix} -(bu^{q+1} +cu^qv +duv^q +av^{q+1}) & -(du^2 +(a+b)uv +cv^2)\\
cu^{2q} +(a+b)u^qv^q +dv^{2q} & au^{q+1} +cu^qv +duv^q +bv^{q+1}
\end{pmatrix}.
\end{align*}
\end{proof}

Next, we apply this to calculate the subgroup of $U$ stabilising $F_1(x)$, and hence permuting its irreducible cubic factors.

\begin{lemma}\label{lem:stabiliser}
The stabiliser of $F_1(x) = x^{q^2 +1} +x +1$ in $U$ is 
\[\{\phi_{u,
u^q-u} : u \in \Fqt^\times, u^{q-1} \ne (1 \pm \sqrt{-3})/2 \}.
\]
\end{lemma}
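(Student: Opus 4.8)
The plan is to identify the stabiliser as the centraliser of $\Psi = \npmatrix{-1&-1\\1&0}$ inside $U$, and then to solve the resulting linear conditions on $u,v$. By the preceding lemma, for $\phi=\phi_{u,v}\in U$ we have $F_1^\phi(x) = F_\Psi^\phi(x) = (u^{q+1}-v^{q+1})F_{\phi^{-1}\Psi\phi}(x)$. Since $M\mapsto F_M$ is a linear bijection between matrices and polynomials of the shape $cx^{q^2+1}+ax^{q^2}+bx+d$, the element $\phi$ fixes $F_1(x)$ up to a scalar if and only if $\phi^{-1}\Psi\phi$ is a scalar multiple of $\Psi$.

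First I would rule out all scalars except $1$: if $\phi^{-1}\Psi\phi = c\Psi$ then comparing determinants gives $c^2=1$, while comparing traces gives $c\,\tr(\Psi)=\tr(\Psi)$, and since $\tr(\Psi)=-1\ne0$ this forces $c=1$. Hence $\phi_{u,v}$ lies in the stabiliser if and only if it commutes with $\Psi$.

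Next I would extract the commuting condition directly: multiplying out $\phi_{u,v}\Psi$ and $\Psi\phi_{u,v}$ and comparing entries, the $(1,2)$ entries give $u^q=u+v$, i.e. $v = u^q-u$, while the $(1,1)$ entries give $v^q=-v$, which is automatic once $v=u^q-u$ since then $v^q = u^{q^2}-u^q = u-u^q=-v$. So the matrices $\phi_{u,v}$ commuting with $\Psi$ are exactly those with $v = u^q-u$.

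Finally I would intersect with $U$, that is, impose $u^{q+1}\ne v^{q+1}$. With $v=u^q-u$ one has $v^q=-v$ and hence $v^{q+1}=-(u^q-u)^2$, so the condition becomes $u^{q+1}+(u^q-u)^2\ne0$; expanding and dividing by $u^2$ (the value $u=0$ gives $v=0$ and fails $u^{q+1}\ne v^{q+1}$ anyway) this is $u^{2q-2}-u^{q-1}+1\ne0$, i.e. $t^2-t+1\ne0$ for $t=u^{q-1}$, equivalently $u^{q-1}\ne\frac{1\pm\sqrt{-3}}{2}$. This gives exactly the claimed set. I do not anticipate a serious obstacle; the only points needing care are pinning the scalar down to $1$ (so that the stabiliser is genuinely the centraliser of $\Psi$, not something larger) and the short computation turning $u^{q+1}\ne v^{q+1}$ into the stated exclusion.
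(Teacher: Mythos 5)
Your proof is correct, and while it starts from the same place as the paper (the preceding lemma reducing the action on $F_1$ to conjugation of $\Psi=\left(\begin{smallmatrix}-1&-1\\1&0\end{smallmatrix}\right)$), it resolves the resulting matrix equation by a genuinely different and cleaner device. The paper keeps the scalar: it writes $\phi^{-1}\Psi\phi=\lambda\Psi$ entrywise via the explicit conjugation formula, extracts the two conditions $v^{q+1}+uv^q+u^qv=0$ and $(u^2+uv+v^2)^q=u^{q+1}+uv^q+u^qv=u^2+uv+v^2$, and then needs a short case analysis ($v=0$ versus $v\ne 0$) to show these are equivalent to $v=u^q-u$. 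You instead note that conjugation preserves the trace and $\tr(\Psi)=-1\ne 0$ in characteristic $p>3$, so any relation $\phi^{-1}\Psi\phi=c\Psi$ forces $c=1$; this reduces the problem to the centraliser of $\Psi$ in $U$, which falls out of a direct comparison of the entries of $\phi_{u,v}\Psi$ and $\Psi\phi_{u,v}$ (the $(1,2)$ entry gives $v=u^q-u$, and the other entries are then automatic since $v^q=-v$). Your treatment of the invertibility constraint $u^{q+1}\ne v^{q+1}$, giving $u\ne 0$ and $u^{q-1}\ne(1\pm\sqrt{-3})/2$, is identical to the paper's. One point to keep explicit: ``stabiliser of $F_1$'' is meant up to scalar multiples (the paper's proof allows an arbitrary $\lambda$, and this is the relevant notion for tracking sets of monic irreducible factors); your reading agrees with this, and your trace argument is exactly what shows the up-to-scalar condition collapses to genuine commutation, which is what lets you bypass the paper's case analysis.
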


\begin{proof}
Let $\phi =\phi_{u,v}= \begin{pmatrix}
u^q & v\\
v^q & u
\end{pmatrix}$ with $u^{q+1} \neq v^{q+1}$ and let $\lambda \in \Fqt$. Then the matrix equation
\[\phi^{-1}\psi\phi = \lambda\psi\]
holds if and only if
\[\frac{1}{u^{q+1}-v^{q+1}}\begin{pmatrix}
-(u^{q+1} + uv^q +u^qv) & -(u^2 +uv +v^2)\\
(u^2 +uv +v^2)^q & v^{q+1} +uv^q +u^qv
\end{pmatrix} = \lambda\begin{pmatrix}
-1 & -1\\
1 & 0
\end{pmatrix}.\]
This equality holds if and only if $u^{q+1}\ne v^{q+1}$ and
\[v^{q+1} +uv^q +u^qv = 0 \tag{I}\]
and
\[(u^2 +uv +v^2)^q = u^{q+1} + uv^q +u^qv = u^2 +uv +v^2. \tag{II}\]
We now show that these  conditions are equivalent to $v^q = u-u^q$. First suppose $v^q = u-u^q$. Then equations (I) and (II) hold. Furthermore $u^{q+1}=v^{q+1}$ if and only if $(u-u^q)(u^q -u)= u^{q+1}$. Rearranging, we get $u^2(u^{2(q-1)} -u^{q-1} +1) = 0$, which occurs if and only if $u=0$ or $u^{q-1} = (1 \pm \sqrt{-3})/2$.

Now suppose (I) and (II) hold. If $v =0$, then (II) gives $u^{q+1} = u^2$, so $u=u^q$ and hence $v^q = -v = 0 = u-u^q$. If $v \ne 0$, we have $u^q = -(u+v)v^{q-1}$ from (I). Hence
\[u^{q+1} + uv^q +u^qv = u^2 +uv +v^2 \iff (u^2+uv+v^2)(v^q +v) =0\]
and
\[(u^2 +uv +v^2)^q = u^{q+1} + uv^q +u^qv = u^2 +uv +v^2 \iff (u^2+uv+v^2)(v^q +v)(v^q -v) =0.\]
If $0 = u^2 +uv +v^2 = u^{q+1} +uv^q +u^qv$, then $u^{q+1} = v^{q+1}$, which is not allowed. Thus $v^q = -v$. It follows from (I) that $v = u^q-u$. As before, the condition $u^{q+1}\ne v^{q+1}$ gives that $u\ne 0$ and $u^{q-1} \ne (1 \pm \sqrt{-3})/2$, completing the proof.
\end{proof}

This allows us to compute the number of projective equivalence classes amongst the polynomials $P_{\delta,1}(x)$, as well as the size of the union of these equivalence classes. As we will observe, this matches the total number of irreducible cubics satisfying Condition (1), implying that every equivalence class contains a polynomial of the form $P_{\delta,1}(x)$.

\begin{theorem}
\label{thm:totalcount}
The number of projective equivalence classes of irreducible polynomials of the form $P_{\delta,1}(x)$ is
\[
\left\{
\begin{array}{cc}
\frac{q-1}{3}&\textrm{if }q\equiv 1\mod 3,\\
\frac{q+1}{3}&\textrm{if }q\equiv 2\mod 3.
\end{array}\right.
\]
Moreover the number of monic irreducible polynomials projectively equivalent to some $P_{\delta,1}(x)$ is 
\[
\left\{
\begin{array}{cc}
\frac{q(q-1)(q^2-1)}{3}&\textrm{if }q\equiv 1\mod 3,\\
\frac{q(q+1)(q^2-1)}{3}&\textrm{if }q\equiv 2\mod 3.
\end{array}\right.
\]
\end{theorem}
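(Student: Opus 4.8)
The plan is to count the projective equivalence classes by reducing to an orbit count on $\Omega$, the set of irreducible cubic factors of $F_1(x)=x^{q^2+1}+x+1$, of which there are $\tfrac{q^2-1}{3}$. First I would observe that since every irreducible $P_{\delta,1}(x)$ divides $F_1(x)F_2(x)$ and $\phi_{0,1}$ carries $F_2$ to $F_1$, every such polynomial is projectively equivalent to an element of $\Omega$; moreover, by the remark following the factorisation theorem (the matrix $\Psi=\npmatrix{-1&-1\\1&0}$ being the one associated to every member of $\Omega$), any $\phi\in U$ mapping one element of $\Omega$ to a scalar multiple of another must map $F_1$ to a scalar multiple of itself, i.e.\ lie in $H:=\mathrm{Stab}_U(F_1)$ as computed in Lemma \ref{lem:stabiliser}. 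Hence two members of $\Omega$ are projectively equivalent exactly when they lie in the same orbit of $H$, equivalently of its image $\overline{H}$ in $\PGL(2,q^2)$, and so the number of projective equivalence classes is the number of $\overline{H}$-orbits on $\Omega$.

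The crux is to prove that $\overline{H}$ acts freely on $\Omega$. I would identify a member of $\Omega$ with the three-element set $O=\{y,s(y),s^2(y)\}$ of roots of the corresponding cubic under $s=[\Psi]$ (these points are distinct, as the roots of an irreducible cubic factor lie outside $\Fqt$ whereas the fixed points of $s$ lie in $\Fqt$). Every $\bar\phi\in\overline{H}$ centralises $s$, since the defining relation $\phi^{-1}\Psi\phi=\lambda\Psi$ of $H$ in the proof of Lemma \ref{lem:stabiliser} says precisely that conjugation by $\phi$ fixes $[\Psi]$. Thus if $\bar\phi$ stabilises $O$, then $\bar\phi|_O$ commutes with the $3$-cycle $s|_O$ and so equals $(s|_O)^i$ for some $i$; consequently $\bar\phi s^{-i}$ fixes the three distinct points of $O$ and is therefore the identity of $\PGL(2,q^2)$, forcing $\bar\phi=s^i\in\overline{H}$. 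Now $|\overline{H}|=|H|/(q-1)$, and a count of $|H|$ from the parametrisation in Lemma \ref{lem:stabiliser} — it runs over those $u\in\Fqt^\times$ for which $u^{q-1}$ avoids the primitive sixth roots of unity $\tfrac{1\pm\sqrt{-3}}{2}$, while $u\mapsto u^{q-1}$ is $(q-1)$-to-one onto the group of $(q+1)$-st roots of unity in $\Fqt^\times$, which contains those sixth roots iff $3\mid q+1$ — gives $|H|=q^2-1$ when $q\equiv1\bmod3$ and $|H|=(q-1)^2$ when $q\equiv2\bmod3$; hence $|\overline{H}|=q+1$ or $q-1$ respectively. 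In both cases $3\nmid|\overline{H}|$, so $\overline{H}$ has no element of order $3$, whence $s^i=1$ and $\bar\phi=1$, and the action is free.

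Consequently the number of $\overline{H}$-orbits on $\Omega$, hence of projective equivalence classes, is $\tfrac{|\Omega|}{|\overline{H}|}$, namely $\tfrac{(q^2-1)/3}{q+1}=\tfrac{q-1}{3}$ when $q\equiv1\bmod3$ and $\tfrac{(q^2-1)/3}{q-1}=\tfrac{q+1}{3}$ when $q\equiv2\bmod3$. For the second assertion I would note that a linear fractional transformation defined over $\Fqt$ maps an irreducible cubic over $\Fqt$ to another one (it permutes the size-$3$ Galois orbits of roots), so each projective equivalence class of a $P_{\delta,1}(x)$ is exactly the $U$-orbit of a monic irreducible cubic, and distinct classes are disjoint. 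For $P\in\Omega$, the stabiliser $\mathrm{Stab}_U(P)$ lies in $H$ and fixes the associated set $O$, so by the freeness just proved its image in $\overline{H}$ is trivial; thus $\mathrm{Stab}_U(P)$ equals the scalar subgroup $\Fq^\times I$, of order $q-1$. Each class therefore has $|U|/(q-1)=q(q-1)^2(q+1)/(q-1)=q(q^2-1)$ monic irreducible members, and multiplying by the number of classes gives $\tfrac{q(q-1)(q^2-1)}{3}$ when $q\equiv1\bmod3$ and $\tfrac{q(q+1)(q^2-1)}{3}$ when $q\equiv2\bmod3$, as required.

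I expect the freeness of the $\overline{H}$-action to be the main obstacle: concretely, one must pin down $|H|$ exactly — this is where the sixth-root-of-unity bookkeeping splits the argument into the two residue classes of $q$ modulo $3$ — and then exploit that $3\nmid|\overline{H}|$ in both cases, which is exactly what rules out the nontrivial point stabilisers that could otherwise arise from $\overline{H}$ commuting with the order-$3$ element $s$.
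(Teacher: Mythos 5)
Your proposal is correct and takes essentially the same route as the paper: reduce to counting orbits of the stabiliser of $F_1(x)$ from Lemma \ref{lem:stabiliser} on the $\frac{q^2-1}{3}$ irreducible cubic divisors of $F_1(x)$, compute that stabiliser's size ($q^2-1$, resp.\ $(q-1)^2$, with $q-1$ scalars) according to $q \bmod 3$, and finish with the Orbit--Stabiliser Theorem for the second count. The only difference is that you explicitly justify that the induced action of $\overline{H}$ on these divisors is free (via centralising $s=[\Psi]$ and $3\nmid|\overline{H}|$), a point the paper asserts implicitly when it states that the orbits have size $q+1$, resp.\ $q-1$.
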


\begin{proof}
Recall that in order to calculate the number of equivalence classes of polynomials of the form $P_{\delta,1}(x)$ satisfying Condition (1), it suffices to calculate the equivalence classes amongst the divisors of $F_1(x) = x^{q^2+1}+x+1$ under the stabiliser of $F_1(x)$. As shown in Lemma \ref{lem:stabiliser}, this consists of matrices of the form $\phi_{u,u^q-u}$ where  $u^2(u^{q-1}-u^{2(q-1)}-1)\ne 0$. 

There are $q^2-1$ such matrices when $q\equiv 1\mod 3$, and $(q-1)^2$ such matrices when $q\equiv 2\mod 3$, $q-1$ of which are scalar multiples of the identity. Therefore the divisors of $F_1(x)$ are partitioned into equivalence classes of size $q+1$ (resp. $q-1$) under this action when $q\equiv 1\mod 3$ (resp. $q\equiv 2\mod 3$), and so there are $\frac{q-1}{3}$ equivalence classes when $q\equiv 1\mod 3$ and $\frac{q+1}{3}$ equivalence classes when $q\equiv 2\mod 3$.

A further application of the Orbit-Stabiliser Theorem returns the claimed number of polynomials equivalent to some $P_{\delta,1}$.
\end{proof}

Choosing canonical representatives for each equivalence class among the $P_{\delta,1}$ polynomials is not straightforward. The following lemma establishes criteria for equivalence amongst polynomials of this shape.

\begin{lemma}\label{lem:epsilonvalues}
The polynomials $P_{\delta,1}(x)$ and $P_{\epsilon,1}(x)$ are projectively equivalent if and only if
\begin{align*}
\epsilon \in &\left\{\frac{9w(w-1) +\delta (w^3 -3w +1)}{w^3 -3w^2 +1 -\delta w(w-1)} : w^{q+1} =1, w \ne (1 \pm \sqrt{-3})/2 \right\}\\
&\,\,\,\cup \left\{\frac{-3(w^3 -3w^2 +1) -\delta(w^3 -3w +1)}{w^3 -3w +1 +\delta w(w-1)} : w^{q+1} =1, w \ne (1 \pm \sqrt{-3})/2 \right\}. 
\end{align*}
\end{lemma}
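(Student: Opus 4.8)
The plan is to apply the orbit-polynomial machinery developed above directly. By the discussion preceding this lemma, two irreducible cubic factors $P(x)$ and $Q(x)$ of $F_1(x) = x^{q^2+1}+x+1$ are projectively equivalent if and only if there is an element $\phi$ of the stabiliser of $F_1(x)$ in $U$ mapping one to the other, and any such $\phi$ must map $F_1$ to itself. So I would start from the formula $F_\Psi^\phi(x) = (u^{q+1}-v^{q+1})F_{\phi^{-1}\Psi\phi}(x)$ from the lemma two before this one, specialise it to $F_1$ (so $\Psi = \npmatrix{-1&-1\\1&0}$), and take $\phi = \phi_{u,u^q-u}$ ranging over the stabiliser as described in Lemma \ref{lem:stabiliser}, writing $w := u^{q-1}$ so that $w^{q+1}=1$ (and $w \ne (1\pm\sqrt{-3})/2$ to keep $\phi$ invertible).

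The first main step is to track what happens to the \emph{roots}. If $P_{\delta,1}(x)$ is the specialisation $O_G(x)$ of the orbit polynomial at $y$, i.e. $\delta = \frac{1+3y-y^3}{y(y+1)}$, then $\phi$ acts on roots via the Möbius transformation $x \mapsto \frac{v+u^q x}{u+v^q x}$, so the new cubic has roots given by applying this map to $y, s(y), s^2(y)$; equivalently, the new cubic is $P_{\epsilon,1}(x)$ for $\epsilon = \frac{1+3y'-(y')^3}{y'(y'+1)}$ where $y'$ is the image of $y$. The computational heart is then to eliminate $y$: express $\epsilon$ as a function of $\delta$ and $w$ by substituting $y' = \frac{v+u^q y}{u+v^q y}$ (with $v = u^q - u$, so the transformation depends only on $w = u^{q-1}$ after clearing a common factor of $u$) into the formula for $\epsilon$, and simplify using the relation defining $\delta$ in terms of $y$. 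The output should be the first rational expression in the claimed set. I would then observe that one obtains a second family of expressions because there are two essentially different ways the stabiliser interacts with the factor — more precisely, composing with the automorphism $\phi_{0,1}$ (which swaps $F_1$ and $F_2$, cf.\ the discussion after Theorem on factors of $F_1 F_2$) or, equivalently, accounting for the two distinct cosets/the extra symmetry $y \mapsto$ the other branch; carrying this through gives the $-3(w^3-3w^2+1)-\delta(w^3-3w+1)$ numerator expression. Finally I would check that these exhaust all equivalences: every $\phi$ in the $U$-stabiliser of $F_1$ is one of the $\phi_{u,u^q-u}$, and the two families together have the right cardinality $q+1$ (resp.\ $q-1$) matching the orbit size computed in Theorem \ref{thm:totalcount}, so no equivalence is missed and none is spurious.

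I expect the main obstacle to be the bookkeeping in the rational-function elimination: substituting $y' = \frac{(u^q-u)+u^q y}{u + (u^q-u)^q y}$ into $\frac{1+3y'-(y')^3}{y'(y'+1)}$, clearing denominators, dividing through by the appropriate power of $u$ to get everything in terms of $w = u^{q-1}$, and then recognising the result as a Möbius-type transformation of $\delta$ with the stated coefficients $9w(w-1)$, $w^3-3w+1$, $w^3-3w^2+1$, $-w(w-1)$ — and then doing it again for the second family. This is routine in principle (it is a single-variable rational identity once one fixes which branch of the orbit one is tracking), but it is delicate, and the cleanest route is probably to verify the claimed $\epsilon(\delta,w)$ formulas satisfy $F_{\phi^{-1}\Psi\phi}$-type identities rather than deriving them ab initio; I would also sanity-check at $w=1$ (where $\epsilon=\delta$, the identity) and cross-check the total count against Theorem \ref{thm:totalcount} to confirm both families are needed and that together they give each equivalence exactly once up to the scalar action.
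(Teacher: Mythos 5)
Your proposal is correct and follows essentially the same route as the paper: both arguments reduce the problem to equivalences realised by the stabiliser elements $\phi_{u,u^q-u}$ of $F_1(x)$ together with their $\phi_{0,1}$-translates $\phi_{u^q-u,u}$, set $w=u^{q-1}$ (so $w^{q+1}=1$, with the excluded values coming from invertibility), and compute $\epsilon$ as a rational function of $\delta$ and $w$, one family per coset. The only divergence is mechanical: the paper gets the two formulas by applying Corollary \ref{cor:polequiv} and comparing coefficients directly, while you track roots through the orbit-polynomial parametrisation and eliminate $y$ — which is legitimate (stabiliser elements commute with $s$ in $\PGL(2,q^2)$, so the eliminated expression depends only on $\delta$ and $w$), modulo the harmless point that the roots of the transformed polynomial are preimages rather than images under the M\"obius map, which merely reparametrises $w$.
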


\begin{proof}
We have determined in this section that two polynomials of the form $P_{\delta,1}(x)$ are equivalent via $\phi_{u,u^q-u}$ or $\phi_{0,1}\phi_{u,u^q-u} = \phi_{u^q -u,u}$, where $u^{q-1} \ne (1 \pm \sqrt{-3})/2$.

First let $v = u^q -u$. Then by Corollary \ref{cor:polequiv}, $P_{\delta,1}(x)$ and $P_{\epsilon,1}(x)$ are equivalent if and only if
\[\lambda P_{\epsilon,1}(x) = (u(x+1) -u^qx)^3 P_{\delta,1}\left(\frac{u^q(x+1) -u}{u(x+1) -u^qx}\right).\]
Comparing coefficients of these polynomials yields that
\[\epsilon = \frac{9u^{q-1}(u^{q-1}-1) +\delta (u^{3(q-1)} -3u^{q-1} +1)}{u^{3(q-1)} -3u^{2(q-1)} +1 -\delta u^{q-1}(u^{q-1}-1)}.\]

Now let $u = v^q-v$. Then $P_{\delta,1}(x)$ and $P_{\epsilon,1}(x)$ are equivalent if and only if
\[\lambda P_{\epsilon,1}(x) = (v^q(x+1) -v)^3 P_{\delta,1}\left(\frac{v(x+1) -v^qx}{v^q(x+1) -v}\right).\]
Comparing coefficients again returns
\[\epsilon = \frac{-3(v^{3(q-1)} -3v^{2(q-1)} +1) -\delta(v^{3(q-1)} -3v^{q-1} +1)}{v^{3(q-1)} -3v^{q-1} +1 +\delta v^{q-1}(v^{q-1}-1)}.\]

Replacing $u^{q-1}$ and $v^{q-1}$ with $w$ in both expressions for $\epsilon$ gives the stated result.
\end{proof}

We now consider the question of when $P_{\delta,1}(x)$ is equivalent to $P_{\delta,1}^\sigma(x) = P_{\delta^q,1}(x)$. This is necessary in order to determine the equivalence classes (rather than projective equivalence classes). Furthermore this demonstrates that all of the $2$-spreads obtained have full automorphism group strictly larger than the group $C$.

\begin{lemma}\label{lem:projequiv}
Suppose $P_{\delta,1}(x)$ and $P_{\delta^q,1}(x)$ are irreducible and satisfy Condition (1). Then $P_{\delta,1}(x)$ and $P_{\delta^q,1}(x)$ are projectively equivalent. Hence two irreducible cubics satisfying Condition (1) are equivalent if and only if they are projectively equivalent.
\end{lemma}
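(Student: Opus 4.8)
The plan is to exhibit, for each $\delta$, an explicit element of the stabiliser of $F_1(x)$ in $U$ that realises the equivalence between $P_{\delta,1}(x)$ and $P_{\delta^q,1}(x)$, thereby showing the two are even projectively equivalent and in particular equivalent. Recall from Theorem \ref{thm:character} and the discussion preceding Lemma \ref{lem:stabiliser} that every irreducible $P_{\delta,1}(x)$ divides $F_1(x) = x^{q^2+1}+x+1$, and that the field automorphism $\sigma: x \mapsto x^q$ sends $P_{\delta,1}(x)$ to $P_{\delta^q,1}(x)$. So $P_{\delta,1}^\sigma(x) = P_{\delta^q,1}(x)$ is again a cubic factor of $F_1^\sigma(x)$; but $F_1(x)$ has coefficients in $\Fp$, so $F_1^\sigma = F_1$, and hence $P_{\delta^q,1}(x)$ is itself a cubic factor of $F_1(x)$. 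By the remarks after Theorem \ref{thm:totalcount}, two cubic factors of $F_1(x)$ are (projectively) equivalent if and only if they lie in the same orbit under the stabiliser of $F_1(x)$ in $U$, which by Lemma \ref{lem:stabiliser} is the group of $\phi_{u,u^q-u}$ with $u^{q-1} \ne (1\pm\sqrt{-3})/2$.

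First I would use the parametrisation from Lemma \ref{lem:epsilonvalues}: setting $w = u^{q-1}$ and noting $w^{q+1}=1$, the orbit of $\delta$ under the stabiliser is
\[
\left\{\frac{9w(w-1) +\delta (w^3 -3w +1)}{w^3 -3w^2 +1 -\delta w(w-1)} : w^{q+1} =1,\ w \ne \tfrac{1 \pm \sqrt{-3}}{2}\right\}
\]
together with the companion family coming from $\phi_{0,1}\phi_{u,u^q-u}$. The goal is to show that $\delta^q$ lies in this set. I would look for a specific value of $w$ on the norm-one circle --- the natural candidate being $w$ a cube root of unity, or more promisingly $w$ chosen so that the Möbius transformation above is precisely the one induced by $\sigma$ on the parameter. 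Since $P_{\delta,1}$ arises as a specialisation $O_G(y)$ at some $y$ with $\delta = \frac{1+3y-y^3}{y(y+1)}$, applying $\sigma$ replaces $y$ by $y^q$, so $\delta^q$ corresponds to the specialisation at $y^q$; the relation between $y$ and $y^q$ (which, since $y$ is a root of a cubic factor of $F_1$, satisfies $y \cdot y^q \cdot (y^q)^q$-type identities, or more precisely $y^{q^2+1}+y+1=0$) should pin down the correct $w$. Concretely, from $y^{q^2} = -(y+1)/y = s(y)$ one reads off that $y^q$ and $y$ differ by an element of the cyclic group generated by $s$, and pulling this back through the stabiliser description gives an explicit $w$ (a function of $y^{q-1}$) witnessing $\delta^q \in \mathrm{Orb}(\delta)$.

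Alternatively, and perhaps more cleanly, I would argue at the level of $F_1$ directly: $P_{\delta,1}(x)$ and $P_{\delta^q,1}(x)$ are both degree-$3$ factors of $F_1(x)$, which factors over $\Fqt$ into linear factors and $\frac{q^2-1}{3}$ irreducible cubics; the Frobenius $\sigma$ permutes these cubic factors. I would show that the stabiliser of $F_1$ in $U$ acts on the set of cubic factors with orbits of the size computed in Theorem \ref{thm:totalcount} ($\frac{q+1}{3}$ factors of $F_1$ per orbit when $q\equiv1$, fewer when $q\equiv2$ --- wait, rather $q+1$ resp. $q-1$ polynomials per class, i.e. $\frac{q-1}{3}$ resp. $\frac{q+1}{3}$ classes), while $\sigma$ has order dividing $2$ on parameters and must preserve each orbit because $\sigma$ commutes, as a field automorphism, with the $\Fp$-rational action --- indeed every element $\phi_{u,u^q-u}$ of the stabiliser has a $\sigma$-conjugate $\phi_{u^q,u^{q^2}-u^q}$ again in the stabiliser, so the $U$-stabiliser is $\sigma$-stable and $\sigma$ normalises the orbit partition. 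Then $\delta$ and $\delta^q$ lie in $\sigma$-related orbits; showing these coincide reduces to checking that $\sigma$ fixes each orbit setwise, which follows once one verifies that the orbit of $\delta$ already contains a $\sigma$-fixed parameter or that $\sigma$ restricted to the (small) orbit is an inner automorphism of the stabiliser action. The main obstacle is this last point: ruling out the possibility that $\sigma$ swaps two distinct orbits. I expect to handle it by the explicit computation in the first approach --- producing the witnessing $w \in \mu_{q+1}$ by hand from the identity $y^{q^2+1}+y+1 = 0$ --- since an abstract orbit-counting argument alone does not obviously forbid a free $\sigma$-action on the set of orbits. The routine part is then just substituting this $w$ into the formula of Lemma \ref{lem:epsilonvalues} and simplifying to $\delta^q$, and noting that the excluded values $w = (1\pm\sqrt{-3})/2$ do not arise.
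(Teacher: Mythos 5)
Your overall framing (reduce to Lemma \ref{lem:epsilonvalues} and exhibit a norm-one $w$ realising $\epsilon=\delta^q$) matches the paper's starting point, but the proposal stops exactly where the real work begins: you never establish that such a $w$ exists. The explicit-construction route you sketch rests on a false step: from $y^{q^2}=s(y)$ you infer that ``$y^q$ and $y$ differ by an element of $\langle s\rangle$'', but $\langle s\rangle$ only relates $y$ to $y^{q^2}$ and $y^{q^4}$, i.e.\ to the other roots of the \emph{same} cubic factor; $y^q$ is a root of the \emph{other} factor $P_{\delta^q,1}$, and placing it in the $\langle s\rangle$-orbit of $y$ amounts to assuming $\delta^q=\delta$. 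Consequently no witnessing $w$ can simply be ``read off'' from $y^{q^2+1}+y+1=0$: the condition $\epsilon=\delta^q$ in Lemma \ref{lem:epsilonvalues} is a genuine cubic equation in $w$ over $\Fqt$, and proving that it has a solution on the norm-one circle is precisely the content of the lemma. Your fallback orbit-theoretic argument is, as you yourself concede, inconclusive: $\sigma$-stability of the stabiliser only shows that $\sigma$ permutes the orbits, and nothing you propose rules out $\sigma$ interchanging two distinct orbits.

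For comparison, the paper closes this gap with a concrete two-step argument. Setting $\delta^q$ equal to one of the expressions in Lemma \ref{lem:epsilonvalues} and clearing denominators gives, when $\delta\ne\delta^q$, a monic cubic $f(w)\in\Fqt[w]$ with constant term $1$ satisfying $w^{3q}f(w^{-q})=f(w)^q$; hence the root set of $f$ is stable under $\tau\mapsto\tau^{-q}$, so if $f$ is reducible over $\Fqt$ it factors as $(w-\tau)(w-\tau^{-q})(w-\nu)$ with $\nu=-\tau^{q-1}$, which automatically satisfies $\nu^{q+1}=1$. It then remains to show that $f$ can never be irreducible, which the paper does via Dickson's criterion (Lemma \ref{lem:dickson}): after depressing the cubic, one computes $R$ and verifies that the quantity $S$ is a perfect cube in both cases $q\equiv 1,2\pmod 3$. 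Neither of these two ideas --- the $\tau\mapsto\tau^{-q}$ symmetry forcing a norm-one root upon reducibility, and the cube computation excluding irreducibility --- appears in your proposal, so as it stands the proof is incomplete.
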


\begin{proof}
By Lemma \ref{lem:epsilonvalues}, it suffices to show the existence of some $w \in \Fqt$ such that $w^{q+1}=1$ and
\[\delta^q = \frac{-3(w^3 -3w^2 +1) -\delta(w^3 -3w +1)}{w^3 -3w +1 +\delta w(w-1)}\]
or
\[\delta^q = \frac{9w(w-1) +\delta (w^3 -3w +1)}{w^3 -3w^2 +1 -\delta w(w-1)}.\]
Suppose the latter equality holds. Then
\[(\delta-\delta^q)w^3 +(\delta^{q+1} +3\delta^q +9)w^2 -(\delta^{q+1} +3\delta +9)w +\delta -\delta^q =0.\]
If $\delta = \delta^q$, then clearly $P_{\delta,1}(x)=P_{\delta^q,1}(x)$, and so we assume that $\delta \ne \delta^q$. Then we have
\[w^3 +\frac{\delta^{q+1} +3\delta^q +9}{\delta-\delta^q}w^2 -\frac{\delta^{q+1} +3\delta +9}{\delta -\delta^q}w +1 =0.\]
The left-hand side of this equation is a cubic polynomial in $\Fqt[w]$. Denote this polynomial by $f(w)$. Since $w^{3q}f(w^{-q}) = f(w)^q$, if $\tau$ is a root of $f(w)$ then so is $\tau^{-q}$. Hence if $f(w)$ is reducible, it must factorise as
\[(w-\tau)(w-\tau^{-q})(w-\nu),\]
where $\tau \in \Fqt$ and $\nu \in \F_{q^6}$. Since $-\nu\tau^{1-q} = 1$, it follows that $\nu = -\tau^{q-1} \in \Fqt$ and so $w=\nu$ is a solution to the equation with $w^{q+1}=1$. 

Hence it only remains to show that $f(w)$ cannot be irreducible. We apply a change of variables, and apply Lemma \ref{lem:dickson}. We obtain that $f(w)$ is irreducible if and only if $g(w)=w^3+sw+t$ is irreducible, where
\[s = -\frac{(\delta^2 +3\delta +9)^{q+1}}{3(\delta -\delta^q)^2}\]
and
\[t = -\frac{(\delta^2 +3\delta +9)^{q+1}(2\delta^{q+1} +3\delta^q +3\delta +18)}{27(\delta^q -\delta)^3} = \frac{2\delta^{q+1} +3\delta^q +3\delta +18}{9(\delta^q -\delta)}s.\]
Using the same notation as Lemma \ref{lem:dickson}, we calculate that
\[R = \frac{(\delta^2 +3\delta +9)^{2(q+1)}}{(\delta -\delta^q)^4}.\]
Setting $\mu = \pm\sqrt{R}/9$, then
\[S = \frac{(\delta^2 +3\delta +9)^{q+1}(2\delta^{q+1} +3(1 \pm \sqrt{-3})\delta^q +3(1 \mp \sqrt{-3})\delta +18)}{54(\delta^q -\delta)^3}.\]
If $q \equiv 2 \mod 3$ then
\[S = \left(\frac{((\delta^2 +3\delta +9)(\delta +3(1 \pm \sqrt{-3})/2))^{(q+1)/3}}{3(\delta^q -\delta)}\right)^3.\]
If $q \equiv 1 \mod 3$ then
\[S = \left(\frac{(\delta +3(1 \mp \sqrt{-3})/2)^{(2q+1)/3}(\delta +3(1 \pm \sqrt{-3})/2)^{(q+2)/3}}{3(\delta^q -\delta)}\right)^3.\]
Hence $S$ is always a perfect cube, and so $f(w)$ cannot be irreducible. Therefore $P_{\delta,1}(x)$ is always equivalent to $P_{\delta^q,1}(x)$.
\end{proof}

\begin{remark}
Note that this implies that the full stabiliser of the $2$-spread $\ell_\epsilon^C$ in $\GammaL(1,q^6)$ contains elements not in $C$, namely the map $x\mapsto x^{q^3}$.

However, this does not imply that every irreducible cubic satisfying Condition (1) is equivalent to a polynomial with coefficients in $\Fq$; in fact, counterexamples can be easily found already when $q=5$.

Finally, we remark that it is not true that all polynomials satisfying Condition (1) are equivalent if and only if they are projectively equivalent. We have counterexamples for polynomials of degree $5$; this will be the subject of future work.
\end{remark}

We summarise this section with our main result on equivalence classes.

\begin{corollary}\label{cor:allequiv}
Every irreducible cubic in $\Fqt[x]$ satisfying Condition (1) is equivalent to one of the form $P_{\delta,1}$. Furthermore, the number of equivalence classes of irreducible cubics satisfying Condition (1) is 
\[
\left\{
\begin{array}{cc}
\frac{q-1}{3}&\textrm{if }q\equiv 1\mod 3,\\
\frac{q+1}{3}&\textrm{if }q\equiv 2\mod 3.
\end{array}\right.
\]
\end{corollary}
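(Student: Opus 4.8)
The plan is to close off the chain of enumeration results already in place. From Corollary~\ref{cor:countall} we know the exact number $N$ of monic irreducible cubics in $\Fqt[x]$ satisfying Condition~(1), and from Theorem~\ref{thm:totalcount} the exact number of monic irreducibles that are projectively equivalent to some $P_{\delta,1}(x)$. The first step is the elementary observation that $q(q-1)(q^2-1)=q(q-1)^2(q+1)$ and $q(q+1)(q^2-1)=q(q-1)(q+1)^2$, so these two counts coincide, in both residue classes of $q$ modulo $3$.

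Next I would show that the set $S$ of monic irreducibles projectively equivalent to some $P_{\delta,1}(x)$ is contained in the set $T$ of all monic irreducible cubics satisfying Condition~(1). By Corollary~\ref{cor:polequiv} (with $\sigma=\id$), every element of $S$ has the form $\lambda(u+v^qx)^3P_{\delta,1}\!\left(\tfrac{v+u^qx}{u+v^qx}\right)$ for some $u,v\in\Fqt$ with $u^{q+1}\ne v^{q+1}$, so each of its roots is the image of a root of $P_{\delta,1}(x)$ under an invertible linear fractional transformation; hence such a root still generates $\FF_{q^6}$ over $\Fqt$, so the polynomial is an irreducible cubic, and it defines a $2$-spread projectively equivalent to the one defined by $P_{\delta,1}(x)$, hence satisfies Condition~(1) (every irreducible $P_{\delta,1}(x)$ satisfies Condition~(1), by Theorem~\ref{thm:character} together with Lemma~\ref{lem:pdared}, since $\tfrac{4-1}{3}=1$ is a nonzero square in $\Fq$). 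Thus $S\subseteq T$; since $|S|=|T|=N$ by the first step, we conclude $S=T$. This is precisely the first assertion of the corollary, and by Lemma~\ref{lem:projequiv} the phrase ``projectively equivalent'' can be replaced by ``equivalent''.

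For the count of equivalence classes I would invoke Lemma~\ref{lem:projequiv} once more: on $T$ the relations of equivalence and projective equivalence agree, so the number of equivalence classes of irreducible cubics satisfying Condition~(1) equals the number of projective equivalence classes. Since every class contains some $P_{\delta,1}(x)$ by the first part, this number equals the number of projective equivalence classes among the $P_{\delta,1}(x)$, namely $\tfrac{q-1}{3}$ when $q\equiv 1\pmod 3$ and $\tfrac{q+1}{3}$ when $q\equiv 2\pmod 3$, by Theorem~\ref{thm:totalcount}. There is no genuine obstacle here beyond bookkeeping; the only point needing care is the inclusion $S\subseteq T$ — one must be certain the equivalence orbit of a good $P_{\delta,1}(x)$ never leaves the set of irreducible cubics satisfying Condition~(1), so that equality of cardinalities really forces $S=T$ rather than merely $S\subseteq T$. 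All the substantive work — Corollary~\ref{cor:countall}, the stabiliser computation behind Theorem~\ref{thm:totalcount}, and the descent from projective to ordinary equivalence in Lemma~\ref{lem:projequiv} — has already been carried out.
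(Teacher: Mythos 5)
Your proposal is correct and follows essentially the same route as the paper, whose proof simply cites Corollary \ref{cor:countall}, Theorem \ref{thm:totalcount} and Lemma \ref{lem:projequiv}: you have merely made explicit the bookkeeping (the coincidence of the two counts, the inclusion of the $U$-orbit of the irreducible $P_{\delta,1}$'s inside the irreducible cubics satisfying Condition (1), and the passage from projective equivalence to equivalence) that the paper leaves implicit.
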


\begin{proof}
This follows immediately from Corollary \ref{cor:countall}, Theorem \ref{thm:totalcount}, and Lemma \ref{lem:projequiv}.
\end{proof}

\section{Comparison with known results}

In this section we compare our results to the constructions and partial classifications which follow from the previous work of \cite{bartolitimpanella} and \cite{FL}.

\subsection{Results of Bartoli-Timpanella}

Recall from Lemma \ref{lem:bartoli} that $f_{a,b}(X) = X(1 +aX^{q(q-1)} +bX^{2(q-1)})$ is a permutation polynomial of $\Fqt$ if and only if $P(x) = x^3 +b^{-1}x +ab^{-1}$ satisfies Condition (1). In \cite{bartolitimpanella} the following was shown.

\begin{theorem}[\cite{bartolitimpanella}, Main Theorem]
Let $p>3$ be a prime and $q = p^h$, with $h \geq 1$. Then $f_{a,b}(X)$ is a permutation polynomial of $\Fqt$ if and only if either
\[\tag{PP1}\begin{cases}
a^qb^q = a(b^{q+1} -a^{q+1})\\
1 -4(ba^{-1})^{q+1} \text{ is a square in } \Fq^*,
\end{cases}\]
or
\[\tag{PP2}\begin{cases}
a^{q-1} +3b =0\\
-3(1 -4(ba^{-1})^{q+1}) \text{ is a square in } \Fq^*.
\end{cases}\]
\end{theorem}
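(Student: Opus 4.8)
The plan is to feed the cubic $P(x) = x^3 + b^{-1}x + ab^{-1}$ into Lemma~\ref{lem:bartoli} and then read off the answer from our classification. In the notation of Section~\ref{sec:cubic} this $P$ has $\delta = 0$, $\gamma = -b^{-1} \ne 0$ and $\theta = -ab^{-1} \ne 0$, and by Lemma~\ref{lem:bartoli} the polynomial $f_{a,b}$ permutes $\Fqt$ precisely when $H_P$ has no zeroes in $Z$. The natural dichotomy is according to whether $a^{q-1} + 3b = 0$; I would first observe that this equation holds exactly when $P$ lies in the family $P_{0,\alpha}$ with $\alpha = 3a$, since matching $P_{0,\alpha}(x) = x^3 - 3\alpha^{1-q}x - \alpha^{2-q}$ against $x^3 + b^{-1}x + ab^{-1}$ forces $\alpha = 3a$ and then, using $3^{q-1} = 1$ in $\Fq$, the relation $\alpha^{q-1} = -3b$ becomes $a^{q-1} + 3b = 0$.

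In the case $a^{q-1} + 3b = 0$, I would use the portion of the proof of Theorem~\ref{thm:character} dealing with the $P_{0,\alpha}$ family. The identity $H_{P_{0,\alpha}}(z,w) = \mu(zw + az + bw + d)(zw + bz + aw + d)$ derived there is a polynomial identity, so it holds whether or not $P_{0,\alpha}$ is irreducible, and the analysis via Theorem~\ref{thm:HPfacC1} shows that $H_{P_{0,\alpha}}$ has no zeroes in $Z$ if and only if $(4 - \alpha^{q+1})/(3\alpha^{q+1})$ is a nonzero square in $\Fq$. Finally, using $3^{q+1} = 9$ and reducing exponents modulo $q - 1$ and $q^2 - 1$ gives $(ba^{-1})^{q+1} = (9a^{q+1})^{-1}$, whence $(4 - \alpha^{q+1})/(3\alpha^{q+1})$ equals $\tfrac19$ times $-3(1 - 4(ba^{-1})^{q+1})$; since $\tfrac19$ is a square, the two square conditions agree, and this case reproduces (PP2).

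In the case $a^{q-1} + 3b \ne 0$, the cubic $P$ is not of the form $B_\theta$ (which needs $\gamma = 0$), nor $P_{0,\alpha}$, nor $Q_{0,\gamma}$ (which has $\theta = 0$, forcing $a = 0$); so if $P$ were irreducible with $H_P$ reducible, Theorem~\ref{thm:character} would be contradicted, while if $H_P$ were absolutely irreducible, Lemma~\ref{lem:isreducible} (whose proof uses only the curve $H_P$, not irreducibility of $P$) would supply a zero in $Z$. Hence here $f_{a,b}$ can permute $\Fqt$ only when $P$ is reducible, and I would treat this directly: writing $\rho \in \Fqt$ for a root, so that $a = -\rho - b\rho^3$ and $P(x) = (x - \rho)(x^2 + \rho x + \rho^2 + b^{-1})$, substitute into the expansion of $H_P$ recalled at the start of Section~\ref{sec:cubic} and follow the method of Lemma~\ref{lem:wquadratic} and Theorem~\ref{thm:HPfacC1} to reduce the existence of a point of $Z$ on $H_P$ to a single quadratic equation over $\Fqt$. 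Re-expressing its coefficients and discriminant in terms of $a$ and $b$ should yield exactly the two requirements of (PP1): the vanishing $a^qb^q = a(b^{q+1} - a^{q+1})$ together with the requirement that $1 - 4(ba^{-1})^{q+1}$ be a square in $\Fq^*$.

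I expect this last, reducible, case to be the main obstacle, since it lies outside the scope of Theorem~\ref{thm:character} and needs its own factorization analysis of $H_P$; in addition, some care will be needed at the degenerate configurations --- the subcases $\theta^{q+1} = 1$ (equivalently $a^{q+1} = b^{q+1}$) and $\alpha^{q+1} = 4$, the cases where $P$ has a repeated root, and the handling of small $q$ --- and in checking that the two cases $a^{q-1} + 3b = 0$ and $a^{q-1} + 3b \ne 0$ together capture exactly the pairs $(a,b)$ for which $f_{a,b}$ is a permutation polynomial, so that the result is precisely the stated equivalence with (PP1) or (PP2).
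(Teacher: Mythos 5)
You should first be aware that the paper contains no proof of this statement: it is quoted from \cite{bartolitimpanella} as an external result, and Section 7.1 only checks its \emph{consistency} with Theorem \ref{thm:character} under the standing assumption that $P(x)=x^3+b^{-1}x+ab^{-1}$ is irreducible (the paper explicitly notes that irreducibility is not needed for $f_{a,b}$ to be a permutation polynomial, but is needed for the spread application). So you are attempting a new derivation. The half of it corresponding to (PP2) is essentially sound: when $a^{q-1}+3b=0$ you correctly identify $P=P_{0,3a}$, the bilinear factorization of $H_{P_{0,\alpha}}$ is indeed a coefficient identity valid without irreducibility, $H_{P_{0,\alpha}}\not\equiv 0$ since $\delta=0$, and Theorem \ref{thm:HPfacC1} (which is a statement about the factor alone, not about $P$) gives the square condition, which matches (PP2) exactly as in Section 7.1. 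The flagged subcase $\alpha^{q+1}=4$ (where $ab=cd$ and Theorem \ref{thm:HPfacC1} does not apply) does work out — there $a=b=\alpha/2$, $d=ab$, so $H_P=\mu(z+\alpha/2)^2(w+\alpha/2)^2$ with $(-\alpha/2)^{q+1}=1$, giving points in $Z$ — and likewise (PP1) cannot hold together with $a^{q-1}+3b=0$, since its first equation then forces $\alpha^{q+1}=4$ and its second fails; but you assert neither of these checks.

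The genuine gap is the (PP1) case, which is precisely the part of the Bartoli--Timpanella theorem lying outside this paper's machinery. Your entire treatment is ``substitute \dots and follow the method of Lemma \ref{lem:wquadratic} and Theorem \ref{thm:HPfacC1} \dots should yield exactly the two requirements of (PP1).'' But for reducible $P$ none of Lemma \ref{lem:HPfactor}, Theorem \ref{thm:character}, or the setup of Theorem \ref{thm:HPfacC1} is available: their proofs rest on irreducibility of $P$ (e.g.\ via Lemma \ref{P divides G} and Lemma \ref{lem:a=b factor}), so you have no factorization of $H_P$ into the two bilinear factors on which the ``single quadratic'' analysis depends. Determining the $Z$-points of the full bidegree-$(2,2)$ symmetric polynomial $H_P$ for a reducible cubic is a separate and substantial computation (it is the bulk of the proof in \cite{bartolitimpanella}), and you give no argument that it reduces to one quadratic over $\Fqt$, no derivation that its solvability condition is exactly the equation $a^qb^q=a(b^{q+1}-a^{q+1})$ together with the square condition of (PP1), and no converse direction showing that (PP1) forces the absence of $Z$-points. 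As it stands, the proposal proves the (PP2) half and correctly reduces the complementary case to ``$P$ reducible'' (using Lemma \ref{lem:isreducible} and Lemma \ref{lem:bartoli}, the latter legitimately since it rests on the curve criterion of \cite{bartolitimpanella} rather than on the Main Theorem itself), but the (PP1) half is a plan rather than a proof.
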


We now compare the characterization of permutation polynomials of the form $f_{a,b}(X)$ with our characterization of polynomials satisfying Condition (1). Note that $P(x) = x^3 +b^{-1}x +ab^{-1}$ cannot be of the form $B_\theta(x)$ nor $Q_{\delta,\alpha}(x)$. Hence if $P(x)$ is irreducible and satisfies (PP1) or (PP2), then it must be of the form $P_{\delta,\alpha}(x)$. Thus we must have $\delta=0$, $a = \alpha/3$ and $b = -\alpha^{q-1}/3$.

With these parameters, Condition (PP1) becomes
\[\begin{cases}
-\alpha/9 = \alpha(1-\alpha^{q+1})/27\\
1 -4\alpha^{-(q+1)} \text{ is a square in } \Fq^*.
\end{cases}\]
The equality holds if and only if $\alpha^{q+1} =4$, in which case $P_{\delta,\alpha}(x)$ is reducible, contradicting our assumptions. Hence any polynomial satisfying (PP1) must be reducible.

Under the same criteria, Condition (PP2) is now
\[\tag{PP2}\begin{cases}
0=0\\
-3(1 -4\alpha^{-(q+1)}) \text{ is a square in } \Fq^*.
\end{cases}\]
Since $\delta = 0$ and $-3(1 -4\alpha^{-(q+1)})$ is a square in $\Fq^*$ if and only if $-(1-4\alpha^{-(q+1)})/3 = \frac{4-\alpha^{q+1}}{3\alpha^{q+1}}$ is a square in $\Fq^*$, Condition (PP2) agrees with the conditions in Theorem \ref{thm:character} for an irreducible polynomial of the form $P_{\delta,\alpha}(x)$ to satisfy Condition (1).

\subsection{Results of Feng-Lu}

Recall that in \cite{FL}, the polynomials
\[
g_{3,\rho}(x) = x^3 -3x+(\rho+\rho^q),
\]
were shown to be irreducible and satisfy Condition (1) when $\rho$ has order $q+1$. Such a polynomial lies in $\Fq[x]$. We now show that our classification contains examples not equivalent to any of those constructed in \cite{FL}.

\begin{lemma}
Every polynomial of the form $g_{3,\rho}(x)$ is equivalent to one of the form $P_{\delta,1}(x)$. 
    Not every irreducible polynomial of the form $P_{\delta,1}(x)$ is equivalent to one of the form $g_{3,\rho}(x)$.
\end{lemma}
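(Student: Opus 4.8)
The plan is to treat the two assertions separately. The first is immediate from the classification already obtained: for $q\equiv 2\bmod 3$ and $\rho\in\Fqt^*$ of order $q+1$, the polynomial $g_{3,\rho}(x)$ is (as recalled in this section) an irreducible cubic in $\Fqt[x]$ satisfying Condition (1), so Corollary \ref{cor:allequiv} shows at once that it is equivalent to some $P_{\delta,1}(x)$. When $q\equiv 1\bmod 3$ there are no polynomials of the form $g_{3,\rho}$, so there is nothing to prove there; the content of the lemma lies in the case $q\equiv 2\bmod 3$, which is what I would address below.

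For the second assertion I would argue by counting. First I would count the distinct polynomials of the form $g_{3,\rho}(x)$. Since $\rho$ has order $q+1$ we have $\rho^q=\rho^{-1}$, so $g_{3,\rho}(x)=x^3-3x+(\rho+\rho^{-1})$; as the order of $\rho$ exceeds $2$ we have $\rho\neq\rho^{-1}$, and $\rho+\rho^{-1}=\rho'+\rho'^{-1}$ forces $\rho'\in\{\rho,\rho^{-1}\}$. Thus the assignment $\rho\mapsto g_{3,\rho}$ is exactly \emph{two-to-one} on the $\varphi(q+1)$ elements of order $q+1$ in $\Fqt^*$, so there are precisely $\varphi(q+1)/2$ distinct polynomials in this family. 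Since each such polynomial lies in a single equivalence class, the family $\{g_{3,\rho}\}$ meets at most $\varphi(q+1)/2$ of the equivalence classes of irreducible cubics satisfying Condition (1).

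Next I would compare this bound with the total number $\tfrac{q+1}{3}$ of such equivalence classes (Corollary \ref{cor:allequiv}, case $q\equiv 2\bmod 3$), recalling that each of these classes contains a representative of the form $P_{\delta,1}(x)$. Since $p>3$ the field order $q$ is odd, and $q\equiv 2\bmod 3$ gives $3\mid q+1$; hence $6\mid q+1$ and $\varphi(q+1)\le(1-\tfrac12)(1-\tfrac13)(q+1)=\tfrac{q+1}{3}$, so $\varphi(q+1)/2\le\tfrac{q+1}{6}<\tfrac{q+1}{3}$. Therefore some equivalence class contains no $g_{3,\rho}$, and the polynomial $P_{\delta,1}(x)$ lying in that class is not equivalent to any $g_{3,\rho}(x)$, which is the required conclusion.

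I do not expect a serious obstacle here; the single point that genuinely needs care is the two-to-one count. The cruder bound of $\varphi(q+1)$ distinct polynomials would only give $\le\tfrac{q+1}{3}$, which is not strictly smaller than the number of equivalence classes, so one really must exploit that $\rho$ and $\rho^{-1}$ yield the same polynomial and that $\rho\neq\rho^{-1}$. Everything else is bookkeeping with the enumerations of Corollaries \ref{cor:countall} and \ref{cor:allequiv}.
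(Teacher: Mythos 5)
Your proof is correct, and its overall strategy is the one the paper uses: the first claim follows from Corollary \ref{cor:allequiv} applied to $g_{3,\rho}$ (which the paper makes explicit by noting $g_{3,\rho}(x)=P_{0,-(\rho+\rho^q)}(x)$, since $\rho+\rho^q\in\Fq$), and the second claim follows by showing the $g_{3,\rho}$ family meets strictly fewer than $\tfrac{q+1}{3}$ of the equivalence classes counted in Corollary \ref{cor:allequiv}. The only real difference is in how that upper bound is obtained. The paper uses $g_{3,\rho}=g_{3,\rho^q}$ together with the explicit equivalence $g_{3,\rho}\sim g_{3,-\rho}$ via $\phi_{u,0}$ with $u^{q-1}=-1$, giving at most $\tfrac{q+1}{4}$ classes met; you instead count the distinct polynomials directly, observing that $\rho\mapsto g_{3,\rho}$ is exactly two-to-one on the $\varphi(q+1)$ elements of order $q+1$, and then use $6\mid q+1$ to get $\varphi(q+1)/2\le\tfrac{q+1}{6}<\tfrac{q+1}{3}$. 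Your bound is in fact sharper and needs no group element at all, whereas the paper's route records the additional structural fact that the family collapses further under equivalence ($g_{3,\rho}\sim g_{3,-\rho}$); either suffices for the lemma. Your handling of $q\equiv 1\bmod 3$ (where the family is empty) matches the paper's implicit treatment, so there is no gap there.
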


\begin{proof}
It is immediate to verify that $g_{3,\rho}(x) = x^3 -3x+(\rho+\rho^q) = P_{0,-(\rho+\rho^q)}(x)$. From Corollary \ref{cor:allequiv}, this is equivalent to some $P_{\delta,1}(x)$, proving the first claim.

It is straightforward to see that $g_{3,\rho}(x)=g_{3,\rho^q}(x)$, and that $g_{3,\rho}(x)$ and $g_{3,-\rho}(x)$ are equivalent via $\phi_{u,0}$ with $u^{q-1}=-1$. Hence the number of equivalence classes of polynomials of the form $g_{3,\rho}(x)$ is at most $\frac{q+1}{4}$, and by Corollary \ref{cor:allequiv}, the second claim holds.
\end{proof}

\subsection{Conclusion}
In this paper we have fully characterised and classified cyclic $2$-spreads in $V(6,q)$ up to equivalence, and hence classified a class of flag-transitive linear spaces with assumed automorphism group. The classification includes new examples. 

\bibliographystyle{abbrv}
\bibliography{cycliclinespreads}
\end{document}